\newtheorem{thm}{Theorem}
\newtheorem{lemma}{Lemma}
\newtheorem{corollary}{Corollary}				
\newtheorem{proposition}{Proposition}
\theoremstyle{definition}
\newtheorem{definition}{Definition}
\newtheorem{assumption}{Assumption}
\newtheorem{remark}{Remark}
\def\E{{\mathbb E}}
\def\P{{\mathbb P}}
\def\bR{{\mathbb R}}
\def\P{{\mathbb P}}
\def\cF {\mathcal{F}}
\def\cC {\mathcal{C}}
\def\cD {\mathcal{D}}
\def\cS {\mathcal{S}}
\def\cN {\mathcal{N}}
\def\cB{\mathcal{B}}
\def\mCov {\mbox{Cov}}
\title{Fluctuations for Spatially Extended Hawkes Processes}
\author[1]{Julien Chevallier \thanks{ julien.chevallier1@univ-grenoble-alpes.fr}} 
\author[2]{Guilherme Ost \thanks{guilhermeost@im.ufrj.br}}
\affil[1]{Univ. Grenoble Alpes, CNRS, Grenoble INP\footnote{Institute of Engineering Univ. Grenoble Alpes}, LJK, 38000 Grenoble, France}
\affil[2]{Universidade Federal do Rio de Janeiro, Rio de Janeiro, Brazil}
\date{\today}
\begin{document}

\maketitle

\begin{abstract}
In a previous paper \cite{chevallier2018spatial}, it has been shown that the mean-field limit of spatially extended Hawkes processes is characterized as the unique solution $u(t,x)$ of a neural field equation (NFE). The value $u(t,x)$ represents the membrane potential at time $t$ of a typical neuron located in position $x$, embedded in an infinite network of neurons.
In the present paper, we complement this result by studying the fluctuations of such a stochastic system around its mean field limit $u(t,x)$. Our first main result is a central limit theorem stating that the spatial distribution associated to these fluctuations converges to the unique solution of some stochastic differential equation driven by a Gaussian noise. In our second main result we show that the solutions of this stochastic differential equation can be well approximated by a stochastic version of the neural field equation satisfied by $u(t,x)$. To the best of our knowledge, this result appears to be new in the literature.
\end{abstract}

{\it Keywords:} Hawkes Processes, Central Limit Theorem, Neural Field Equations, network of neurons

\section{Introduction}

We consider multivariate point processes $(N^1,\ldots, N^n)$ on $[0,\infty)$ representing the time occurrences of action potentials (often called \emph{spikes}) of a network of $n$ neurons. We assume that the intensity process of $N^i$ is of the form
\begin{equation}
\label{def:SEHP_intensity}
\begin{cases}
\lambda^i_t=f\left(U^i_{t-}\right),\\
U^i_t=e^{-\alpha t}u_0(x_i)+\frac{1}{n}\sum_{j=1}^nw(x_j,x_i)\int_{0}^{t}e^{-\alpha(t-s)}dN^j_s.
\end{cases}
\end{equation}
In the above formula, $U^i_{t}$ describes the membrane potential of neuron $i$ at time $t\geq 0$ and $x_i=i/n$ represents the position of neuron $i$ in the network. The function $f:\bR\to\bR_+$ is the \emph{firing rate} of each neuron. The function $w:[0,1]\times[0,1]\to\bR$ is the matrix of \emph{synaptic strengths}. It introduces a spatial structure in the model; the value $w(x_j,x_i)$ models the influence of a spike of neuron $j$ on neuron $i$, as a function of their positions. On the one hand, when the sign of $w(x_j,x_i)$ is positive, neuron $j$ excites neuron $i$. On the other hand, if the sign of $w(x_j,x_i)$ is negative, neuron $j$ inhibits neuron $i$.  
The \emph{leakage rate} is modeled by the parameter $\alpha\geq 0.$  The function 
$u_0:[0,1]\to\bR$ describes the membrane potential of all neurons in the network at time $t=0$. We refer to $f,w,u_0$ and $\alpha$ as parameters of the multivariate point process $(N^1,\ldots, N^n).$

Such point processes are known as nonlinear {\it Hawkes Processes}, named after the pioneer work of A. G. Hawkes \cite{hawkes_1971} where the model has been introduced in the linear case (i.e., for $f$ linear). Their defining characteristic is that past events (spikes in our framework) can affect the
probability of future events to occur. 
The literature of neuronal modeling via Hawkes processes is vast. To cite just a few articles, see for instance \cite{Witten_et_al:17, chevallier2015microscopic,Chornoboy:88, SusanneEva:17, hansen2015lasso, hodara_locherbach:17, Johnson:96, Pernice:11, Bouret:14} and the references therein.

Recently, in \cite{chevallier2018spatial}, the authors have established a connection between solutions of (scalar) {\it neural field equations}(NFE) and mean field limits of nonlinear Hawkes processes. 
Specifically, it has been proved that the multivariate process $(U^1_t,\ldots, U^n_t)_t$ defined in \eqref{def:SEHP_intensity} converges as $n\to\infty$, under some assumptions on the parameters of the model,  to a deterministic
function $u(t,x)$ which solves the neural field equation:
\begin{equation}
\label{def:neural_field_equation}
\begin{cases}
\dfrac{\partial}{\partial t}u(t,x)=-\alpha u(t,x)+ \int_{0}^{1} w(y,x)f(u(t,y))dy, \ t>0 \ \mbox{and} \ x\in [0,1],\\
u(0,x)= u_{0}(x).
\end{cases}
\end{equation}
Here, $u(t,x)$ represents the membrane potential at time $t$ of a typical neuron located in position $x$, embedded in an infinite network of neurons.
Neural field equations have been widely studied in the literature since the pioneer works of Wilson, Conwan \cite{wilson_Conwan:72, Wilson_Conwan:73} and Amari \cite{Amari:77} in the 1970s. Such models have attracted a great interest from the scientific community, due to its wide range of applications and mathematical tractability;  see \cite{bressloff2011spatiotemporal} for a recent and comprehensive review. 

The goal of the present paper is to complement the results in \cite{chevallier2018spatial} by describing the fluctuations of the process $(U^1_t,\ldots, U^n_t)_t$ around its mean field limit $u(t,x).$ More precisely, by writing $\eta^i_t=n^{1/2}(U^i_t-u(t,x_i))$ to denote the \emph{individual fluctuations}, the purpose of this paper is to study the convergence of the sequence of stochastic processes $(\Gamma^n_t)_t$ as $n\to\infty$, where $\Gamma^n_t$ is the random signed measure on $\cS'$ (representing the \emph{spatial fluctuations}) defined as
\begin{equation}
\label{def:gamma_n_t}
\Gamma^n_t(dx)=\frac{1}{n}\sum_{i=1}^n\eta^i_t\delta_{x_i}(dx).
\end{equation}
Here, the set $\cS'$ denotes the dual space of the Fréchet space $S=\cC^{\infty}([0,1])$, the space of all real-valued functions on $[0,1]$ with continuous derivatives of all orders.  
Fix $T\geq 0$, denote $\Gamma^n=(\Gamma^n_t)_{0\leq t\leq T}$ and observe that $\Gamma^n\in \cD([0,T],\cS'),$
the space of càdlàg functions from $[0,T]$ to $\cS'$.
Our first main result, namely Theorem \ref{thm:main_result}, is a Central Limit Theorem saying that under some assumptions on the parameters of the model, the sequence of processes $(\Gamma^n)_{n\geq 1}$ converges in law to a limit process $\Gamma=(\Gamma_t)_{0\leq t\leq T}$ as $n\to\infty$. Moreover, the limit process $\Gamma$ belongs to $\cC([0,T],\cS')$, the set of continuous functions from $[0,T]$ to $\cS'$, and for each $t\geq 0$, the measure $\Gamma_t\in \cS'$ is characterized by the following identity: for all $\varphi\in \mathcal{S}$,
\begin{equation}
\label{def:limit_process}    
\Gamma_t(\varphi)=e^{-\alpha t}M_t(\varphi)+\int_{0}^{t}e^{-\alpha(t-s)}\Gamma_s\left(\int_{0}^1\varphi(x)w(\cdot,x)f'(u(s,\cdot))dx\right)ds,
\end{equation}
where $M=(M_t)_{t\geq 0}$ is a continuous centered Gaussian process taking values in $\cS'$ with covariance function given, for all $t_1,t_2\geq 0$ and $\varphi_1,\varphi_2\in \cS$, by
\begin{equation}
\label{eq:covariance:of:M}
    \begin{cases}
    \E(M_{t_1}(\varphi_1)M_{t_2}(\varphi_2))=\int_0^{t_1\wedge t_2} \int_{0}^1 e^{2\alpha s} I[\varphi_1](y) I[\varphi_2](y)     f(u(s,y)) dy ds,\\
    I[\varphi](y)=\int_{0}^1\varphi(x)w(y,x)dx, \quad y\in [0,1],
    \end{cases}
\end{equation}
and $u(t,x)$ is the solution of \eqref{def:neural_field_equation}. The interested reader is referred to \cite[$\Phi'$-Wiener processes]{kallianpur2013stochastic} for details on such Gaussian processes. 

Let us give some intuition about Equation \eqref{def:limit_process}. The first term in the RHS of \eqref{def:limit_process}, namely $e^{-\alpha t}M_t(\varphi)$, comes from the error one makes when replacing the point measure $dN^i_t$ by the intensity measure $f(U^i_t)dt$. It is the diffusion approximation for point processes: formally taking $\varphi_1=\varphi_2=\delta_x$ the Dirac mass at position $x$, one obtains in Equation \eqref{eq:covariance:of:M} the product $w(y,x)^2 f(u(s,y))$ which is the limit variance of the jumps induced by spiking neurons in position $y$ onto neurons in position $x$, at time $s$. 
The second term in the RHS of \eqref{def:limit_process} comes from the error one makes when replacing the intensity $f(U^i_t)$ by the limit one $f(u(t,x_i))$: the linearization of $f$ gives the product of the derivative $f'$ times the difference between $U^i_t$ and $u(t,x_i)$ (which is encapsulated in $\eta^i_t$ and so in the spatial fluctuation $\Gamma^n_t$). 

The study of the fluctuations is a natural follow-up to the study of the mean-field limits for interacting particle systems (see for instance \cite{Braun1977vlasov,chevallier2017fluctuations,DAWSON1991law,delarue2019masterequation,HITSUDA1986tightness,kurtz2004stochastic,luccon2016transition,Meleard_96,tran2006modeles}). 
These results are not only interesting per se, they are also relevant from an applied point of view. Indeed, in the mean-field limit, typically one can show that the so-called {\it propagation of chaos property} holds, meaning that evolution of any finite number of particles (the neurons in our framework) become independent (see for instance \cite{Baladron:12,bossy2015clarification}). In other terms, mean field limits neglect the correlations between particles which are present in finite (but large) systems.
In contrast, the correlations do appear in the fluctuations, in particular in the covariance kernel \eqref{eq:covariance:of:M}.

With slight abuse of terminology, the mean field limit $u_t=u(t,\cdot)$, which can be seen as an element of $\mathcal{S}'$ given by $u_t(\varphi)=\int_0^1 \varphi(x)u(t,x)dx$, can be thought of as a \emph{zeroth-order approximation} of the finite size system $(U^1_t,\ldots, U^n_t)_t$.
In that respect, we say that the following process with values in $\mathcal{S}'$,
\begin{equation}\label{eq:def:second:order:approx}
    (u_t + n^{-1/2}\Gamma_t)_t,
\end{equation}
is a \emph{first-order approximation} of the finite size system, this last definition being justified by our Central Limit Theorem. In addition to the Central Limit Theorem, we also investigate here the link between the first-order approximation and the solution of the following stochastic neural field equation
\begin{equation}
\label{eq:snfe:intro}
\begin{cases}
dV^n_t(x)=\left( -\alpha V^n_t(x)+ \int_{0}^1 w(y,x) f(V^n_t(y))dy \right) dt + \int_{0}^1 w(y,x) \frac{\sqrt{f(V^n_t(y))}}{\sqrt{n}} W(dt,dy),\\
V^n_0(x)=u_0(x),
\end{cases}
\end{equation}
where $W$ is a {\it Gaussian white noise} on $\bR_+\times [0,1]$. 
Loosely speaking, in our second main result, namely Theorem \ref{thm:approximation:snfe}, we show that the process $(u_t + n^{-1/2}\Gamma_t)_t$ is an ``almost'' solution of \eqref{eq:snfe:intro}. 
To the best of our knowledge, this result appears to be new in the literature
and is of independent interest. To some extent, the solutions of \eqref{eq:snfe:intro} can be interpreted as an intermediate modeling scale, sometimes called \emph{mesoscopic scale}, between the microscopic scale given by Hawkes process   \eqref{def:SEHP_intensity} and the macroscopic scale one given by neural field equation \eqref{def:neural_field_equation}.
In order to give sense to solutions of \eqref{eq:snfe:intro} we follow the approach developed by Walsh (see for instance \cite{faugeras2015stochastic}, \cite{Dalang2009AMO} and the seminal lecture notes \cite{walsh1986introduction}). Some heuristics arguments leading to the stochastic neural field equation \eqref{eq:snfe:intro} are provided in Section \ref{sec:heuristics}.
Let us mention the article \cite{chevallier2017fluctuations} which discusses similar results in a non rigorous way in the context of non linear stochastic partial differential equations.

The literature devoted to mean-field limits is usually concerned with the convergence of an empirical measure towards a probability measure which is characterized as the solution of some partial differential equation. It is worth mentioning that it is not the case here: the mean-field equation \eqref{def:neural_field_equation} is not satisfied by a probability density of the potential but by the value of the potential itself. This difference makes the study of \eqref{eq:snfe:intro} simpler: the square root term, namely $\sqrt{f(V_t(y))}$, is trivially well-defined which is not the case when the mean field limit concerns an empirical measure (see \cite{chevallier2017fluctuations} for instance).

The results of the present paper are stated in the distribution space $\cS'$ so the parameters of the model ($f$, $w$ and $u_0$) are assumed to be smooth. 
Concerning the rate function $f$, we also assume that its first and second derivatives are bounded (in particular, $f$ is Lipschitz) and that it is lower-bounded by a positive constant (only in the last section). No additional assumptions on the model are needed and, in particular, the function $f$ could be unbounded.

The present paper is organized as follows. In Section \ref{Sec:2}, the notation used throughout the paper is introduced, the model is described and our first main result, Theorem \ref{thm:main_result}, is stated. In Section \ref{sec:solutions_NFE}, some regularity properties of solutions of the neural field equation are derived.
Uniform estimates on the second moment of the individual fluctuations (used all along the paper) are provided in Section \ref{sec:estimates}. Section \ref{sec:tightness} is devoted to the proof of the tightness of the sequence $(\Gamma^n)_n$ defined in \eqref{def:gamma_n_t}. In Section \ref{sec:limit_equation}, we show that the limit of any converging sub-sequence of $(\Gamma^n)_n$ solves the limit equation \eqref{def:limit_process}. 
In Section \ref{sec:convergence}, the uniqueness of solutions of the limit equation \eqref{def:limit_process} is proved which concludes the proof of the Central Limit Theorem (Theorem \ref{thm:main_result}).
In Section \ref{sec:conec_stoc_NFE}, we first develop the mathematical framework required to study the stochastic neural field equation \eqref{eq:snfe:intro} and then we prove our second main result, Theorem \ref{thm:approximation:snfe}, which makes the link between the first-order approximation \eqref{eq:def:second:order:approx} and the stochastic neural field equation \eqref{eq:snfe:intro}.
Some technical results used in the previous sections are collected in the \ref{App:lemmas}. We include in \ref{app:frechet} some basic definitions about Fréchet spaces.

\section{General notation, model definition and the central limit theorem}
\label{Sec:2}

\subsection{General notation}
\label{subsec:general_notation}
Let $E$ and $F$ be some metric spaces. The space of continuous (respectively càdlàg) functions from $E$ to $F$ is denoted by $\cC(E,F)$ (resp. $\cD(E,F)$). When $F=\bR$, we write $\cC(E)$ (resp. $\cD(E)$) instead of $\cC(E,\bR)$ (resp. $\cD(E,\bR)$). For each integer $n\geq 1,$ let $[n]=\{1,\ldots,n\}$. We write $\cC^{\infty}([0,1])$ (resp. $\cC^{\infty}(\bR)$) to denote the set of all functions $\varphi:[0,1]\to \bR$ (resp. $\varphi:\bR\to \bR$ ) with continuous derivatives of all orders. Similarly, we write $\cC^{\infty}([0,1]\times [0,1])$ to denote the set of all functions $\psi:[0,1]\times [0,1]\to \bR$ with continuous partial derivatives of all orders.   
To ease the notation, the partial derivatives with respect to the first and second variable of a differentiable function $\psi:[0,1]\times [0,1]$ are respectively denoted by $\partial_1 \psi$ and $\partial_2 \psi$. 
Throughout the paper, the letter $C$ denotes a positive constant. Most of the time, the dependence of $C$ with respect to the parameters of the model is specified.

We equip the space $\mathcal{C}^0([0,1])=\mathcal{C}([0,1])$ with the sup norm $$||f||_0=\sup_{x\in [0,1]} |f(x)|.$$
The space $\mathcal{C}^k([0,1])$ of functions with continuous derivatives up to order $k$ is equipped with the norm
\begin{equation}
    ||f||_k = \sum_{i=0}^k ||f^{(i)}||_0,
\end{equation}
where $f^{(0)}=f$ and $f^{(i)}$ denotes the $i$-th derivative of $f$ for $i\in [k]$. 
The space $\mathcal{S}=\cC^\infty([0,1])$ is a Fréchet space \cite{simon2017banach} with the filtering family of semi-norms $(||f||_k)_{k\geq 0}$. Hence it is equipped with the metric $d_\cS$ defined for all $f,g$ in $\mathcal{S}$ by,  
\begin{equation}\label{eq:def:distance:S}
    d_\cS(f,g) := \sum_{k\geq 0} 2^{-k} \frac{|| f - g ||_k}{1 + || f - g ||_k}.
\end{equation}
For a reader not familiar with these notions, some details about Fréchet spaces are gathered in \ref{app:frechet}.

Let $N$ be a point process in $[0,\infty)$, defined on  a filtered probability space $(\Omega, \cF, (\cF_t)_{t\geq 0}, \P)$. 
We say that $N$ is {\it locally finite} if for all $t\geq 0$, the random variable $N_t=N((0,t])$ counting the number of points of $N$ in the interval $(0,t]$ is finite almost surely.
We say that the $(\cF_t)_{t\geq 0}$-predictable process $(\lambda_t)_{t\geq 0}$ is the intensity process of $N$ if the process $(N_t-\int_{0}^t\lambda_s ds)_{t\geq 0}$ is a $(\cF_t)_{t\geq 0}$-local martingale. For bounded measurable functions $g:[0,\infty)\to \bR$ and a locally finite point process $N$, we define $\int_{0}^{t}g(s)dN_s=\sum_{s\in N\cap (0,t]}g(s)$ for any $t>0.$

For any locally square integrable martingale $(M_t)_{t\geq 0}$, the Doob-Meyer decomposition gives rise to the angle bracket, usually denoted by $(\langle M \rangle_t)_{t\geq 0}$, which is the unique non-decreasing predictable process such that $\langle M \rangle_0=0$ and $(M_t^2- \langle M \rangle_t)_{t\geq 0}$ is local martingale.

\subsection{Model definition and the central limit theorem}

Throughout the paper we work on a filtered probability space $(\Omega, \cF, (\cF_t)_{t\geq 0}, \P)$. 
 We assume that this filtered probability space is rich enough so that all the processes we shall consider may be defined on it. 
 We consider a nonlinear Hawkes process $(N^{1},\ldots N^{n})$ in $[0,\infty)$ representing the spiking activity of $n$ interacting neurons. We assume that neuron $i\in [n]$ is located at position $x_i=i/n.$ The dynamics of the Hawkes process $(N^1,\ldots, N^n)$ is described as follows.

\begin{definition}
\label{def:SEHP}
Let $f:\bR\to\bR_+$, $w:[0,1]\times [0,1]\to\bR$ and $u_0:[0,1]\to\bR$ be measurable functions and $\alpha\geq 0$ be a fixed parameter.  We say $(N^1,\ldots, N^n)$ is a Hawkes process with parameters $(f,w,u_0,\alpha)$ if
\begin{enumerate}
    \item $\P-$ almost surely, for all pairs $i,j\in [n]$ with $i\neq j$, the point processes $N^i$ and $N^j$ never jump simultaneously.
    \item For each $i\in [n]$, the intensity process $(\lambda^i_t)_{t\geq 0}$ of $N^i$ is given by $\lambda^i_t=f(U^i_{t-})$, where $U^i_{t}$ is defined by
\begin{equation}
\label{def:membrane_potential}
U^i_{t}=e^{-\alpha t}u_0(x_i)+\frac{1}{n}\sum_{j=1}^nw(x_j,x_i)\int_{0}^te^{-\alpha(t-s)}dN^j_s.    
\end{equation}
\end{enumerate}
\end{definition}

We shall work under the following assumption on the parameters $(f,w,u_0,\alpha)$ of the model. 

\begin{assumption}
\label{ass:f}
The function $f:\bR\to\bR_+$ belongs to $\cC^{\infty}(\bR)$.
Moreover, the first  and second derivatives of $f$ are both bounded, that is  $\|f'\|_0<\infty$ and $\|f^{(2)}\|_0<\infty$. 
Furthermore, the functions $u_0:[0,1]\to\bR$ and $w:[0,1]\times [0,1]\to \bR$ are both smooth, that is, $u_0\in \cC^{\infty}([0,1])$ and $w\in \cC^\infty([0,1]\times [0,1])$.
\end{assumption}
Note that under the assumption $\|f'\|_0<\infty$,  the function $f$ is Lipschitz continuous. 

\begin{remark}
Here we briefly discuss some examples of functions $f$, $w$ and $u_0$ satisfying Assumption \ref{ass:f}. They are widely used in the literature (see the reviews \cite{bressloff2011spatiotemporal,ermentrout1998neural} for instance).
\begin{itemize}
    \item \textbf{firing rate $f$:} the sigmoid rate $f(u)=f_0/(1+e^{-(u-\kappa)})$ and the gaussian rate $f(u)=f_0(1+\operatorname{erf}(u-\kappa))/2$, with $\operatorname{erf}(x)=(2/\sqrt{\pi})\int_0^x e^{-t^2/2}dt$, where $\kappa\in \mathbb{R}$ can be thought as a threshold and $f_0>0$ a maximal firing capacity of the neurons; 
    \item \textbf{synaptic strength $w$:} the standard form is $w(x,y)=w(|x-y|)$. In that framework, the gaussian $w(x)=e^{-x^2}$, the exponential $w(x)=e^{-x}$ and the mexican hat function are widely used. The latter writes as the difference of two gaussians or two exponentials: for instance $w(x)=e^{-x^2} - Ae^{-x^2/\sigma}$ with $A<1$ and $\sigma>1$ describes short range excitation and long range inhibition;
    \item \textbf{initial condition $u_0$:} a constant function or a smooth interpolation between $u_0=0$ and $u_0=a>0$.
\end{itemize}
\end{remark}

\begin{remark}
Throughout the paper we work with smooth functions. 
We do this partly in order to avoid some technicalities which make our proofs less transparent.
Following the approach adopted in \cite{chevallier2017fluctuations,Meleard_96,luccon2016transition} it is possible to state the central limit theorem in some Hilbert space and  weaken the assumptions to consider functions that are only twice differentiable with bounded derivatives. 
\end{remark}
\begin{remark}
Note that the positions $x_i$'s are regularly spaced in the compact set $[0,1].$ We stress that our results do not rely on this specific choice. They can be easily extended to the case in which the positions $x_i$ belong to a regular grid of some compact set $K\subset \bR^d$ for some integer $d\geq 1$, at the cost of more complicated notation.
\end{remark}

For each $t\geq 0$, let $U_t=(U^1_t,\ldots, U^n_t)$. Under some assumptions on the functions $f,w$ and $u_0$ (much weaker than those of Assumption \ref{ass:f}), 
it has been proved \cite[Corollary 2]{chevallier2018spatial} that the process $(U_t)_{t\geq 0}$ converges (in some sense) to the unique solution $u(t,x)$ of the scalar neural field equation \eqref{def:neural_field_equation}.

Recall that we write $\cS$ to denote $\cC^{\infty}([0,1])$ and $\cS'$ to denote its dual space.  
The main goal of this paper is to describe the fluctuations of $(U_t)_{t\geq 0}$ around its limit, the continuous deterministic solution $u(t,x)$ of the neural field equation \eqref{def:neural_field_equation}. For this reason, for each $i\in [n]$ and $t\geq 0$, we define the individual fluctuations
$\eta^i_t=n^{1/2}(U^i_t-u(t,x_i))$ 
and consider the random signed measures $\Gamma^n_t$ on $\cS'$ (representing the spatial fluctuations) defined as
$$
\Gamma^n_t(dx)=\frac{1}{n}\sum_{i=1}^n\eta^i_t\delta_{x_i}(dx).
$$
For some fixed $T>0$, denote $\Gamma^n=(\Gamma^n_t)_{0\leq t\leq T}$ and observe that $\Gamma^n\in \cD([0,T],\cS').$
Our first main result is the following.

\begin{thm}
\label{thm:main_result}
Under Assumption \ref{ass:f}, the sequence $(\Gamma^n)_{n\geq 1}$ converges in law in $\mathcal{D}([0,T], \mathcal{S}')$ to the unique solution $\Gamma=(\Gamma_t)_{0\leq t\leq T}\in \mathcal{C}([0,T], \mathcal{S}')$ of  equation \eqref{def:limit_process}.
\end{thm}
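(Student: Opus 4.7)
The plan is to follow the standard four-step strategy for central limit theorems of distribution-valued semimartingales: derive an explicit semimartingale decomposition for $\Gamma^n(\varphi)$, establish tightness of $(\Gamma^n)$ in $\mathcal{D}([0,T],\mathcal{S}')$, identify all accumulation points as solutions of \eqref{def:limit_process}, and prove uniqueness for that equation. This organization mirrors Sections \ref{sec:tightness}--\ref{sec:convergence}.

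The first step is the decomposition. Using Doob--Meyer, write $dN^j_s = f(U^j_{s-})ds + dM^j_s$ with the $M^j$ pairwise orthogonal (no common jumps), substitute into \eqref{def:membrane_potential}, subtract the integral form of \eqref{def:neural_field_equation}, and Taylor-expand $f(U^j_{s-}) - f(u(s,x_j))$ to first order. After swapping sums and multiplying by $n^{-1/2}\sum_i\varphi(x_i)$, one gets for every $\varphi\in\mathcal{S}$,
\begin{equation*}
\Gamma^n_t(\varphi) = e^{-\alpha t}\widetilde M^n_t(\varphi) + \int_0^t e^{-\alpha(t-s)}\,\Gamma^n_{s}\!\bigl(\widetilde I^n[\varphi]\,f'(u(s,\cdot))\bigr)\,ds + R^n_t(\varphi),
\end{equation*}
where $\widetilde I^n[\varphi](y)=\tfrac1n\sum_i\varphi(x_i)w(y,x_i)$ is a Riemann-sum approximation of $I[\varphi]$, the martingale is
\[
\widetilde M^n_t(\varphi)=\tfrac{1}{\sqrt{n}}\sum_j\widetilde I^n[\varphi](x_j)\int_0^t e^{\alpha s}\,dM^j_s,\qquad \langle\widetilde M^n(\varphi)\rangle_t=\tfrac1n\sum_j\widetilde I^n[\varphi](x_j)^2\!\int_0^t\!e^{2\alpha s}f(U^j_{s-})\,ds,
\]
and $R^n_t(\varphi)$ collects the Taylor remainder of order $\|f''\|_0\,(U^j-u(s,x_j))^2$ together with the $O(n^{-1})$ Riemann-sum error from replacing averages against $w$ by integrals.

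For tightness, since $\mathcal{S}$ is a nuclear Fréchet space, I would invoke a Mitoma-type criterion to reduce tightness of $(\Gamma^n)$ in $\mathcal{D}([0,T],\mathcal{S}')$ to tightness of $(\Gamma^n(\varphi))_n$ in $\mathcal{D}([0,T],\mathbb{R})$ for each $\varphi$, with constants uniform over balls $\{\varphi:\|\varphi\|_k\leq 1\}$ of some fixed semi-norm. For each $\varphi$ Aldous' criterion applies: BDG and the bound $\langle\widetilde M^n(\varphi)\rangle_t \leq C\|\varphi\|_0^2\|w\|_0^2\, t$ (using the uniform second-moment estimate on $\eta^i$ from Section \ref{sec:estimates}) handle the martingale part, while Gronwall handles the drift and remainder parts. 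Jumps of $\widetilde M^n(\varphi)$ being $O(n^{-1/2})$ ensures any subsequential limit lies in $\mathcal{C}([0,T],\mathcal{S}')$. To identify the limit along a convergent subsequence $\Gamma^{n_k}\Rightarrow\Gamma$, Skorokhod representation plus the smooth convergence $\widetilde I^n[\varphi]\to I[\varphi]$ in $\mathcal{S}$ (available since $w\in\mathcal{C}^\infty$) together with a continuous-mapping argument on $\Gamma\mapsto \int_0^t e^{-\alpha(t-s)}\Gamma_s(I[\varphi]f'(u(s,\cdot)))\,ds$ gives the limit of the drift; the second-moment bounds give $R^n(\varphi)\to 0$ in $L^1$; and the functional martingale CLT (Rebolledo--Jacod--Shiryaev) applied to $\widetilde M^n(\varphi)$, whose Lindeberg condition follows from $O(n^{-1/2})$ jumps and whose bracket converges in probability to $\int_0^t\int_0^1 e^{2\alpha s}I[\varphi](y)^2 f(u(s,y))\,dy\,ds$, yields convergence to a continuous Gaussian martingale matching \eqref{eq:covariance:of:M}.

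Finally, uniqueness of \eqref{def:limit_process} follows by Gronwall: for two solutions with the same driving noise $M$, the difference $\Delta_t=\Gamma_t-\Gamma'_t$ satisfies the linear equation $\Delta_t(\varphi)=\int_0^t e^{-\alpha(t-s)}\Delta_s(I[\varphi]f'(u(s,\cdot)))\,ds$; since $\varphi\mapsto I[\varphi]f'(u(s,\cdot))$ is continuous $\mathcal{S}\to\mathcal{S}$ under Assumption \ref{ass:f}, testing against a bounded set in some seminorm $\|\cdot\|_k$ and applying Gronwall forces $\Delta\equiv 0$. I expect the main obstacle to be the tightness step: $\mathcal{S}'$ has no simple norm, so all estimates must be uniform not only for fixed $\varphi$ but over balls in some seminorm of $\mathcal{S}$. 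This is precisely where the smoothness assumptions on $w$ and $u_0$ genuinely enter, allowing every dependence on $\varphi$ in the bounds to be absorbed into a finite $\|\varphi\|_k$.
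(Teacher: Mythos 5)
Your plan coincides with the paper's strategy in all essentials: semimartingale decomposition of $\Gamma^n(\varphi)$ into a martingale plus drift plus a vanishing remainder, tightness in $\mathcal{D}([0,T],\mathcal{S}')$ via Mitoma's criterion and Aldous, identification of subsequential limits through Rebolledo's martingale CLT for the bracket limit $\int_0^{t}\int_0^1 e^{2\alpha s}I[\varphi]^2 f(u)\,dy\,ds$ together with a continuous-mapping argument, and pathwise uniqueness by Gronwall using the uniform boundedness principle to dominate $|(\Gamma_s-\tilde\Gamma_s)(\varphi)|$ by a fixed seminorm $\|\varphi\|_k$. Two small points worth noting: Mitoma's theorem for the nuclear space $\mathcal{S}$ does not actually require any uniformity over balls (tightness of each scalar process $\Gamma^n(\varphi)$ suffices, and this is exactly what the paper uses), and since Assumption~\ref{ass:f} allows $f$ to be unbounded your claimed pathwise bound $\langle\widetilde M^n(\varphi)\rangle_t\leq C\|\varphi\|_0^2\|w\|_0^2 t$ does not hold as stated; the correct form is an expectation bound $\E\langle\widetilde M^n(\varphi)\rangle_t\leq C$ obtained from $\E[N^j_T]<\infty$ and the second-moment estimate on $\eta^i$, which is all Aldous needs.
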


The proof of Theorem \ref{thm:main_result} is divided in several steps.  
We first derive some regularity properties of solutions of the NFE \eqref{def:neural_field_equation} - see Proposition \ref{prop:contraction:NFE} (its proof is based mainly on results provided in \cite{chevallier2018spatial}). 
Next we  prove tightness of the sequence $(\Gamma^n)_{n\geq 1}$ in $\cD([0,T],\cS')$. To that end, we rely on \cite[Theorem 4.1]{Mitoma_89}, according to which the tightness of the sequence $(\Gamma^n)_{n\geq 1}$ in $\cD([0,T],\cS')$ follows from the tightness of the sequence $(\Gamma^n(\varphi))_{n\geq 1}$ in $\cD([0,T],\bR)$ for each $\varphi\in\cS$, where $\Gamma^n(\varphi)=(\Gamma^n_t(\varphi))_{0\leq t\leq T}$ and for each $0\leq t\leq T$,
\begin{equation*}
\Gamma^n_t(\varphi)=\frac{1}{n}\sum_{i=1}^n\eta^i_t\varphi(x_i).
\end{equation*}
To show the tightness of $(\Gamma^n(\varphi))_{n\geq 1}$ in $\cD([0,T],\bR)$, we first decompose $\Gamma^n_t(\varphi)$ as 
\begin{equation}
\Gamma^n_t(\varphi)=e^{-\alpha t}M^n_t(\varphi)+B^n_t(\varphi)+C^{n}_t(\varphi),    
\end{equation}
where $M^n(\varphi)=(M^n_t(\varphi))_{0\leq t\leq T}$ is a local martingale, $B^n(\varphi)=(B^n_t(\varphi))_{0\leq t\leq T}$ is a continuous stochastic process and $C^n(\varphi)=(C^{n}_t(\varphi))_{0\leq t\leq T}$ is a continuous function: all these quantities are carefully defined in Equation \eqref{def:A_B_C_phi}. We then show (see Proposition \ref{prop:tightness_of_A_B_M:in:R}) that the sequence of functions $(C^n(\varphi))_{n\geq 1}$ goes to $0$ and use {\it Aldous criterion} to show that both sequences  $(M^n(\varphi))_{n\geq 1}$ and $(B^n(\varphi))_{n\geq 1}$ are tight.  From that it is easy to conclude
the tightness of $(\Gamma^n(\varphi))_{n\geq 1}$ in $\cD([0,T],\bR)$ - see Corollary \ref{cor:tightness_of_Gamma_M_in_Sprime}.

Once established the tightness of the sequence $(\Gamma^n)_{n\geq 1}$, we show that its limit points belong to $\cC([0,T],\bR)$ and satisfy equation \eqref{def:limit_process} - see Proposition \ref{prop:continuity:limit:Gamma} and Theorem \ref{thm:limit:is:solution:of:limit:equation} respectively.
To conclude the proof of Theorem \ref{thm:main_result}, we then prove that solutions of equation \eqref{def:limit_process} are unique - see Theorem \ref{thm:CLT}.

\section{Solutions of the Neural Field Equation}
\label{sec:solutions_NFE}

The purpose of this section is to show regularity properties for the solution $u(t,x)$ of the NFE involved in the definition of the individual fluctuations $(\eta^i_t)_{0\leq t\leq T}$. In the preliminary study made in \cite{chevallier2018spatial}, some regularity properties of $u(t,x)$ are shown. Then, using this a priori regularity we are able to show that $u(t,x)$ is in fact smooth.

In \cite{chevallier2018spatial}, the function of interest is not the limit potential $u(t,x)$ but the limit intensity $\lambda(t,x)$ which is proven to be continuous and uniquely characterized as the unique \emph{physical solution}\footnote{By physical solution, we mean a solution which satisfies some a priori property inherited from the microscopic model (see  \cite[equation above Proposition 5]{chevallier2018spatial})} of some fixed point equation. Nevertheless these two functions are closely linked by \cite[Equation (3.20)]{chevallier2018spatial}:
\begin{equation}\label{eq:system:u:lambda}
\begin{cases}
    u(t,x) = e^{-\alpha t} u_0 (x) + \int_0^t e^{-\alpha (t-s)} \int_{0}^{1} w(y,x) \lambda(s,y) dy ds, \ t>0 \ \mbox{and} \ x\in [0,1],\\
    \lambda(t,x)=f(u(t,x)).
\end{cases}
\end{equation}
Since $\lambda(t,x)$ belongs to $\cC([0,T]\times [0,1],\bR_+)$ \cite[Proposition 5]{chevallier2018spatial}, it then follows that $u(t,x)$ belongs to $\cC([0,T]\times [0,1],\bR)$ (which can be identified to $\cC([0,T], \cC([0,1])))$. In the following, the evaluation of a function $u(t,x)\in \cC([0,T], \cC([0,1]))$ is rather denoted by $u_t(x)$.

In particular, Equation \eqref{eq:system:u:lambda} means that $u_t(x)$ is a fixed point of the map $F$ defined by : for all $v\in \mathcal{C}([0,T],\mathcal{C}([0,1]))$, for all $t\geq 0$ and $x\in [0,1]$,
\begin{equation}\label{eq:def:contraction:NFE}
     F(v)_t(x) := e^{-\alpha t} u_0 (x) + \int_0^t e^{-\alpha (t-s)} \int_{0}^1 w(y,x) f(v_{s}(y)) dy ds,
\end{equation}
where $u_0$ is the initial condition.

\begin{proposition}\label{prop:contraction:NFE}
Under Assumption \ref{ass:f}, for all $v\in \mathcal{C}([0,T],\mathcal{C}([0,1]))$, $F(v)$ belongs to the smaller space $\mathcal{C}([0,T],\mathcal{S})$. In particular, there is a unique physical solution of the NFE and existence of a smooth solution.
\end{proposition}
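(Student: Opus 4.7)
My plan is based on the observation that the only $x$-dependence in $F(v)_t(x)$ enters through the smooth initial condition $u_0(x)$ and the smooth kernel $w(y,x)$, while the inner factor $f(v_s(y))$ is merely bounded and measurable in $(s,y)$. I will therefore (i) apply Leibniz's rule to differentiate in $x$ to all orders under the integral sign, (ii) establish continuity in $t$ for every Fréchet seminorm $\|\cdot\|_k$, and (iii) apply the conclusion to the physical solution itself, which will upgrade its known continuity to smoothness in space.

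First, since $v\in \mathcal{C}([0,T],\mathcal{C}([0,1]))$ the map $(s,y) \mapsto v_s(y)$ is jointly continuous on $[0,T]\times [0,1]$, hence bounded, and continuity of $f$ yields a constant $M_v$ with $|f(v_s(y))|\leq M_v$ for all $(s,y)\in [0,T]\times[0,1]$. Since each partial derivative $\partial_2^k w$ is continuous on the compact square $[0,1]\times [0,1]$ and hence bounded, iterated application of Leibniz's rule is justified and gives
\begin{equation*}
\partial_x^k F(v)_t(x) = e^{-\alpha t}u_0^{(k)}(x) + \int_0^t e^{-\alpha(t-s)}\int_0^1 \partial_2^k w(y,x) f(v_s(y))\, dy\, ds,
\end{equation*}
so $F(v)_t \in \mathcal{S}$ for every $t\in[0,T]$.

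Next, for continuity in $t$ with values in $\mathcal{S}$ it is enough, by the definition of the metric $d_{\mathcal{S}}$ in \eqref{eq:def:distance:S} together with an elementary dominated convergence argument on the defining series, to verify that each map $t\mapsto \|F(v)_t\|_k$ is continuous; this in turn reduces to uniform-in-$x$ continuity in $t$ of $\partial_x^j F(v)_t(x)$ for $j\leq k$. Writing $g^{(j)}_s(x)=\int_0^1 \partial_2^j w(y,x) f(v_s(y))\,dy$, which is bounded uniformly in $(s,x)$ by $\|\partial_2^j w\|_0 M_v$, the standard split
\begin{equation*}
\int_0^{t'} e^{-\alpha(t'-s)} g^{(j)}_s(x)\, ds - \int_0^{t} e^{-\alpha(t-s)} g^{(j)}_s(x)\, ds = (e^{-\alpha(t'-t)}-1)\int_0^t e^{-\alpha(t-s)}g^{(j)}_s(x)\,ds + \int_t^{t'} e^{-\alpha(t'-s)} g^{(j)}_s(x)\, ds
\end{equation*}
yields uniform convergence to $0$ in $x$ as $t'\to t$, and the $e^{-\alpha t}u_0^{(j)}(x)$ boundary term is trivially continuous.

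Finally, existence and uniqueness of a continuous physical solution $u$ of the fixed point equation $u=F(u)$ is \cite[Proposition 5]{chevallier2018spatial}, which also gives $u\in \mathcal{C}([0,T],\mathcal{C}([0,1]))$. Applying the first two steps to $v=u$ then yields $u=F(u)\in \mathcal{C}([0,T],\mathcal{S})$, which is the claimed smooth solution. I do not expect any serious obstacle; the only point deserving care is the uniform boundedness of the integrand used to justify swapping $\int$ and $\partial_x$ and then to pass to the limit in $t$, and this is immediate here because all three factors ($u_0^{(k)}$, $\partial_2^k w$, and $f\circ v$) live on compact sets.
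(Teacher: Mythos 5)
Your proof is correct and follows essentially the same route as the paper: differentiate under the integral sign to see $F(v)_t\in\mathcal{S}$, show time-continuity for each Fréchet semi-norm by the usual split of the time-convolution, and then pass to the metric $d_{\mathcal{S}}$ via the summable-tail/dominated-convergence argument, with smoothness of $u$ obtained by applying this to $v=u$. Two small remarks: the clause ``each map $t\mapsto \|F(v)_t\|_k$ is continuous'' is strictly weaker than what you need and actually verify (continuity of $t\mapsto F(v)_t$ for the norm $\|\cdot\|_k$, i.e.\ $\|F(v)_{t'}-F(v)_t\|_k\to 0$), so that intermediate sentence should be reworded; and \cite[Proposition~5]{chevallier2018spatial} is stated for the intensity $\lambda$, so the uniqueness of $u$ you cite is obtained (as the paper spells out) by transferring uniqueness from $\lambda=f(u)$ to $u$ via the fixed-point relation $u=F(u)$.
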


\begin{proof}
Let $v$ be in $\mathcal{C}([0,T],\mathcal{C}([0,1]))$. In particular, $v$ is locally bounded in time ($\sup_{t\leq T, x\in [0,1]} v_t(x)<+\infty$) so, using the Lipschitz continuity of $f$ and the smoothness of $w$ and $u_0$, it is clear that for all $t\leq T$, $F(v)_t\in \mathcal{S}$ and that
\begin{equation*}
    F(v)_t^{(k)}(x) = e^{-\alpha t} u_0^{(k)}(x) + \int_0^t e^{-\alpha (t-s)} \int_{0}^1 \partial_2^k w(y,x) f(v_{s}(y)) dy ds.
\end{equation*}
Let $s\leq t\leq T$, using the Lipschitz continuity of $f$ and the fact that $||\partial_2^k w(y,x)||_0<\infty$, we have for all $k\geq 1$,
\begin{eqnarray*}
||(F(v)_t)^{(k)} - (F(v)_s)^{(k)} ||_0 &\leq& e^{-\alpha s} |e^{\alpha (t-s)} - 1|\, ||u_0^{(k)}||_0 \\
&&+ \int_s^t \left| \int_{0}^1  \partial_2^k w(y,x) f(v_{h}(y)) dy\right| dh\\
&& + \int_0^s e^{\alpha h} \left| e^{-\alpha t} - e^{-\alpha s} \right| \left| \int_{0}^1  \partial_2^k w(y,x) f(v_{h}(y)) dy\right| dh\\
&\leq & C(t-s) \, ||u_0^{(k)}||_0 + CT(t-s)e^{\alpha T} (1+ \sup_{t\leq T} ||v_t||_0).
\end{eqnarray*}
Summing up, we get $||F(v)_t - F(v)_s ||_k\leq kC_T (t-s) (1 + ||u_0||_k + \sup_{t\leq T} ||v_t||_0)$.

Let $\varepsilon>0$ and $k_0$ be such that $\sum_{k=k_0+1}^{+\infty} 2^{-k}<\varepsilon$. It suffices then to take $s$ and $t$ close enough such that
\begin{equation*}
    k_0 C_T (t-s) (1 + ||u_0||_{k_0} + \sup_{t\leq T} ||v_t||_0) < \varepsilon,
\end{equation*}
to get $d_\infty(F(v)_t,F(v)_s)\leq 2\varepsilon$ and so $F(v)\in \mathcal{C}([0,T],\mathcal{S})$.

Assume that $u$ and $\tilde{u}$ are two physical solutions of the NFE. Then, $\lambda(t,x)=f(u(t,x))$ and $\tilde{\lambda}(t,x)=f(\tilde{u}(t,x))$ define two physical solutions of the fixed point equation \cite[Equation (3.10)]{chevallier2018spatial}. Hence, uniqueness for $\lambda$ proved in \cite{chevallier2018spatial} implies uniqueness for $u$. Existence is already proven in \cite{chevallier2018spatial}.
\end{proof}

\section{First estimates}
\label{sec:estimates}

In the sequel, for each $t\geq 0$ and $i\in[n]$, we write
$$
\begin{cases}
M^i_t=N^i_t-\int_{0}^tf(U^i_s)ds,\\
g(s,x_i)=\frac{1}{n}\sum_{j=1}^n w(x_j,x_i)f(u(s,x_j)).
\end{cases}
$$ 
Recall (see Section \ref{subsec:general_notation}) that $(M^i_t)_{t\geq 0}$ is the local martingale associated with neuron $i$. 
With this notation, by using \eqref{def:membrane_potential} and \eqref{def:neural_field_equation},
we can rewrite $\eta^i_t=n^{1/2}(U^i_t-u(t,x_i))$ as follows:
$$
\eta^i_t=A^i_t+B^i_t+C^i_t,
$$
where $A^i_t$, $B^i_t$ and $C^i_t$ are given respectively by
\begin{equation}
\begin{cases}
\label{def:eta_it_decomposition}
A^i_t=e^{-\alpha t}n^{-1/2}\sum_{j=1}^n \int_{0}^te^{\alpha s}w(x_j,x_i)dM^j_s,\\
B^i_t=n^{-1/2}\sum_{j=1}^n \int_{0}^te^{-\alpha (t-s)}w(x_j,x_i)\left(f(U^j_s))-f(u(s,x_j))\right)ds,\\
C^i_t=n^{1/2}\int_{0}^te^{-\alpha(t-s)}
\left(g(s,x_i)-\int_{0}^1w(y,x_i)f(u(s,y))dy\right)ds.
\end{cases}
\end{equation}
Note that $(C^i_t)_{t\geq 0}$ is deterministic, while both $(A^i_t)_{t\geq 0}$ and $(B^i_t)_{t\geq 0}$ are stochastic. Furthermore, $(A^i_t)_{t\geq 0}$ belongs to $\cD(\bR_+)$ (but is not a local martingale even if $(M^i_t)_{t\geq 0}$ is) and $(B^i_t)_{t\geq 0}$ belongs to $\cC(\bR_+)$. Although every object defined above depends on $n$,  we omit this dependence to ease the notation.

We start this section with the following result.

\begin{proposition}
\label{Prop:upper_bound_eta_it_square}
Assume that $f\in\cC^1(\bR)$ is Lipschitz continuous, $u_0$ is Lipschitz continuous, $u(t,x)\in\cC([0,T],\cC^1([0,1]))$ and $w$ is bounded such that $y\to\partial_1w(y,x)$ exists for all $x\in [0,1]$ and $\sup_{x\in [0,1]}\|\partial_1 w(\cdot, x)\|_{0}<\infty$.  Then, for each $T>0,$
\begin{equation}
  \sup_{n\geq 1} \sup_{0\leq t\leq T} \max_{i\in [n]} \E\left[(\eta^i_t)^2\right]<\infty.      
\end{equation}
\end{proposition}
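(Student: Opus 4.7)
The plan is to exploit the additive decomposition $\eta^i_t=A^i_t+B^i_t+C^i_t$ from \eqref{def:eta_it_decomposition} and bound
\begin{equation*}
    \E[(\eta^i_t)^2]\leq 3\bigl(\E[(A^i_t)^2]+\E[(B^i_t)^2]+(C^i_t)^2\bigr),
\end{equation*}
handling each term separately and then closing a Gronwall loop. Before doing so I would establish the auxiliary uniform estimate
\begin{equation*}
    \Lambda_T := \sup_{n\geq 1}\sup_{s\leq T}\max_{j\in[n]}\E[f(U^j_s)]<\infty.
\end{equation*}
This follows from $|f(u)|\leq f(0)+\|f'\|_0|u|$ together with $|U^j_s|\leq \|u_0\|_0+\|w\|_0 \,n^{-1}\sum_k\int_0^s e^{-\alpha(s-r)}dN^k_r$; taking expectations, using the compensator $f(U^k_r)$, and applying Gronwall to $m(s)=\max_j \E[f(U^j_s)]$ yields $m(s)\leq Ce^{C's}$.

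Next, for $A^i_t$ the key point is that the $M^j$'s are pairwise orthogonal local martingales (they never jump simultaneously, by part 1 of Definition \ref{def:SEHP}), so
\begin{equation*}
    \E[(A^i_t)^2]=e^{-2\alpha t}n^{-1}\sum_{j=1}^n w(x_j,x_i)^2\int_0^t e^{2\alpha s}\E[f(U^j_s)]ds\leq \|w\|_0^2 T\,\Lambda_T,
\end{equation*}
which is bounded uniformly in $n,i,t\leq T$. For the deterministic term $C^i_t$, I would recognize $g(s,x_i)-\int_0^1 w(y,x_i)f(u(s,y))dy$ as the Riemann-sum error for the integrand $y\mapsto w(y,x_i)f(u(s,y))$. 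Under the stated regularity on $w$, $f$ and $u$, this integrand is Lipschitz in $y$ uniformly in $(s,x_i)\in[0,T]\times[0,1]$, so the error is $O(1/n)$; the prefactor $n^{1/2}$ then gives $|C^i_t|\leq C T n^{-1/2}$, uniformly bounded.

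For $B^i_t$ the Lipschitz continuity of $f$ gives
\begin{equation*}
    |B^i_t|\leq \|f'\|_0\|w\|_0\,n^{-1}\sum_{j=1}^n\int_0^t e^{-\alpha(t-s)}|\eta^j_s|ds,
\end{equation*}
so by two applications of Jensen's inequality (to the average over $j$ and to the time integral),
\begin{equation*}
    \E[(B^i_t)^2]\leq C\int_0^t \Bigl(\max_{j\in[n]}\E[(\eta^j_s)^2]\Bigr)ds.
\end{equation*}
Writing $\phi_n(t):=\max_{i\in[n]}\E[(\eta^i_t)^2]$ and combining the three bounds gives $\phi_n(t)\leq K_1+K_2\int_0^t \phi_n(s)ds$ for constants independent of $n$. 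Gronwall's lemma then provides the desired uniform bound.

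The only mildly delicate point is the preliminary bound $\Lambda_T<\infty$ (since $f$ is allowed to be unbounded, one must self-consistently close the estimate via Gronwall); everything else is a routine combination of the orthogonality of the Hawkes martingales, Lipschitz control and a Riemann-sum remainder estimate. I expect the main obstacle to be verifying carefully that the assumed regularity ($u(t,\cdot)\in\cC^1$, $\partial_1 w$ bounded) is indeed enough for the $O(1/n)$ Riemann-sum estimate uniformly in $(s,i)$; this is where the exact hypothesis set of the proposition is used.
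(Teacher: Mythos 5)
Your decomposition and the treatment of the three terms $A^i_t$, $B^i_t$, $C^i_t$ essentially mirror the paper's proof: orthogonality of the $M^j$'s for the martingale term, a Riemann-sum remainder of order $1/n$ for $C^i_t$, and two applications of Jensen plus the Lipschitz property of $f$ for $B^i_t$. The preliminary bound $\Lambda_T<\infty$ is a small detour from the paper, which instead bounds $\E\bigl[\int_0^T f(U^j_s)\,ds\bigr]=\E[N^j_T]$ and invokes \cite[Proposition 3]{chevallier2018spatial}; both routes deliver the same control on the $A$ term.

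There is, however, a genuine gap in your closing step. You write $\phi_n(t):=\max_{i\in[n]}\E[(\eta^i_t)^2]$, derive $\phi_n(t)\leq K_1+K_2\int_0^t\phi_n(s)\,ds$, and invoke Gronwall. But Gronwall's lemma requires an a priori guarantee that $\phi_n$ is (locally) bounded, and at this stage nothing you have established gives finiteness of $\E[(\eta^i_t)^2]$: the first-moment control $\Lambda_T<\infty$ (i.e.\ $\E[N^j_t]<\infty$) does not by itself yield $\E[(N^j_t)^2]<\infty$, hence does not bound $\E[(U^i_t)^2]$ or $\E[(\eta^i_t)^2]$. The inequality $\phi_n(t)\leq K_1+K_2\int_0^t\phi_n(s)\,ds$ is vacuous if both sides could be $+\infty$. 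The paper avoids this circularity by a localization device: it introduces the stopping times $\tau_k=\inf\{t\leq T:\max_i|\eta^i_t|\geq k\}$, applies the Gronwall argument to the stopped processes $\eta^i_{t\wedge\tau_k}$ (whose second moments are trivially finite, since the stopped fluctuations are bounded by $k$ up to a single jump of size at most $\|w\|_0/\sqrt n$), obtains a bound uniform in $k$, and then lets $k\to\infty$ using Lemma~\ref{lemma:truncation} ($\tau_k\to T$ a.s.) together with Fatou's lemma. Your proposal needs this truncation-and-Fatou step; without it the final Gronwall application is not justified. The same remark applies, in a milder form, to the Gronwall argument you sketch for $\Lambda_T$ itself.
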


\begin{proof}
By Jensen inequality, we have that
$$
\E\left[(\eta^i_t)^2\right]\leq 3\left(\E\left[ (A^i_t)^2\right]+\E\left[(B^i_t)^2\right]+(C^i_t)^2\right).
$$
Now, we will bound from above each term on the RHS of the inequality above. 
We will start with $\E\left[ (A^i_t)^2\right].$ To that end, we use \cite[Proposition II.4.1.]{gill_1997} and the fact that $w$ is bounded to obtain that for all $0\leq t\leq T$, 

\begin{align}
\label{Control_on_Ai_t}
\E\left[ (A^i_t)^2\right]&=e^{-2\alpha t}\frac{1}{n}\sum_{j=1}^n\E\left[\int_{0}^te^{2\alpha s}w^2(x_j,x_i)f(U^j_s)ds\right] \nonumber\\
&\leq \|w\|_0^2\frac{1}{n}\sum_{j=1}^n\E\left[\int_{0}^tf(U^j_s)ds\right] \nonumber\\
&=\|w\|_0^2\frac{1}{n}\sum_{j=1}^n\E\left[N^j_t\right]\leq \|w\|_0^2 \frac{1}{n}\sum_{j=1}^n\E\left[N^j_T\right].
\end{align}
Since $w$ is bounded and $u_0$ is Lipschitz continuous on $[0,1]$  (hence bounded as well),  \cite[Proposition 3]{chevallier2018spatial} implies that not only the RHS of \eqref{Control_on_Ai_t} is finite, but also that 
\begin{equation}
\label{Control_on_sup_T_sup_Ai_t}
    \sup_{n\geq 1}\sup_{0\leq t\leq T}\max_{i\in [n]}\E[(A^i_t)^2]<\infty.
\end{equation}

Next, we will deal with the term $(C^i_t)^2.$ From the classical Riemann approximation, we have for each $i\in [n]$ and $s\geq 0$,
\begin{multline}
\left |g(s,x_i)-\int_{0}^1 w(y,x_i)f(u(s,y))dy\right|\leq \frac{1}{2n}\sup_{y\in [0,1]}\left | \partial_1w(y,x_i)f(u(s,y)\right. \\ \left. +w(y,x)f'(u(s,y))\partial_2 u(s,y)\right |,  
\end{multline}
and thus we obtain for $t\geq 0$,
$$
|C^i_t|\leq \frac{1}{2n^{1/2}}\max_{j\in [n]}\sup_{0\leq s\leq t,y\in [0,1]}\left | \partial_1w(y,x_j)f(u(s,y)) +w(y,x)f'(u(s,y))\partial_2 u(s,y)\right |.
$$
As a consequence of the inequality above, we have for all $0\leq t\leq T$,
\begin{align*}
|C^i_t|^2&\leq \frac{1}{4n}\left(\sup_{0\leq s\leq T,x, y\in [0,1]}\left | \partial_1w(y,x)f(u(s,y) +w(y,x)f'(u(s,y))\partial_2 u(s,y)\right |\right)^2.
\end{align*}
Since $M_T=\sup_{0\leq s\leq T}\|u(s,\cdot)\|_{0}<\infty$ and $f$ is Lipschitz continuous, we have that $f$ is locally bounded, implying that $\sup_{s\leq T,y,\in [0,1]}|f(u(s,y))|<\infty$. The assumptions on the functions $u$ and $w$ ensure that both $\sup_{x,y\in [0,1]}\|\partial_1 w(y,x)\|_{0}$ and $\sup_{s\leq T,y\in [0,1]}\|\partial_2 w(s,y)\|_{0}$ are finite, so that 
\begin{equation}
\label{Control_on_Ci_t}
\sup _{n\geq 1}\sup_{t\leq T}\max_{i\in [n]}|C^i_t|^2<\infty.
\end{equation}

It remains to deal with the term $\E\left[(B^i_t)^2\right].$ In what follows, fix an integer $k\geq 1$, and consider $\tau_k=\inf\{0\leq t\leq T :\max_{i\in [n]}|\eta^i_t|\geq k\}$.
By applying Jensen inequality twice and using the fact that $f$ is Lipschitz continuous, we deduce that
\begin{align*}
\E\left[(B^i_{t\wedge \tau_k})^2\right]&\leq \sum_{j=1}^n\E\left[\left(\int_{0}^{t\wedge \tau_k} e^{-\alpha ((t\wedge \tau_k)-s)}w(x_j,x_i)(f(U^j_s)-f(u(s,x_j)))ds\right)^2\right]\\
&\leq  t \|f'\|^2_0\frac{1}{n}\sum_{j=1}^n \int_{0}^tw^2(x_j,x_i)\E\left[(\eta^j_{s\wedge\tau_k})^2\right]ds.    
\end{align*}
Now, since $w$ is a bounded function, it follows then that for any $0\leq t\leq T$,
\begin{align}
\label{Control_on_Bi_t}
\E\left[(B^i_{t\wedge \tau_k})^2\right]&\leq T\|f'\|^2_0\|w\|^2_{0}\int_{0}^t\max_{j\in [n]}\E\left[(\eta^j_{s\wedge \tau_k})^2\right]ds.
\end{align}

Combining \eqref{Control_on_sup_T_sup_Ai_t}, \eqref{Control_on_Bi_t} and \eqref{Control_on_Ci_t}, we have that there exists a finite positive constant $C=C(T,w,f,u)$ such that for all $0\leq t\leq T$ and any integer $n\geq 1$,
$$
\max_{i\in [n]}\E\left[(\eta^i_{t\wedge\tau_k})^2\right]\leq C \left( 1+\int_{(0,t]}\max_{i\in [n]}\E\left[(\eta^i_{s\wedge\tau_k})^2\right]ds\right). 
$$
Since $t\to \E[(\eta^i_{t\wedge\tau_k})^2]$ is locally bounded, we may apply Gronwall inequality to conclude that for all $0\leq t\leq T$ and any integer $n\geq 1$
$$
\max_{i\in [n]}\E\left[(\eta^i_{t\wedge\tau_k})^2\right]<C,
$$
for some finite positive constant $C=C(T,w,f,u).$ By Lemma \ref{lemma:truncation}, which is in \ref{App:lemmas}, we know that $\tau_{k}\to T$ a.s. as $k\to\infty$, and hence, by Fatou's lemma, for all $t\leq T$ and integer $n\geq 1$,
$$
\max_{i\in [n]}\E\left[(\eta^i_{t})^2\right]<C,
$$
implying the result.
\end{proof}

\begin{corollary}
Under the assumptions of Proposition \ref{Prop:upper_bound_eta_it_square}, for all bounded functions $\varphi:[0,1]\to \bR$ and $T>0,$
\begin{equation}
\sup_{n\geq 1} \sup_{0\leq t\leq T}\E[(\Gamma^n_t(\varphi))^2]<\infty .   
\end{equation}
\end{corollary}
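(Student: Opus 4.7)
The plan is to reduce the statement to the uniform control on individual fluctuations provided by Proposition \ref{Prop:upper_bound_eta_it_square} via a simple Cauchy--Schwarz (or Jensen) inequality. Writing
\begin{equation*}
\Gamma^n_t(\varphi) = \frac{1}{n}\sum_{i=1}^n \eta^i_t \varphi(x_i),
\end{equation*}
I would bound $(\Gamma^n_t(\varphi))^2$ by $\|\varphi\|_0^2 \cdot \frac{1}{n}\sum_{i=1}^n (\eta^i_t)^2$, take expectation, and then use the bound $\frac{1}{n}\sum_{i=1}^n \E[(\eta^i_t)^2] \leq \max_{i\in[n]} \E[(\eta^i_t)^2]$ which by the previous proposition is uniformly bounded in $n\geq 1$ and $t\in [0,T]$.

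More precisely, the key step is the Cauchy--Schwarz inequality applied to the discrete uniform probability measure on $\{x_1,\dots,x_n\}$:
\begin{equation*}
\left(\frac{1}{n}\sum_{i=1}^n \eta^i_t \varphi(x_i)\right)^2 \leq \left(\frac{1}{n}\sum_{i=1}^n (\eta^i_t)^2\right)\left(\frac{1}{n}\sum_{i=1}^n \varphi(x_i)^2\right)\leq \|\varphi\|_0^2\cdot \frac{1}{n}\sum_{i=1}^n (\eta^i_t)^2,
\end{equation*}
where $\|\varphi\|_0 = \sup_{x\in[0,1]}|\varphi(x)|<\infty$ since $\varphi$ is bounded. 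Taking expectation, exchanging with the finite sum, and invoking Proposition \ref{Prop:upper_bound_eta_it_square} yields the desired uniform bound.

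There is essentially no obstacle here: the statement is a direct consequence of the previous proposition, and the only ingredient beyond it is the elementary Cauchy--Schwarz inequality combined with the boundedness of $\varphi$. The result will be used later to control the second moment of linear functionals of $\Gamma^n_t$ when proving tightness via Aldous' criterion.
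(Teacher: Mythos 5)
Your proof is correct and follows essentially the same route as the paper: the paper applies Jensen's inequality to obtain $\left(\frac{1}{n}\sum_i \eta^i_t\varphi(x_i)\right)^2\leq \frac{1}{n}\sum_i\varphi^2(x_i)(\eta^i_t)^2$, which after taking expectations and bounding $\varphi$ by $\|\varphi\|_0$ is the same estimate you get from Cauchy--Schwarz. The two inequalities are interchangeable here, and in both cases Proposition \ref{Prop:upper_bound_eta_it_square} then closes the argument.
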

\begin{proof}
Apply Jensen inequality to deduce that
\begin{align*}
\E((\Gamma^n_t(\varphi))^2)&=\E\left(\left(\frac{1}{n}\sum_{i=1}^n\eta^i_t\varphi(x_i)\right)^2\right)\\
&\leq \frac{1}{n}\sum_{i=1}^n\varphi^2(x_i)\E((\eta^i_t)^2).
\end{align*}
Since $\varphi$ is bounded, the result then follows from Proposition \ref{Prop:upper_bound_eta_it_square}.
\end{proof}

\section{Tightness}
\label{sec:tightness}
The goal of this section is to prove that the sequence of $\cS^{\prime}$-valued stochastic processes $(\Gamma^n)_{n\geq 1}$ is tight in $\cD([0,T],\cS^{\prime}).$ According to Mitoma \cite[Theorem 4.1]{Mitoma_89}, it suffices to show that the sequence of stochastic processes $(\Gamma^n(\varphi))_{n\geq 1}$ is tight in $\cD([0,T],\bR)$, for each fixed  $\varphi\in \cS.$

In what follows, we fix $\varphi\in \cS$ and consider the sequence of stochastic processes $(\Gamma^n(\varphi))_{n\geq 1}.$ Our goal is to show that this sequence is tight in $\cD([0,T],\bR)$. 
To show this, we use {\it Aldous' tightness criterion}. According to Aldous \cite[Theorem 16.10]{Billing_Convergence}, a sequence of stochastic processes $(X^n)_{n\geq 1}$ in $\cD([0,T],\bR)$ is tight if both conditions below are satisfied:
\begin{enumerate}
    \item for any $0\leq t\leq T$ and  $\epsilon>0$, there exist an integer $n_0\geq 1$ and $K>0$ such that
    \begin{equation}
    \label{Def:aldous_condi_1}
    \sup_{n\geq n_0}\P(|X^n_t|>K)\leq \epsilon.    
    \end{equation}
    \item for any $\epsilon_1, \epsilon_2>0$,  there exist $\delta>0$ and an integer $n_0\geq 1$ such that
    \begin{equation}
    \label{Def:aldous_condi_2}
    \sup_{n\geq n_0}\sup_{(\tau',\tau)\in St_{\delta}}\P(|X^n_{\tau'}-X^n_{\tau}|>\epsilon_1)\leq \epsilon_2,    
    \end{equation}
    where $St_{\delta}$ is the set of all pairs $(\tau',\tau)$ of $(\cF_t)_{t\geq 0}$-stopping times such that $\P$-a.s we have $\tau\leq \tau'\leq \tau+\delta\leq T.$  
\end{enumerate}

To verify that $(\Gamma^n(\varphi))_{n\geq 1}$ satisfies Aldous's criterion, it will be convenient to introduce some new notation. 
Note that for each $t\geq 0$ and all $n\geq 1$, the spatial fluctuation $\Gamma_t^n(\varphi)$ can be rewritten as follows:
\begin{equation}
\label{def:gamma_phi_decomposition}
\Gamma^n_t(\varphi)=A^n_t(\varphi)+B^n_t(\varphi)+C^n_t(\varphi),    
\end{equation}
with $A^n_t(\varphi)$, $B^n_t(\varphi)$ and $C^n_t(\varphi)$ given respectively by
\begin{equation}
\begin{cases}
\label{def:A_B_C_phi}
A^n_t(\varphi)=\frac{1}{n}\sum_{i=1}^{n}\varphi(x_i)A^i_t,\\
B^n_t(\varphi)=\frac{1}{n}\sum_{i=1}^{n}\varphi(x_i)B^i_t,\\
C^n_t(\varphi)=\frac{1}{n}\sum_{i=1}^{n}\varphi(x_i)C^i_t.
\end{cases}
\end{equation}
where $A^i_t$, $B^i_t$ and $C^i_t$ for $1\leq i\leq n$ are defined in \eqref{def:eta_it_decomposition}.

For later use, it will be useful to write $A^n_t(\varphi)=e^{-\alpha t} M^n_t(\varphi)$, where $M^n(\varphi)$ is a local martingale  given for all $t\geq 0$ by 
\begin{equation}
\label{def:Martinga_M_n_phi_t}
M^n_t(\varphi)=\frac{1}{n^{1/2}}\sum_{j=1}^n\int_{0}^te^{\alpha s}\frac{1}{n}\sum_{i=1}^nw(x_j,x_i)\varphi(x_i)dM^j_s.
\end{equation}
By writing $I^n[\varphi](y):= n^{-1} \sum_{i=1}^n w(y,x_i)\varphi(x_i)$, one can check (see for instance \cite{gill_1997}) that its angle bracket is given by 
\begin{equation}
\label{def:_angle_bracket_Martinga_M_n_phi_t}
    \langle M^n(\varphi)\rangle_t = n^{-1}\sum_{j=1}^n \int_{0}^t e^{2\alpha s} I^n[\varphi](x_j)^2 f(U^j_s) ds,
\end{equation}
and by the polarization identity,
\begin{equation}
\label{def:_angle_bibracket_Martinga_M_n_phi_t}
    \langle M^n(\varphi_1), M^n(\varphi_2)\rangle_t = n^{-1}\sum_{j=1}^n \int_{0}^t e^{2\alpha s} I^n[\varphi_1](x_j) I^n[\varphi_2](x_j) f(U^j_s) ds.
\end{equation}
With this notation, we can prove the following result.
\begin{proposition}\label{prop:tightness_of_A_B_M:in:R}
Let us make the same assumptions as in Proposition \ref{Prop:upper_bound_eta_it_square}.
Then, for each fixed $\varphi$ in $\cS$, the sequences of the stochastic processes $(M^n(\varphi))_{n\geq 1}$ and $(A^n(\varphi))_{n\geq 1}$ are tight in the space $\mathcal{D}([0,T])$ and $(B^n(\varphi))_{n\geq 1}$ is tight in $\cC([0,T])$. Moreover, the sequence of functions $(C^n(\varphi))_{n\geq 1}$ satisfies
\begin{equation}
\label{control_of_C_phi}
    \sup_{t\in [0,T]} |C^n_t(\varphi)| \leq C n^{-1/2} ||\varphi||_0,
\end{equation}
where $C=C(T,w,f,u)$ is some finite positive constant.
\end{proposition}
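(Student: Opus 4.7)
\textbf{Proof plan for Proposition \ref{prop:tightness_of_A_B_M:in:R}.}

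I would treat the four sequences separately, handling $C^n(\varphi)$ first since it is the simplest, then $M^n(\varphi)$ via Aldous, then deducing $A^n(\varphi)$ from $M^n(\varphi)$, and finally $B^n(\varphi)$ via a Kolmogorov-type modulus-of-continuity estimate.

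\textbf{Bound on $C^n(\varphi)$.} Looking back at the proof of Proposition \ref{Prop:upper_bound_eta_it_square}, the argument controlling $C^i_t$ actually provides the pathwise estimate $|C^i_t|\leq K n^{-1/2}$ with $K=K(T,w,f,u)$ finite, since it comes from a Riemann-sum remainder of step $1/n$. Averaging gives $|C^n_t(\varphi)|\leq n^{-1}\sum_i|\varphi(x_i)||C^i_t|\leq \|\varphi\|_0\,K\,n^{-1/2}$, uniformly in $t\in[0,T]$.

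\textbf{Tightness of $M^n(\varphi)$ via Aldous.} Using the bracket formula \eqref{def:_angle_bracket_Martinga_M_n_phi_t} and the boundedness of $w$ and $\varphi$, one has $|I^n[\varphi](y)|\leq \|w\|_0\|\varphi\|_0$, hence
\begin{equation*}
    \E\bigl[\langle M^n(\varphi)\rangle_t\bigr] \leq e^{2\alpha T}\|w\|_0^2\|\varphi\|_0^2 \int_0^t \frac{1}{n}\sum_{j=1}^n \E[f(U^j_s)]\,ds.
\end{equation*}
Since $f$ is Lipschitz, $f(U^j_s)\leq f(0)+\|f'\|_0|U^j_s|$, and $\E[|U^j_s|]\leq |u(s,x_j)|+n^{-1/2}\E[|\eta^j_s|]$ is uniformly bounded by Proposition \ref{Prop:upper_bound_eta_it_square}. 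This gives $\sup_n\sup_{t\leq T}\E[M^n_t(\varphi)^2]<\infty$, verifying Aldous's condition \eqref{Def:aldous_condi_1} via Markov's inequality. For condition \eqref{Def:aldous_condi_2}, given stopping times $\tau\leq\tau'\leq\tau+\delta$, the optional stopping theorem yields
\begin{equation*}
    \E\bigl[(M^n_{\tau'}(\varphi)-M^n_\tau(\varphi))^2\bigr] = \E\bigl[\langle M^n(\varphi)\rangle_{\tau'}-\langle M^n(\varphi)\rangle_\tau\bigr]\leq C\delta,
\end{equation*}
where the last inequality follows from the same pointwise bound on the integrand of the bracket as above.

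\textbf{Tightness of $A^n(\varphi)$.} Since $A^n_t(\varphi)=e^{-\alpha t}M^n_t(\varphi)$ and pointwise multiplication by the deterministic continuous function $t\mapsto e^{-\alpha t}$ is a continuous map on $\mathcal{D}([0,T])$, tightness of $A^n(\varphi)$ follows from tightness of $M^n(\varphi)$ by the continuous mapping theorem (or, equivalently, by a direct Aldous computation splitting $A^n_{\tau'}-A^n_\tau = e^{-\alpha\tau'}(M^n_{\tau'}-M^n_\tau) + (e^{-\alpha\tau'}-e^{-\alpha\tau})M^n_\tau$).

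\textbf{Tightness of $B^n(\varphi)$ in $\mathcal{C}([0,T])$.} Rewriting
\begin{equation*}
    B^n_t(\varphi) = \int_0^t e^{-\alpha(t-s)} X^n_s \,ds,\qquad X^n_s := \frac{1}{n^{1/2}}\sum_{j=1}^n I^n[\varphi](x_j)\bigl(f(U^j_s)-f(u(s,x_j))\bigr),
\end{equation*}
the Lipschitz property of $f$ gives $|X^n_s|\leq \|f'\|_0\|w\|_0\|\varphi\|_0\, n^{-1}\sum_j|\eta^j_s|$, and by Jensen plus Proposition \ref{Prop:upper_bound_eta_it_square}, $\sup_n\sup_{s\leq T}\E[(X^n_s)^2]<\infty$. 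For $t_1\leq t_2$, decomposing $B^n_{t_2}-B^n_{t_1} = \int_{t_1}^{t_2}e^{-\alpha(t_2-s)}X^n_s ds + \int_0^{t_1}(e^{-\alpha(t_2-s)}-e^{-\alpha(t_1-s)})X^n_s ds$ and using $|e^{-\alpha(t_2-s)}-e^{-\alpha(t_1-s)}|\leq \alpha(t_2-t_1)$, I get by Cauchy--Schwarz
\begin{equation*}
    \E\bigl[|B^n_{t_2}(\varphi)-B^n_{t_1}(\varphi)|^2\bigr]\leq C(t_2-t_1)^2,
\end{equation*}
with $C$ independent of $n$. Since $B^n_0(\varphi)=0$, Kolmogorov's continuity criterion applied uniformly in $n$ yields tightness of $(B^n(\varphi))_{n\geq 1}$ in $\mathcal{C}([0,T])$.

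\textbf{Main obstacle.} The only delicate point is to make sure the linear growth of $f$ is compensated by uniform moment estimates on $U^j_s$ when bounding the bracket of $M^n(\varphi)$; this is where the bound $\sup_n\sup_{s\leq T}\max_j\E[(\eta^j_s)^2]<\infty$ from Proposition \ref{Prop:upper_bound_eta_it_square} is essential. Everything else is a standard Aldous/Kolmogorov argument.
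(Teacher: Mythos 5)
Your plan is essentially the paper's for $C^n$ and $A^n$, a cleaner alternative for $B^n$, but it has a genuine gap in the Aldous estimate for $M^n$.

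\textbf{The gap.} For Aldous's condition \eqref{Def:aldous_condi_2} on $M^n(\varphi)$ you write
\[
\E\bigl[\langle M^n(\varphi)\rangle_{\tau'}-\langle M^n(\varphi)\rangle_\tau\bigr]\leq C\delta,
\]
justified by ``the same pointwise bound on the integrand of the bracket as above.'' But there is no pointwise bound: the bracket integrand contains $f(U^j_s)$, and under Assumption \ref{ass:f} the function $f$ may be unbounded (only $f'$ is bounded, so $f$ grows linearly). Your earlier estimate bounds $\E[f(U^j_s)]$ at each fixed $s$, which suffices when integrating against a deterministic $dt$, but
\[
\E\left[\int_\tau^{\tau'} f(U^j_s)\,ds\right]
\]
cannot be bounded by $C\delta$ just from $\sup_s\E[f(U^j_s)]<\infty$ because the interval of integration is random. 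Decomposing $f(U^j_s)\leq f(u(s,x_j))+\|f'\|_0 n^{-1/2}|\eta^j_s|$, the deterministic part does give $O(\delta)$, but the term $\E[\int_\tau^{\tau'} |\eta^j_s|\,ds]$ must be handled by Cauchy--Schwarz in time (or, as the paper does, by Young's inequality with the choice $\xi=\sqrt{\delta}$), yielding a bound of order $\delta+\sqrt{\delta}$ rather than $\delta$. The conclusion (Aldous condition 2) still holds, but the step as you wrote it is not a proof. The same comment applies if you intended to transfer this bound through optional stopping to $\E[(M^n_{\tau'}-M^n_\tau)^2]$: that identity is fine, but the right-hand side estimate is the part that needs repair.

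\textbf{Where you diverge usefully.} For $B^n(\varphi)$ you prove the $\cC([0,T])$-tightness via a Kolmogorov-type moment estimate
\[
\E\bigl[|B^n_{t_2}(\varphi)-B^n_{t_1}(\varphi)|^2\bigr]\leq C(t_2-t_1)^2,
\]
uniform in $n$, together with $B^n_0(\varphi)=0$. This is correct (the uniform bound $\sup_n\sup_s\E[(X^n_s)^2]<\infty$ follows from Proposition~\ref{Prop:upper_bound_eta_it_square} exactly as you say), and it is genuinely different from the paper, which instead verifies Aldous's two conditions for $B^n(\varphi)$ using stopping times and again Young's inequality. Your Kolmogorov route is arguably more transparent here because $B^n(\varphi)$ is already a time integral with a smooth kernel, so one gets Hölder control of increments for free; the paper's Aldous route is more uniform in style with the treatment of $M^n$ and $A^n$. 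Both are fine for this part. For $A^n(\varphi)$ you use the same observations as the paper (the $e^{-\alpha t}$ factor is deterministic and bounded, increment decomposition), and the $C^n(\varphi)$ bound is identical.
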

\begin{proof}
We need to show that the sequences $(M^n(\varphi))_{n\geq 1}$, $(A^n(\varphi))_{n\geq 1}$ and $(B^n(\varphi))_{n\geq 1}$ satisfy Aldous' criterion. We start with $(M^n(\varphi))_{n\geq 1}$. 
All along the proof, we write $L$ to denote a constant which might change from line to line and depends only on $T,\|w\|_0, \|\varphi\|_0, \alpha$ and  $\|f\|_0.$

{\bf Tightness of $(M^n(\varphi))_{n\geq 1}$.} Fix $0\leq t\leq T$ and $\epsilon>0$. By Markov's inequality, for all $n\geq 1$ and $K>0$, we have
$$
\P(|M^n_t(\varphi)|>K)\leq \frac{1}{K^2}\E(|M^n_t(\varphi)|^2),
$$
so that by \eqref{def:_angle_bracket_Martinga_M_n_phi_t}, we can deduce from the above inequality that
$$
\P(|M^n_t(\varphi)|>K)\leq \frac{1}{K^2}\frac{1}{n}\sum_{j=1}^{n}\E\left(\int_{0}^t\left(n^{-1}\sum_{i=1}^n \varphi(x_i) w(x_j,x_i) \right)^2 f(U^j_s) e^{2\alpha s}ds\right).
$$
Since $w$ and $\varphi$ are bounded, it follows from this last inequality that
$$
\P(|M^n_t(\varphi)|>K)\leq \frac{L}{K^2}\frac{1}{n}\sum_{j=1}^{n}\E\left(\int_{0}^tf(U^j_s)ds\right)\leq \frac{L}{K^2}\sup_{n\geq 1}\frac{1}{n}\left\{\sum_{j=1}^{n}\E\left(N^j_T\right)\right\}.
$$
Then, \cite[Proposition 3]{chevallier2018spatial} together with the inequality above implies that for all $n\geq 1$,
$$
\P(|M^n_t(\varphi)|>K)\leq \frac{L}{K^2},
$$
and the condition \eqref{Def:aldous_condi_1} holds whenever $K\geq \sqrt{L/\epsilon}.$

It remains to show that $(M^n(\varphi))_{n\geq 1}$ satisfies the condition \eqref{Def:aldous_condi_2}. By \cite[Theorem 2.3.2]{Rebolledo_80}, it suffices to show that $(\langle M^n(\varphi)\rangle)_{n\geq 1}$ satisfies the condition \eqref{Def:aldous_condi_2}. Thus, take stopping times $(\tau,\tau')\in St_{\delta}$ and observe that by \eqref{def:_angle_bracket_Martinga_M_n_phi_t}, 
\begin{align*}
\E\left(\left|\langle M^n(\varphi)\rangle_{\tau'}-\langle M^n(\varphi)\rangle_{\tau}\right|\right)&=n^{-1}\sum_{j=1}^n \E\left(\left| \int_{\tau}^{\tau'} \left(n^{-1}\sum_{i=1}^n \varphi(x_i) w(x_j,x_i) \right)^2 f(U^j_s) e^{2\alpha s}ds. \right|\right)\\
&\leq L\frac{1}{n}\sum_{j=1}^n\E\left(\int_{\tau}^{\tau'} f(U^j_s) e^{2\alpha s} ds \right).
\end{align*}
Now, note that for all $j\in [n]$ and $t\geq 0$,
$$
f(U^j_s)\leq f(u(t,x_j))+\frac{\|f'\|_{0}}{\sqrt{n}}|\eta^j_t|\leq \sup_{t\leq T}\|f(u(t,\cdot))\|+\frac{\|f'\|_{0}}{\sqrt{n}}|\eta^i_t|.
$$
The local boundedness of both $f$ and $u$, and the fact that $0\leq \tau\leq\tau'\leq T$, imply then that
$$
\E\left(\left|\langle M^n(\varphi)\rangle_{\tau'}-\langle M^n(\varphi)\rangle_{\tau}\right|\right)\leq L\left(\E(\tau'-\tau)+\frac{1}{n^{3/2}}\sum_{j=1}^n\E\left(\int_{\tau}^{\tau'}e^{2\alpha s}|\eta^j_s|ds\right)\right)
$$
By applying Young inequality, we have for all $s\geq 0$, $j\in [n]$ and $\xi>0$,
$$
e^{2\alpha s}|\eta^j_s|\leq \frac{1}{2\xi}e^{4\alpha s}+\frac{\xi}{2}|\eta^j_s|^2, 
$$
so that
$$
\E\left(\int_{\tau}^{\tau'}e^{2\alpha s}|\eta^j_s|ds\right)\leq \frac{1}{8\xi\alpha}\E(e^{4\alpha \tau'}-e^{4\alpha \tau})+\frac{\xi}{2}\int_{0}^{T}\E|\eta^j_s|^2ds,
$$
where in the last inequality we have also used the fact that $0\leq \tau\leq \tau'\leq T.$
By using that $|e^{x}-e^{y}|\leq |x-y|\frac{1}{2}(e^x+e^y)$ in the previous inequality and the fact that $\tau'-\tau\leq \delta$, it follows that
$$
\E(\tau'-\tau)+\frac{1}{n^{3/2}}\sum_{j=1}\E\left(\int_{\tau}^{\tau'}e^{2\alpha s}|\eta^j_s|ds\right)\leq \delta+ \frac{\delta}{2\xi}e^{4\alpha T}+\frac{T\xi}{2n^{1/2}}\sup_{t\leq T}\max_{j\in [n]}\E(|\eta^j_s|^2).
$$
By using Proposition \ref{Prop:upper_bound_eta_it_square} and taking $\xi=\sqrt{\delta}$ in the inequality above, we deduce that
$$
\E\left(\left|\langle M^n(\varphi)\rangle_{\tau'}-\langle M^n(\varphi)\rangle_{\tau}\right|\right)\leq L(\delta+\sqrt{\delta}),
$$
showing that $(\langle M^n(\varphi)\rangle)_{n\geq 1}$ satisfies the condition \eqref{Def:aldous_condi_2} and thus the tightness of $(M^n(\varphi))_{n\geq 1}$.

{\bf Tightness of $(A^n(\varphi))_{n\geq 1}$.} Since $|A^{n}_t(\varphi)|\leq |M^{n}_t(\varphi)|$ for all $n\geq 1$ and $0\leq t\leq T$, and $(M^{n}(\varphi))_{n \geq 1}$ satisfies \eqref{Def:aldous_condi_1}, we have that $ (A^{n}(\varphi))_{n \geq 1}$ satisfies \eqref{Def:aldous_condi_1} as well. 

To show that tightness of $(A^n(\varphi))_{n\geq 1}$, it remains to show that $(A^n(\varphi))_{n\geq 1}$ satisfies the condition \eqref{Def:aldous_condi_2}. To that end, take stopping times $(\tau,\tau')\in St_{\delta}$, and note that
\begin{eqnarray*}
\P(|A^n_{\tau'}(\varphi)-A^n_{\tau}(\varphi)|>\epsilon_1)&\leq &\P(|M^n_{\tau'}(\varphi)(e^{-\alpha(\tau'-\tau)}-1)|>\epsilon_1/2)\\
&&+\P(|M^n_{\tau'}(\varphi)-M^n_{\tau}(\varphi)|>\epsilon_1/2)\\
&\leq &\frac{4}{\epsilon^2_1}\E\left((M^n_{\tau'}(\varphi))^2(e^{-\alpha(\tau'-\tau)}-1)^2\right)\\
&&+\P(|M^n_{\tau'}(\varphi)-M^n_{\tau}(\varphi)|>\epsilon_1/2)\\
&\leq & \frac{4(\alpha\delta)^2}{\epsilon^2_1}\E\left((M^n_{\tau'}(\varphi))^2\right)+\P(|M^n_{\tau'}(\varphi)-M^n_{\tau}(\varphi)|>\epsilon_1/2)\\
&\leq &\frac{4(\alpha\delta)^2}{\epsilon^2_1}\E\left((M^n_{T}(\varphi))^2\right)+\P(|M^n_{\tau'}(\varphi)-M^n_{\tau}(\varphi)|>\epsilon_1/2).
\end{eqnarray*}
Now, we can proceed as in the proof of the tightness of $(M^n(\varphi))_{n\geq 1}$ to conclude that $(A^n(\varphi))_{n\geq 1}$ satisfies the condition \eqref{Def:aldous_condi_2}, thus establishing the tightness of $(A^n(\varphi))_{n\geq 1}.$ 

{\bf Tightness of $(B^n(\varphi))_{n\geq 1}$.}  Fix $0\leq t\leq T$ and $\epsilon>0$. By using Markov inequality and then Jensen inequality, we have that for all $n\geq 1$ and $K>0$, (recall the definition of $B^n_t(\varphi)$),
$$
\P(|B^n_t(\varphi)|>K)\leq \frac{1}{K^2}\frac{1}{n}\sum_{i=1}^n\varphi^2(x_i)\E(\left (B^i_t)^2 \right).
$$
The inequality above and \eqref{Control_on_Bi_t} yield
$$
\P(|B^n_t(\varphi)|>K)\leq \frac{ L}{K^2}\sup_{n\geq 1} \sup_{0\leq s\leq T}\max_{j\in [n]} \E\left((\eta^j_s)^2\right).
$$
Since $ \sup_{n\geq 1} \sup_{0\leq s\leq T}\max_{j\in [n]}\E\left((\eta^j_s)^2\right)$ is finite by Proposition \ref{Prop:upper_bound_eta_it_square}, we deduce from inequality above that condition \eqref{Def:aldous_condi_1} holds.

We will now check that condition \eqref{Def:aldous_condi_2} holds as well.  
In the sequel, let $\Delta^j_s(f)=f(U^j_s)-f(u(s,x_j))$ for $1\leq j\leq n$, $0\leq s\leq T$ and $x_j\in [0,1].$
Take stopping times $(\tau,\tau')\in St_{\delta}$, and note that
\begin{multline}
\E |B^n_{\tau'}(\varphi)-B^n_{\tau}(\varphi)|\leq \E\left|\frac{1}{n}\sum_{i=1}^n \varphi(x_i)\frac{1}{n^{1/2}}\sum_{j=1}^n\int_{\tau}^{\tau'}e^{-\alpha(\tau'-s)}w(x_j,x_i)\Delta^j_s(f)ds\right| \\
+ \E\left|\frac{1}{n}\sum_{i=1}^n \varphi(x_i)\frac{1}{n^{1/2}}\sum_{j=1}^n\int_{0}^{\tau}(e^{-\alpha(\tau-s)}-e^{-\alpha(\tau'-s)})w(x_j,x_i)\Delta^j_s(f)ds\right|.
\end{multline}
Since $w$ and $\varphi$ are bounded functions, $f$ is Lipschitz continuous and $|e^{-x}-e^{-y}|\leq |x-y|$ for all $x,y\geq 0$, we have that
\begin{multline*}
  \E\left|\frac{1}{n}\sum_{i=1}^n \varphi(x_i)\frac{1}{n^{1/2}}\sum_{j=1}^n\int_{0}^{\tau}(e^{-\alpha(\tau-s)}-e^{-\alpha(\tau'-s)})w(x_j,x_i)\Delta^j_s(f)ds\right| \\
  \leq \alpha\|w\|_0\|\varphi\|_0L_{f}\frac{1}{n}\sum_{j=1}^n\E\int_{0}^{\tau}|\tau'-\tau||\eta^j_s|ds \leq \delta L \int_{0}^{T} \frac{1}{n} \sum_{j=1}^n\E|\eta^j_s|ds
\end{multline*}
so that Proposition \ref{Prop:upper_bound_eta_it_square} implies that
\begin{equation}
\label{Control_second_term}
\sup_{n\geq 1}\sup_{(\tau',\tau)\in St_{\delta}}\E\left|\frac{1}{n}\sum_{i=1}^n \varphi(x_i)\frac{1}{n^{1/2}}\sum_{j=1}^n\int_{0}^{\tau}(e^{-\alpha(\tau-s)}-e^{-\alpha(\tau'-s)})w(x_j,x_i)\Delta^j_s(f)ds\right|\leq L\delta.    
\end{equation}
Similarly, one can check that
\begin{multline}
\label{Control_first_term}
\E\left|\frac{1}{n}\sum_{i=1}^n \varphi(x_i)\frac{1}{n^{1/2}}\sum_{j=1}^n\int_{\tau}^{\tau'}e^{-\alpha(\tau'-s)}w(x_j,x_i)\Delta^j_s(f)ds\right|\\ \leq \|f'\|_{0}\|w\|_0\|\varphi\|_0\frac{1}{n}\sum_{j=1}^n\E\int_{\tau}^{\tau'}e^{-\alpha(\tau'-s)}|\eta^j_s|ds.    
\end{multline}
By applying Young's inequality we have that for all $\xi>0$, 
$$
e^{-\alpha(\tau'-s)}|\eta^j_s|\leq  \frac{1}{2\xi}e^{-2\alpha(\tau'-s)}+\frac{\xi}{2} |\eta^j_s|^2, 
$$
so that 
\begin{equation*}
\E\int_{\tau}^{\tau'}e^{-\alpha(\tau'-s)}|\eta^j_s|ds\leq \frac{1}{2\xi}\E\int_{\tau}^{\tau+\delta}e^{-2\alpha(\tau'-s)}ds+ \frac{\xi}{2} \int_{0}^{T}\E|\eta^j_s|^2ds \leq \frac{\delta}{2\xi}+ \frac{\xi}{2} \int_{0}^{T}\E|\eta^j_s|^2ds. 
\end{equation*}
As a consequence, by taking $\xi=\sqrt{\delta}$, it follows from \eqref{Control_first_term} and inequality above that
\begin{multline*}
\E\left|\frac{1}{n}\sum_{i=1}^n \varphi(x_i)\frac{1}{n^{1/2}}\sum_{j=1}^n\int_{\tau}^{\tau'}e^{-\alpha(\tau'-s)}w(x_j,x_i)\Delta^j_s(f)ds\right|\leq \sqrt{\delta}L(1+\int_{0}^{T}\sup_{n\geq 1}\frac{1}{n}\sum_{j=1}^n\E|\eta^j_s|^2ds).    
\end{multline*}
By Proposition \eqref{Prop:upper_bound_eta_it_square}, inequality above and \eqref{Control_second_term}, it follows then that
$$
\sup_{n\geq 1}\sup_{(\tau',\tau)\in St_{\delta}}\E |B^n_{\tau'}(\varphi)-B^n_{\tau}(\varphi)|\leq L(\delta +\sqrt{\delta}),$$
proving that \eqref{Def:aldous_condi_2} holds.

{\bf Proof of \eqref{control_of_C_phi}}. In the proof of Proposition \eqref{Prop:upper_bound_eta_it_square}, it has been proved that for any $0\leq t\leq T$ and $1\leq i\leq n$,
$$
|C^i_t|\leq \frac{1}{2n^{1/2}}\max_{j\in [n]}\sup_{0\leq s\leq t,y\in [0,1]}\left | \partial_1w(y,x_j)f(u(s,y)) +w(y,x)f'(u(s,y))\partial_2 u(s,y)\right |,
$$
so that 
$$
\sup_{0\leq t\leq T}|C^n_t(\varphi)|\leq \frac{\|\varphi\|_0}{2n^{1/2}}\max_{j\in [n]}\sup_{0\leq s\leq T,y\in [0,1]}\left | \partial_1w(y,x_j)f(u(s,y)) +w(y,x)f'(u(s,y))\partial_2 u(s,y)\right |,
$$
and the result follows.
\end{proof}

Since $(A^n(\varphi))_{n\geq 1}$ is càdlàg tight, $(B^n(\varphi))_{n\geq 1}$ is continuous tight and $(C^n(\varphi))$ goes to $0$, the following result holds (see \cite[Corollary VI.3.33.]{jacod1979calcul}). Furthermore, Corollary \ref{cor:tightness_of_Gamma_M_in_Sprime} is granted by \cite[Theorem 4.1]{Mitoma_89}.

\begin{corollary}\label{prop:tightness_of_Gamma:in:R}
Let us make the same assumptions as in Proposition \ref{prop:tightness_of_A_B_M:in:R}. Then, for each fixed $\varphi$ in $\cS$, the sequence of stochastic processes $(\Gamma^n(\varphi))_{n\geq 1}$ is tight in $\mathcal{D}([0,T],\bR)$.
\end{corollary}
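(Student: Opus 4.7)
The plan is to combine the decomposition \eqref{def:gamma_phi_decomposition}, namely $\Gamma^n_t(\varphi) = A^n_t(\varphi) + B^n_t(\varphi) + C^n_t(\varphi)$, with the three tightness/decay statements already established in Proposition \ref{prop:tightness_of_A_B_M:in:R}. Since tightness is preserved by sums of càdlàg tight sequences (provided at most one summand has genuine jumps, so that modulus-of-continuity arguments combine well), the corollary should follow essentially immediately, with the only genuine bookkeeping being that $C^n(\varphi)$ is not merely tight but converges uniformly to zero.

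More precisely, I would first observe that Proposition \ref{prop:tightness_of_A_B_M:in:R} gives me the tightness of $(A^n(\varphi))_{n\geq 1}$ in $\cD([0,T])$ and the tightness of $(B^n(\varphi))_{n\geq 1}$ in $\cC([0,T]) \subset \cD([0,T])$. Next, from the deterministic bound $\sup_{0\leq t\leq T}|C^n_t(\varphi)|\leq C n^{-1/2}\|\varphi\|_0$, I conclude that $C^n(\varphi)\to 0$ in $\cC([0,T])$ (hence in $\cD([0,T])$) uniformly in $\omega$. A deterministic sequence converging in $\cC([0,T])$ is automatically tight there.

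To finish, I would invoke \cite[Corollary VI.3.33]{jacod1979calcul}, according to which the sum of a tight sequence in $\cD([0,T])$ and a tight sequence in $\cC([0,T])$ is tight in $\cD([0,T])$: applying this first to $A^n(\varphi) + B^n(\varphi)$ and then adding the deterministic sequence $C^n(\varphi)$ (which, converging uniformly, adds no obstruction to either the compact-containment condition or Aldous' modulus estimate), I obtain the tightness of $(\Gamma^n(\varphi))_{n\geq 1}$ in $\cD([0,T],\bR)$. Since all the real work has already been carried out in Proposition \ref{prop:tightness_of_A_B_M:in:R}, I do not expect any serious obstacle here; the only point requiring a small amount of care is verifying that the two hypotheses of \cite[Corollary VI.3.33]{jacod1979calcul} are met with the right regularity (càdlàg versus continuous), which is precisely how the decomposition was designed.
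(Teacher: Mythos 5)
Your proposal is correct and matches the paper's proof exactly: both use the decomposition $\Gamma^n(\varphi)=A^n(\varphi)+B^n(\varphi)+C^n(\varphi)$, the tightness of $A^n(\varphi)$ (càdlàg) and $B^n(\varphi)$ (continuous) plus the uniform decay of $C^n(\varphi)$ from Proposition~\ref{prop:tightness_of_A_B_M:in:R}, and then invoke \cite[Corollary VI.3.33]{jacod1979calcul} to conclude. Nothing to add.
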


\begin{corollary}\label{cor:tightness_of_Gamma_M_in_Sprime}
Let us make the same assumptions as in Proposition \ref{prop:tightness_of_A_B_M:in:R}. Then, the sequences of the laws of $(\Gamma^n)_{n\geq 1}$ and $(M^n)_{n\geq 1}$ are tight in $\mathcal{D}([0,T],\cS')$.
\end{corollary}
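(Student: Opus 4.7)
The plan is to reduce the statement to a direct application of Mitoma's theorem \cite[Theorem 4.1]{Mitoma_89}, which asserts that a sequence $(X^n)_n$ of $\cS'$-valued càdlàg processes is tight in $\mathcal{D}([0,T],\cS')$ if and only if for every $\varphi\in\cS$ the sequence of real-valued processes $(X^n(\varphi))_n$ is tight in $\mathcal{D}([0,T],\bR)$. Thus the proof reduces to two separate verifications of the one-dimensional hypothesis, one for $\Gamma^n$ and one for $M^n$, all the serious work having already been carried out in the preceding results.

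For the process $\Gamma^n$, fix $\varphi\in\cS$. The tightness of $(\Gamma^n(\varphi))_n$ in $\mathcal{D}([0,T],\bR)$ is precisely the content of Corollary \ref{prop:tightness_of_Gamma:in:R}, obtained from the decomposition $\Gamma^n_t(\varphi) = A^n_t(\varphi) + B^n_t(\varphi) + C^n_t(\varphi)$ combined with the tightness of each summand established in Proposition \ref{prop:tightness_of_A_B_M:in:R}. Invoking Mitoma's theorem then yields tightness of $(\Gamma^n)_n$ in $\mathcal{D}([0,T],\cS')$.

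For the martingale sequence $M^n$, fix again $\varphi\in\cS$. The tightness of $(M^n(\varphi))_n$ in $\mathcal{D}([0,T],\bR)$ has been proven directly (via Aldous' criterion and the control of the angle bracket $\langle M^n(\varphi)\rangle$) as the first part of Proposition \ref{prop:tightness_of_A_B_M:in:R}. A second application of Mitoma's theorem then gives tightness of $(M^n)_n$ in $\mathcal{D}([0,T],\cS')$.

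The only non-trivial point to check before invoking Mitoma is that $\Gamma^n_t$ and $M^n_t$ do belong to $\cS'$, i.e.\ that $\varphi\mapsto \Gamma^n_t(\varphi)$ and $\varphi\mapsto M^n_t(\varphi)$ are continuous linear forms on $\cS$ (equipped with the Fréchet topology induced by the metric $d_\cS$ of \eqref{eq:def:distance:S}). Linearity is immediate from the defining formulas \eqref{def:gamma_n_t} and \eqref{def:Martinga_M_n_phi_t}, while continuity follows from the uniform bounds $|\Gamma^n_t(\varphi)|\leq n^{-1}\sum_i|\eta^i_t|\,\|\varphi\|_0$ and, for $M^n_t(\varphi)$, from the fact that its law has finite second moment controlled by $\|\varphi\|_0^2$ through \eqref{def:_angle_bracket_Martinga_M_n_phi_t}; in both cases the bound depends only on the semi-norm $\|\varphi\|_0$, so continuity holds for each fixed sample path. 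There is no real obstacle here: everything has been arranged in the previous sections so that the corollary is essentially a bookkeeping step.
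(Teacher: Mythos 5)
Your proposal matches the paper's own argument: the paper obtains tightness of $(\Gamma^n(\varphi))_n$ in $\cD([0,T],\bR)$ from Proposition~\ref{prop:tightness_of_A_B_M:in:R} via \cite[Corollary VI.3.33]{jacod1979calcul} (this is Corollary~\ref{prop:tightness_of_Gamma:in:R}) and then invokes Mitoma's criterion \cite[Theorem 4.1]{Mitoma_89} for both $(\Gamma^n)_n$ and $(M^n)_n$, exactly as you do. One small caveat on your final paragraph: to conclude that $\varphi\mapsto M^n_t(\varphi)$ is a continuous linear form \emph{for each fixed sample path}, a moment bound on $M^n_t(\varphi)$ (via the angle bracket \eqref{def:_angle_bracket_Martinga_M_n_phi_t}) is not enough; you should instead use the pathwise estimate
\[
|M^n_t(\varphi)| \le n^{-1/2}\sum_{j=1}^n e^{\alpha T}\|w\|_0\|\varphi\|_0\Bigl( N^j_T + \int_0^T f(U^j_s)\,ds\Bigr),
\]
which is almost surely finite and linear in $\|\varphi\|_0$, so $M^n_t\in\cS'$ almost surely. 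The rest of your argument is correct and amounts to the same bookkeeping the paper leaves implicit.
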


Furthermore, the limit trajectories of $(\Gamma^n)_{n\geq 1}$ and $(M^n)_{n\geq 1}$ are continuous as stated below.

\begin{proposition}\label{prop:continuity:limit:Gamma}
Suppose that $w$ is bounded. Then the limit points of $(\Gamma^n)_n$ and $(M^n)_{n\geq 1}$ are supported by $\mathcal{C}([0,T],\mathcal{S}')$.
\end{proposition}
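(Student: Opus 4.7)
The strategy is to reduce, via the same Mitoma-type projection principle invoked throughout Section \ref{sec:tightness}, to a one-dimensional statement for each fixed $\varphi \in \cS$, and then to apply the classical continuity criterion for $\cD([0,T],\bR)$: if a sequence $X^n$ converges in law in the Skorokhod topology to some $X$ and the maximal jump $J(X^n) := \sup_{t\leq T}|X^n_t - X^n_{t-}|$ tends to $0$ in probability, then $X$ almost surely has continuous sample paths (see e.g.\ \cite[Proposition VI.3.26]{jacod1979calcul}). In view of the tightness in $\cD([0,T],\cS')$ already established in Corollary \ref{cor:tightness_of_Gamma_M_in_Sprime}, the problem reduces to obtaining a vanishing bound on the jumps of $\Gamma^n(\varphi)$ and $M^n(\varphi)$ for each fixed $\varphi$.

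By the decomposition \eqref{def:gamma_phi_decomposition}--\eqref{def:A_B_C_phi}, the processes $B^n(\varphi)$ and $C^n(\varphi)$ are continuous in $t$, so the jumps of $\Gamma^n(\varphi)$ coincide with those of $A^n_t(\varphi) = e^{-\alpha t} M^n_t(\varphi)$. From \eqref{def:Martinga_M_n_phi_t}, the martingale $M^n(\varphi)$ jumps only at the spike times of $N^1, \ldots, N^n$, and at a spike time $t$ of $N^j$ one reads off
\begin{equation*}
    \Delta M^n_t(\varphi) = n^{-1/2} e^{\alpha t} I^n[\varphi](x_j), \qquad \Delta A^n_t(\varphi) = n^{-1/2} I^n[\varphi](x_j).
\end{equation*}
Since $|I^n[\varphi](y)| \leq \|w\|_0 \|\varphi\|_0$ uniformly in $y \in [0,1]$ whenever $w$ is bounded, this yields the \emph{deterministic} bounds
\begin{equation*}
    \sup_{t\leq T} |\Delta M^n_t(\varphi)| \leq \frac{e^{\alpha T} \|w\|_0 \|\varphi\|_0}{\sqrt{n}}, \qquad \sup_{t\leq T} |\Delta \Gamma^n_t(\varphi)| \leq \frac{\|w\|_0 \|\varphi\|_0}{\sqrt{n}},
\end{equation*}
both of which vanish as $n \to \infty$.

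Combining these estimates with the criterion above, every subsequential limit of $\Gamma^n(\varphi)$ (resp.\ $M^n(\varphi)$) in $\cD([0,T],\bR)$ has, almost surely, continuous sample paths. Since $\cS$ is a separable Fréchet space, one fixes a countable dense subset $\mathcal{D}_0 \subset \cS$ and a single event of full probability on which continuity holds simultaneously for every $\varphi \in \mathcal{D}_0$. The only step that requires a modicum of care, and which I would regard as the main (minor) obstacle, is to extend continuity of the projections from $\mathcal{D}_0$ back to all of $\cS$: this follows from the linearity of the limiting element in $\varphi$, the density of $\mathcal{D}_0$, and the uniform second-moment bounds obtained in Section \ref{sec:estimates} (which provide the equicontinuity of $\varphi \mapsto \Gamma_t(\varphi)$ needed to pass to the limit along $\mathcal{D}_0$). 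This yields that the limiting trajectories indeed belong to $\cC([0,T],\cS')$.
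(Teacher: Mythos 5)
Your core computation is the same as the paper's (and is correct): the jumps of $\Gamma^n(\varphi)$ coincide with those of $A^n(\varphi)=e^{-\alpha t}M^n(\varphi)$, only one $N^j$ jumps at a time, and each jump has size at most $n^{-1/2}\|w\|_0\|\varphi\|_0$. But the route you take from there is both more roundabout than necessary and imprecisely justified at the key step. Observe that the bound $\sup_{t\leq T}|\Delta\Gamma^n_t(\varphi)|\leq n^{-1/2}\|w\|_0\|\varphi\|_0$ is already \emph{uniform} over any bounded set $B\subset\cS$, since $\sup_{\varphi\in B}\|\varphi\|_0<\infty$. The paper exploits this directly: it bounds $J_B(\Gamma^n)=\sup_{t\leq T}\sup_{\varphi\in B}|\Gamma^n_t(\varphi)-\Gamma^n_{t-}(\varphi)|\leq e^{\alpha T}n^{-1/2}\|w\|_0\sup_{\varphi\in B}\|\varphi\|_0$, and then applies \cite[Theorem 13.4]{Billing_Convergence} in the $\cS'$-strong-topology setting, which immediately gives that the limit points charge $\cC([0,T],\cS')$. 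No reduction to a countable dense subset and no gluing argument is required.

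By instead arguing one $\varphi$ at a time, you create two additional obligations that you do not fully discharge. First, you must pass from ``a.s.\ $\Gamma(\varphi)$ is continuous for each $\varphi$ in a countable dense set $\mathcal{D}_0$'' to ``a.s.\ $t\mapsto\Gamma_t$ is continuous in the $\cS'$-strong topology.'' Your proposed mechanism — uniform second-moment bounds from Section~\ref{sec:estimates} giving equicontinuity of $\varphi\mapsto\Gamma_t(\varphi)$ — is not the right tool: those bounds control $\E[\Gamma^n_t(\varphi)^2]$, not pathwise regularity of the limit. The correct argument is purely deterministic: since the limit path already lies in $\cD([0,T],\cS')$, the left limit $\Gamma_{t-}$ exists in $\cS'$ for each $t$; continuity of $\Gamma(\varphi)$ for $\varphi\in\mathcal{D}_0$ forces $\Gamma_{t-}(\varphi)=\Gamma_t(\varphi)$ on a dense set, and two elements of $\cS'$ agreeing on a dense subset of $\cS$ are equal. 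This gives strong continuity directly, but it is a genuinely different justification from the one you wrote. Second, your reduction to ``a vanishing bound on the jumps for each fixed $\varphi$'' invokes a ``Mitoma-type projection principle'' that is not the same as the one used in Section~\ref{sec:tightness}; Mitoma's theorem is a tightness criterion, not a statement that projected continuity implies continuity in $\cS'$. I recommend simply noting the uniformity of your bound over bounded sets and concluding as the paper does.
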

\begin{proof}
We only give here the proof for the sequence $(\Gamma^n)_n$ but the same argument can be applied to $(M^n)_n$. For completeness, let us mention that the result for $(M^n)_n$ is also a byproduct of Proposition \ref{prop:convergence:martingale}.

By \cite[Theorem 13.4.]{Billing_Convergence} it suffices to show that the maximum jump size of $\Gamma^n$ goes to $0$ in probability. This means that we need to prove that, for any bounded set $B\subset \cS$, the random variable $J_B(\Gamma^n)=\sup_{0\leq t\leq T }\sup_{\varphi\in B}|\Gamma^n_t(\varphi)-\Gamma^n_{t-}(\varphi)|$ converges in probability to $0$ as $n\to\infty.$ 
Recall that $B\subset \cS$ is bounded if $\sup_{\varphi\in B}\|\varphi\|_{k}<\infty$ for any $k\geq 0$ (which is different from boundedness with respect to $d_{\cS}$, see \cite[Definition 2.9]{simon2017banach}). In particular, we have $\sup_{\varphi\in B}\|\varphi\|_0<\infty.$ 
Now, observe that (see also proof of condition 2 in the proof of Proposition \ref{prop:convergence:martingale})
$$
\sup_{0\leq t\leq T}\sup_{\varphi\in B}|M^{n}_t(\varphi)-M^{n}_{t-}(\varphi)|\leq e^{\alpha T} \frac{\|w\|_0}{\sqrt{n}}\sup_{\varphi\in B}\|\varphi\|_0\sup_{t\leq T}\sum_{j=1}^n\Delta N^j_s,
$$
where $\Delta N^j_s=N^j_s-N^j_{s-}$ for each $j\in [n]$ and $s\geq 0$.
Almost surely for all $1\leq i,j\leq n$ with  $i\neq j$, the counting processes $N^j$ and $N^i$ never jump simultaneously, so that  
$$
\sup_{0\leq t\leq T}\sum_{j=1}^n\Delta N^j_s\leq 1 \ \mbox{almost surely,}
$$
and therefore almost surely
$$
\sup_{0\leq t\leq T}\sup_{\varphi\in B}|M^{n}_t(\varphi)-M^{n}_{t-}(\varphi)|\leq e^{\alpha T} \frac{\|w\|_0}{\sqrt{n}}\sup_{\varphi\in B}\|\varphi\|_0.
$$
Finally, since $\Gamma^n_t(\varphi)=e^{-t\alpha}M^n_t(\varphi)+B^n_t(\varphi)+C^n_t(\varphi)$ and both $B^n_t(\varphi)$ and $C^n_t(\varphi)$ are continuous functions of time, we deduce that $|\Gamma^n_t(\varphi)-\Gamma^n_{t-}(\varphi)|\leq |M^{n}_t(\varphi)-M^{n}_{t-}(\varphi)|$. Thus, it follows that almost surely,
$$
J_B(\Gamma^n)\leq e^{\alpha T} \frac{\|w\|_0}{\sqrt{n}}\sup_{\varphi\in B}\|\varphi\|_0,
$$
implying the result.
\end{proof}

Since the limit trajectories of $(M^n)_{n\geq 1}$ and $(\Gamma^n)_{n\geq 1}$ are continuous, we have the joint tightness \cite[Corollary VI.3.33]{Jacod_2003}.

\begin{corollary}\label{cor:joint:tightness}
Under assumptions of Proposition \ref{prop:tightness_of_A_B_M:in:R}, the sequence of the laws of $(\Gamma^n,M^n)_{n\geq 1}$ is tight in the space $\mathcal{D}([0,T],\mathcal{S}'\times \mathcal{S}')$ with limit points in $\mathcal{C}([0,T],\mathcal{S}'\times \mathcal{S}')$.
\end{corollary}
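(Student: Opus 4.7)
The plan is to combine the individual tightness results from Corollary \ref{cor:tightness_of_Gamma_M_in_Sprime} with the continuity of the limit trajectories established in Proposition \ref{prop:continuity:limit:Gamma}, and to invoke a standard principle on joint tightness of càdlàg processes with continuous limits.

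First, by Corollary \ref{cor:tightness_of_Gamma_M_in_Sprime}, the marginal sequences $(\Gamma^n)_{n\geq 1}$ and $(M^n)_{n\geq 1}$ are each tight in $\mathcal{D}([0,T],\mathcal{S}')$. Prohorov's theorem then yields tightness of the joint sequence $((\Gamma^n,M^n))_{n\geq 1}$ in the product space $\mathcal{D}([0,T],\mathcal{S}')\times \mathcal{D}([0,T],\mathcal{S}')$ equipped with the product topology. The subtlety is that this product topology is a priori strictly coarser than the Skorokhod topology on $\mathcal{D}([0,T],\mathcal{S}'\times \mathcal{S}')$, since the latter requires a single time change aligning both coordinates simultaneously, and this is the gap that must be closed.

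Second, this gap disappears when limit points concentrate on continuous paths. By Proposition \ref{prop:continuity:limit:Gamma}, the limit points of $(\Gamma^n)_n$ live in $\mathcal{C}([0,T],\mathcal{S}')$; the same conclusion applies to $(M^n)_n$ verbatim, because the jump bound
\begin{equation*}
\sup_{0\leq t\leq T}\sup_{\varphi\in B}|M^n_t(\varphi)-M^n_{t-}(\varphi)|\leq e^{\alpha T}\frac{\|w\|_0}{\sqrt{n}}\sup_{\varphi\in B}\|\varphi\|_0
\end{equation*}
that drives the proof of that proposition is obtained directly at the level of $M^n$ itself.

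Finally, these two ingredients are exactly the hypotheses of \cite[Corollary VI.3.33]{Jacod_2003}: marginal tightness of càdlàg processes together with continuity of the limit trajectories upgrades to joint tightness in the Skorokhod space of the vector-valued process, with limits living on continuous paths. Applying it here gives the tightness of $((\Gamma^n,M^n))_n$ in $\mathcal{D}([0,T],\mathcal{S}'\times\mathcal{S}')$ with limit points in $\mathcal{C}([0,T],\mathcal{S}'\times\mathcal{S}')$. The only (mild) obstacle is keeping straight the distinction between the product and Skorokhod topologies on càdlàg pairs; no new moment estimate or martingale computation beyond those already established is required.
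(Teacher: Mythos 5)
Your proof is correct and mirrors the paper's own argument exactly: combine the marginal tightness from Corollary~\ref{cor:tightness_of_Gamma_M_in_Sprime} with the C-tightness coming from Proposition~\ref{prop:continuity:limit:Gamma} (which, as you observe, applies to $M^n$ as well as $\Gamma^n$ via the same jump bound), and invoke \cite[Corollary VI.3.33]{Jacod_2003} to pass from the product topology to the Skorokhod topology on pairs. One minor misattribution: joint tightness in the product topology follows from the marginal tightnesses simply by taking products of compact sets, not from Prohorov's theorem, which goes in the other direction (from tightness to relative compactness).
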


\section{Limit equation}
\label{sec:limit_equation}

In this section we first show the convergence of the local martingale $(M^n)_{n\geq 1}$ in order to state the limit equation \eqref{eq:limit:equation:Gamma} satisfied by the limit points of $(\Gamma^n)_{n\geq 1}$.

\begin{definition}
\label{def:gaussian:process}
Let $M$ be a continuous centered Gaussian process with values in $\mathcal{S}'$ with covariance given, for all $\varphi_1$ and $\varphi_2$ in $\mathcal{S}$, for all $t_1$ and $t_2\geq 0$, by
\begin{equation}
    \E\left[M_{t_1}(\varphi_1)M_{t_2}(\varphi_2) \right] = \int_0^{t_1\wedge t_2} \int_{0}^1 e^{2\alpha s}  I[\varphi_1](y) I[\varphi_2](y) f(u(s,y)) dy ds,
\end{equation}
where for each $y\in [0,1]$,
\begin{equation*}
    I[\varphi](y)= \int_0^1 w(y,x)\varphi(x) dx.
\end{equation*}
\end{definition}

\begin{proposition}\label{prop:convergence:martingale}
Under assumptions of Proposition \ref{prop:tightness_of_A_B_M:in:R}, the sequence $(M^n)_{n\geq 1}$ of processes in
$\mathcal{D}(\bR_+,\mathcal{S}')$ converges in law to $M$ defined above.
\end{proposition}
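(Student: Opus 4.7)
The plan is to combine the tightness already established in Corollary~\ref{cor:tightness_of_Gamma_M_in_Sprime} with the identification of the finite-dimensional distributions of the limit, via a multivariate martingale central limit theorem applied to each fixed collection of test functions. More precisely, by Mitoma's theorem and the continuity of limit trajectories (Proposition~\ref{prop:continuity:limit:Gamma}), it suffices to show that for every finite family $\varphi_1,\ldots,\varphi_k\in\mathcal{S}$ and every $0\leq t_1<\cdots<t_m\leq T$, the random vector $(M^n_{t_\ell}(\varphi_j))_{\ell,j}$ converges in law to the corresponding Gaussian vector with covariance given by Definition~\ref{def:gaussian:process}. By Rebolledo's theorem (or Theorem VIII.3.11 in \cite{Jacod_2003}) applied to the continuous local martingale $M^n(\varphi) = \sum_j a_j M^n(\varphi_j)$ obtained by Cramér--Wold, this reduces to the following two conditions:
\begin{enumerate}
    \item[(i)] joint convergence in probability of the angle brackets $\langle M^n(\varphi_i), M^n(\varphi_j)\rangle_t$ towards the target covariance kernel;
    \item[(ii)] the Lindeberg condition on jumps, namely $\E\bigl[\sup_{s\leq T} |\Delta M^n_s(\varphi)|^2\bigr]\to 0$.
\end{enumerate}

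Condition (ii) is straightforward: since almost surely no two neurons jump simultaneously, a jump of $M^n(\varphi)$ at time $s$ is produced by a single neuron, so that
\begin{equation*}
    |\Delta M^n_s(\varphi)| \leq \frac{e^{\alpha T}}{\sqrt{n}}\|I^n[\varphi]\|_0 \leq \frac{e^{\alpha T}\|w\|_0\|\varphi\|_0}{\sqrt{n}},
\end{equation*}
which tends to $0$ uniformly. This also reproves the continuity of the limit trajectories of $M^n$ alluded to in Proposition~\ref{prop:continuity:limit:Gamma}.

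The main work lies in condition (i), starting from the explicit formula \eqref{def:_angle_bibracket_Martinga_M_n_phi_t}. I would handle the three approximations successively. First, replacing $I^n[\varphi_i]$ by $I[\varphi_i]$: since $w$ and $\varphi_i$ are smooth, $|I^n[\varphi_i](y)-I[\varphi_i](y)|\leq C/n$ uniformly in $y$ by the classical Riemann sum bound, so the contribution of this replacement is $O(1/n)$. Second, replacing $f(U^j_s)$ by $f(u(s,x_j))$: the Lipschitz property of $f$ together with Proposition~\ref{Prop:upper_bound_eta_it_square} yields
\begin{equation*}
    \E\bigl|f(U^j_s)-f(u(s,x_j))\bigr| \leq \|f'\|_0 \E[|\eta^j_s|]/\sqrt{n} \leq C/\sqrt{n},
\end{equation*}
uniformly in $j,s$, so this replacement costs $O(1/\sqrt{n})$ in $L^1$. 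Third, recognizing the empirical Riemann sum
$\frac{1}{n}\sum_j H(s,x_j)$ with $H(s,y) = e^{2\alpha s} I[\varphi_i](y) I[\varphi_j](y) f(u(s,y))$ as a discretization of $\int_0^1 H(s,y)\,dy$: by Proposition~\ref{prop:contraction:NFE} the map $y\mapsto H(s,y)$ is smooth, uniformly in $s\in[0,T]$, so this last error is again $O(1/n)$ after integration in $s$. Summing the three estimates and invoking dominated convergence gives $L^1$ (hence in probability) convergence of $\langle M^n(\varphi_i),M^n(\varphi_j)\rangle_t$ to $\int_0^t\int_0^1 e^{2\alpha s}I[\varphi_i](y)I[\varphi_j](y)f(u(s,y))\,dy\,ds$, for every fixed $t\in[0,T]$.

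With (i) and (ii) in hand, the multivariate martingale CLT gives convergence in law of the finite-dimensional distributions of $(M^n(\varphi_1),\ldots,M^n(\varphi_k))$ to a centered continuous Gaussian process whose covariance matches that of $(M(\varphi_1),\ldots,M(\varphi_k))$. Combining this identification with the tightness in $\mathcal{D}([0,T],\mathcal{S}')$ from Corollary~\ref{cor:tightness_of_Gamma_M_in_Sprime} and the continuity of limit trajectories yields convergence in law of $M^n$ to $M$ in $\mathcal{D}([0,T],\mathcal{S}')$; since $T$ is arbitrary, convergence in $\mathcal{D}(\mathbb{R}_+,\mathcal{S}')$ follows. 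The main obstacle is the angle-bracket computation of step (i); the three approximations are individually easy but must be tracked simultaneously so that the limit has the precise quadratic form announced in Definition~\ref{def:gaussian:process}.
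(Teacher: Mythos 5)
Your proof takes essentially the same route as the paper: Mitoma's theorem and tightness reduce the problem to finite-dimensional convergence, Cramér--Wold reduces to a scalar local martingale, Rebolledo's martingale CLT is invoked, and the angle-bracket limit is identified via the same three successive approximations (Riemann bound for $I^n\to I$, Lipschitz bound and Proposition~\ref{Prop:upper_bound_eta_it_square} for $f(U^j_s)\to f(u(s,x_j))$, and a Riemann sum for the spatial integral), while the jump condition follows from the non-simultaneity of spikes exactly as in the paper. One terminological slip: $M^n(\varphi)=\sum_j a_j M^n(\varphi_j)$ is a pure-jump (càdlàg) local martingale, not a continuous one; what you show, and what Rebolledo requires, is that its jumps become uniformly negligible, not that they vanish for finite $n$.
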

\begin{proof}
By Corollary \ref{cor:tightness_of_Gamma_M_in_Sprime} and \cite[Theorem 5.3]{Mitoma_89}, it suffices to show that for $0\leq t_1\leq t_2\leq \ldots\leq t_m\leq T$ and $\varphi_1,\ldots,\varphi_m\in \cS$, the sequence of random vectors $(M^n_{t_1}(\varphi_1),\ldots, M^n_{t_m}(\varphi_m))_{n\geq 1}$ converges in law to a Gaussian random vector $\cN(0,\Sigma)$ with covariance matrix $\Sigma=(\Sigma_{ij})_{1\leq i,j\leq m}$ given by
$$
(\Sigma)_{ij}=\int_0^{t_i\wedge t_j} \int_{0}^1 e^{2\alpha s} I[\varphi_i](y) I[\varphi_j](y) f(u(s,y)) dy ds.
$$
To show this convergence holds, by Cramér--Wold Theorem, it is enough to show that for all $\beta=(\beta_1,\ldots,\beta_m)$, the sequence of random variables $(\sum_{p=1}^m\beta_pM^n_{t_p}(\varphi_p))_{n\geq 1}$ converges in law to a Gaussian random variable $\cN(0,\sigma^2)$ with variance $\sigma^2=\beta^{T}\Sigma\beta.$
To that end, we will resort to the central limit theorem for local martingales from \cite{Rebolledo_80}.

In what follows, for each $n\geq 1$, let $(\tilde{M}^n_t)_{t\geq 0}$ be the local martingales defined by
$$
\tilde{M}^n_t=\sum_{p=1}^m\beta_pM^n_{t_p\wedge t}(\varphi_p).
$$
Its angle bracket can be written in the following form:
$$
\langle \tilde{M}^n \rangle_t=\sum_{1\leq p,q\leq m}\beta_p\beta_q \langle M^n(\varphi_p), M^n(\varphi_q) \rangle_{(t_p\wedge t_q) \wedge t}. 
$$
 According to \cite{Rebolledo_80}, the sequence $(\tilde{M}^n)_{n\geq 1}$ converges in law to a centered Gaussian process $\tilde{M}$ with covariance function $\mCov(\tilde{M}_t,\tilde{M}_s)=C(t\wedge s)$, if the following conditions are verified:
 \begin{enumerate}
     \item $\langle \tilde{M}^n \rangle_t$ converges to $C(t)$ in probability as $n\to\infty$, for each $t>0.$
     \item For each $\epsilon>0$ and $t>0$, the sequence of random variables
     $$
     \sum_{s\leq t}|\Delta \tilde{M}^n_s|1_{\{|\Delta \tilde{M}^n_s|>\epsilon\}},
     $$
     converges to $0$ in probability as $n\to\infty.$
 \end{enumerate}
Let us assume that these two conditions have been checked with $(C(t))_{t\geq 0}$ such that $C(T)=\sigma^2$. In that case,  we would have that $\sum_{p=1}^m \beta_p M^n_{t_p}(\varphi_p)=\tilde{M}^n_T$ converges to $\cN(0,\sigma^2)$ in law as $n\to\infty$, concluding the proof of the proposition. 
In the remaining part of the proof, we will check conditions 1 and 2 above are satisfied with $(C(t))_{t\geq 0}$ such that $C(T)=\sigma^2.$

{\bf Proof of condition 1.} For each $t\geq 0$, write 
$$
C(t)=\sum_{1\leq p,q\leq m}\beta_p\beta_q\int_{0}^{(t_q\wedge t_p)\wedge t}I[\varphi_p](y)I[\varphi_q](y)e^{2\alpha s}f(u(s,y))dyds,
$$
and for each $n\geq 1$, $y\in [0,1]$ and $\varphi\in \cS$, let
$$
I^n[\varphi](y)=\frac{1}{n}\sum_{i=1}^n\varphi(x_i)w(y,x_i).
$$
Observe that $C(T)=\sigma^2$ and $\|I^n[\varphi]\|_0\leq \|\varphi\|_0\|w\|_0$. Now, from the Riemann sum approximation, we also have that
$$
\|I^n[\varphi]-I[\varphi]\|_0\leq \frac{1}{2n}\sup_{x,y\in [0,1]}|\varphi'(x)w(y,x)+\varphi(x)\partial_2w(y,x)|.
$$
Moreover, with this notation, it follows from \eqref{def:_angle_bibracket_Martinga_M_n_phi_t} that
$$
\langle \tilde{M}^n \rangle_t=\sum_{1\leq p,q\leq m}\beta_q\beta_p\frac{1}{n}\sum_{j=1}^n\int_{0}^{t\wedge(t_p\wedge t_q)}I^n[\varphi_p](x_j)I^n[\varphi_q](x_j)e^{2\alpha s}f(U^j_s)ds, 
$$
so that 
\begin{multline}
\label{difference_angle_Mtilde_C_t}
\E|\langle \tilde{M}^n \rangle_t-C(t)|\leq \sum_{1\leq p,q\leq m}\beta_q\beta_p\E\left|\frac{1}{n}\sum_{j=1}^n\int_{0}^{t\wedge(t_p\wedge t_q)} I^n[\varphi_p](x_j)I^n[\varphi_q](x_j)e^{2\alpha s}f(U^j_s)ds\right.\\\left.
-\int_{0}^{t\wedge(t_p\wedge t_q)}\int_{0}^1I[\varphi_p](y)I[\varphi_q](y)e^{2\alpha s}f(u(s,y))dyds\right|.
    \end{multline}
Now, for any $1\leq p,q\leq m$, one can check that
\begin{multline}
\label{proof_cond_1:ineq_1}
\left|\frac{1}{n}\sum_{j=1}^n(I^n[\varphi_p](x_j)I^n[\varphi_q](x_j)-I[\varphi_p](x_j)I[\varphi_q](x_j))\right|\\ \leq \frac{\|w\|_0}{n}\max_{u,v\in\{p,q\}}\left\{\|\varphi_v\|_0\sup_{x,y\in [0,1]}|\frac{\partial}{\partial x}(\varphi_u(x)w(y,x))|\right\}.    
\end{multline}
Similarly, from Riemann sum approximation, we have for any $s\leq t$ fixed, 
\begin{multline}
\label{proof_cond_1:ineq_2}
\left|\frac{1}{n}\sum_{j=1}^nI[\varphi_p](x_j)I[\varphi_q](x_j)f(u(s,x_j))-\int_{0}^1I[\varphi_p](y)I[\varphi_q](y)f(u(s,y))dy\right|\\ \leq \frac{1}{2n}\sup_{y\in [0,1],h\leq t}\left|\frac{d}{dy}(I[\varphi_p](y)I[\varphi_q](y)f(u(h,y)))\right|.    
\end{multline}
The local boundedness of both $f$ and $u$ implies $\sup_{t\leq T}\|f(u(h,\cdot)\|_0<\infty$. Combining this fact with
the boundedness of $f^(1)$, and the smoothness of both $\varphi_p$ and $\varphi_q$, one can show that
$$
\sup_{y\in [0,1],h\leq T}\left|\frac{d}{dy}(I[\varphi_p](y)I[\varphi_q](y)f(u(h,y)))\right|<\infty$$
Furthermore, we have that
\begin{multline}
\label{proof_cond_1:ineq_3}
\E\left|\frac{1}{n}\sum_{j=1}^n\int_{0}^{t\wedge(t_p\wedge t_q)}e^{2\alpha 2}I[\varphi_p](x_j)I[\varphi_q](x_j)(f(U^j_s)-f(u(s,x_j)))ds \right|\\ \leq \|f'\|_{0} \frac{\|I[\varphi_p]\|_0\|I[\varphi_q]\|_0}{n^{1/2}}\int_{0}^{t} e^{2\alpha s}\sup_{n\geq 1}\frac{1}{n}\sum_{j=1}^n\E|\eta^j_s|ds. \end{multline}
Combining the inequalities \eqref{proof_cond_1:ineq_1}, \eqref{proof_cond_1:ineq_2} and \eqref{proof_cond_1:ineq_3} with \eqref {difference_angle_Mtilde_C_t}, and using that the function $s\mapsto \sup_{n\geq 1}\frac{1}{n}\sum_{j=1}^n\E|\eta^j_s|$ is locally bounded,  we then have that there exists a constant $C$ not depending on $n$ such that for all $n$ sufficiently large
$$
\E|\langle \tilde{M}^n \rangle_t-C(t)|\leq C\frac{1}{n^{1/2}}.
$$
The proof of condition 1 follows now from Markov inequality.

{\bf Proof of condition 2.} It is enough to prove that there exists a positive constant $C$ not depending on $n\geq 1$ such that for each $t>0$ and $n\geq 1$, 
$$
\sup_{0\leq s\leq t}|\Delta\tilde{M}^n_s|\leq \frac{C}{n^{1/2}} \ \mbox{almost surely.}
$$
To prove that, observe that for each $s\geq 0$, 
$$
|\Delta\tilde{M}^n_s|\leq m \max_{1\leq p\leq m}\{\|\varphi_p\|_0\}\|w\|_0\frac{1}{n^{1/2}}\sum_{j=1}^n\Delta N^j_s,
$$
implying that (with $C=m \max_{1\leq p\leq m}\{\|\varphi_p\|_0\}\|w\|_0$)
$$
\sup_{0\leq s\leq t}|\Delta\tilde{M}^n_s|\leq \frac{C}{n^{1/2}}\sup_{0\leq s\leq t}\sum_{j=1}^n\Delta N^j_s.
$$
Since almost surely for all $1\leq i,j\leq n$ with  $i\neq j$, the counting processes $N^j$ and $N^i$ never jump simultaneously, it follows that  
$$
\sup_{0\leq s\leq t}\sum_{j=1}^n\Delta N^j_s\leq 1 \ \mbox{almost surely,}
$$
and the result follows.
\end{proof}

We are now in position to state the limit equation satisfied by the limit points, generically denoted by $\Gamma$, of the sequence $(\Gamma^n)_n$.
The limit equation is:

\begin{equation}
\label{eq:limit:equation:Gamma}
    \forall \varphi \in \mathcal{S}, \quad 
\Gamma_t(\varphi)=e^{-\alpha t}M_t(\varphi)+\int_{0}^{t}e^{-\alpha(t-s)}\Gamma_s\left(\int_{0}^1\varphi(x)w(\cdot,x)f'(u(s,\cdot))dx\right)ds,
\end{equation}
where $M$ is the Gaussian process of Definition \ref{def:gaussian:process}.

For notational simplicity, let us introduce the following maps $F_{\varphi}:\mathcal{D}([0,T],\mathcal{S}'\times \mathcal{S}')\to \mathcal{D}([0,T],\bR)$ defined, for all $(g_t,m_t)_{t\in [0,T]}$ in $\mathcal{D}([0,T],\mathcal{S}'\times\mathcal{S}')$, by
\begin{equation*}
    F_{\varphi}(g,m)_t = g_t(\varphi) - e^{-\alpha t} m_t(\varphi) - \int_{0}^{t} e^{-\alpha(t-s)} g_s \left(\int_{0}^1\varphi(x)w(\cdot,x)f'(u(s,\cdot))dx\right)ds,
\end{equation*}
so that \eqref{eq:limit:equation:Gamma} is equivalent to : for all $\varphi\in \cS$, $F_{\varphi}(\Gamma,M)=0$.

\begin{remark}
Assumption \ref{ass:f} ensures that the function $y\mapsto \int_0^1\varphi(x)w(y,x)f'(u(s,y))dx$ is in $S$ so that the RHS of \eqref{eq:limit:equation:Gamma} is well defined.

About the space $\mathcal{D}([0,T],\mathcal{S}'\times \mathcal{S}')$, the product $\mathcal{S}'\times \mathcal{S}'$ is endowed with the product topology of the strong topology on $\mathcal{S}'$. Then, $\mathcal{D}([0,T],\mathcal{S}'\times \mathcal{S}')$ is endowed with some projective topology inspired by Skorokhod topology (see \ref{app:frechet} for more insight). Finally, the maps $F_{\varphi}$ are continuous with respect to that topology.
\end{remark}

\begin{proposition}
\label{prop:limit:law:limit:equation}
Under Assumption \ref{ass:f}, for all $\varphi\in \cS$, $F_{\varphi}(\Gamma^n,M^n)\to 0$ in probability.
\end{proposition}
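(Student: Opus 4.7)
The plan is to exploit the explicit decomposition $\Gamma^n_t(\varphi)=e^{-\alpha t}M^n_t(\varphi)+B^n_t(\varphi)+C^n_t(\varphi)$ from \eqref{def:gamma_phi_decomposition}. Substituting it into $F_\varphi(\Gamma^n,M^n)_t$ makes the martingale part cancel, leaving
\begin{equation*}
    F_\varphi(\Gamma^n,M^n)_t = B^n_t(\varphi) + C^n_t(\varphi) - \int_0^t e^{-\alpha(t-s)} \Gamma^n_s(\psi_s)\,ds,
\end{equation*}
where $\psi_s(y) := f'(u(s,y))\int_0^1 \varphi(x) w(y,x)\,dx$. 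Since the map $F_\varphi$ is continuous on $\cD([0,T],\mathcal{S}'\times\mathcal{S}')$, and since the candidate limit $0$ is continuous in time, it suffices to show $\sup_{t\leq T}|F_\varphi(\Gamma^n,M^n)_t|\to 0$ in probability. The term $C^n_t(\varphi)$ is handled for free by \eqref{control_of_C_phi}: it is $O(n^{-1/2})$ uniformly in $t$.

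The heart of the proof is to show that $B^n_t(\varphi)$ is close to $\int_0^t e^{-\alpha(t-s)}\Gamma^n_s(\psi_s)\,ds$. I would write a second-order Taylor expansion of $f$ around $u(s,x_j)$: since $U^j_s - u(s,x_j)=n^{-1/2}\eta^j_s$,
\begin{equation*}
    f(U^j_s)-f(u(s,x_j)) = n^{-1/2} f'(u(s,x_j))\eta^j_s + \tfrac{1}{2}n^{-1} f''(\xi^{n,j}_s)(\eta^j_s)^2,
\end{equation*}
for some $\xi^{n,j}_s$. Plugging this into the definition of $B^n_t(\varphi)$ and swapping the $i$- and $j$-sums splits $B^n_t(\varphi)$ as a ``linear part'' of the form $\int_0^t e^{-\alpha(t-s)}\frac{1}{n}\sum_j \eta^j_s f'(u(s,x_j))\bigl[\frac{1}{n}\sum_i\varphi(x_i)w(x_j,x_i)\bigr]ds$, plus a ``quadratic remainder'' $R^n_t(\varphi)$ containing $f''(\xi^{n,j}_s)(\eta^j_s)^2$ and carrying a prefactor $n^{-1/2}$.

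Comparing the linear part with $\int_0^t e^{-\alpha(t-s)}\Gamma^n_s(\psi_s)\,ds$ amounts to replacing the inner Riemann sum $\frac{1}{n}\sum_i \varphi(x_i)w(x_j,x_i)$ by the integral $\int_0^1 \varphi(x)w(x_j,x)\,dx$. Under Assumption \ref{ass:f}, $\varphi w(x_j,\cdot)$ is smooth with derivative bounded uniformly in $x_j$, so this error is $O(n^{-1})$ uniformly in $(s,j)$. Multiplying by $f'(u(s,x_j))$ (bounded) and $\eta^j_s$ and using $\sup_n \sup_{s\leq T}\max_j \mathbb{E}[|\eta^j_s|]<\infty$ (Proposition \ref{Prop:upper_bound_eta_it_square} via Jensen) bounds the expectation of the supremum over $t$ of this difference by $C/n$. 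For the remainder $R^n_t(\varphi)$, I would use $\|f^{(2)}\|_0<\infty$, $\|w\|_0<\infty$, $\|\varphi\|_0<\infty$, and the uniform bound on $\E[(\eta^j_s)^2]$ to obtain $\E\sup_{t\leq T}|R^n_t(\varphi)|\leq C n^{-1/2}$.

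Collecting the three bounds yields $\E\sup_{t\leq T} |F_\varphi(\Gamma^n,M^n)_t|\leq Cn^{-1/2}$, which gives convergence in $L^1$ and hence in probability, as required. The main obstacle is organising the double sum so that the Riemann-sum error can be taken out uniformly in the random index $j$, without blowing up upon summing over $j$; this is why the smoothness of $w$ in its \emph{second} variable (to control $\partial_2 w$ uniformly in the first) is essential, and why one works with an $L^1$ estimate on the $\eta^j_s$ rather than trying to control each individual increment pathwise.
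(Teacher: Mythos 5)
Your proof is correct and follows essentially the same strategy as the paper's: cancel the martingale part via the decomposition \eqref{def:gamma_phi_decomposition}, dispose of $C^n(\varphi)$ by \eqref{control_of_C_phi}, then split $B^n_t(\varphi)-\int_0^t e^{-\alpha(t-s)}\Gamma^n_s(\psi_s)\,ds$ into a Riemann-sum error and a second-order Taylor remainder, controlling the first via the $L^1$ bound on $\eta^j_s$ and the second via the $L^2$ bound together with $\|f''\|_0<\infty$ (both from Proposition \ref{Prop:upper_bound_eta_it_square}). The paper performs the add-and-subtract slightly differently — its $I^n$ carries the full increment $\Delta^j_s(f)$ against the Riemann error while your version carries the linearized piece $f'(u(s,x_j))\eta^j_s/\sqrt{n}$, and symmetrically for the Taylor remainder — but this is purely cosmetic and yields the same $O(n^{-1})$ and $O(n^{-1/2})$ rates. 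Two small notational slips: the summation index $j$ is not random (the Riemann error $\sup_j|\tfrac1n\sum_i\varphi(x_i)w(x_j,x_i)-\int\varphi\,w(x_j,\cdot)|$ is deterministic), so no uniformity-over-a-random-index issue arises; and invoking continuity of $F_\varphi$ is unnecessary here since you prove the stronger statement that $\sup_{t\le T}|F_\varphi(\Gamma^n,M^n)_t|\to 0$ in probability directly. Neither affects the validity of the argument.
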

\begin{proof}
Observe that for each $t\geq 0,$
$$
F_{\varphi}(\Gamma^n,M^n)_t=B^n_t(\varphi)+C^n_t(\varphi)-\frac{1}{n}\sum_{j=1}^n\int_{0}^1\varphi(x)w(x_j,x)dx\int_{0}^t e^{-\alpha(t-s)}\eta^j_sf'(u(s,x_j))ds.
$$
Recall that $\sup_{t\leq T}|C^n_t(\varphi)|\leq C\|\varphi\|_{\infty}n^{-1/2}$ by Proposition \ref{prop:tightness_of_A_B_M:in:R}. Thus given $\epsilon>0$, we have that 
$\sup_{t\leq T}|C^n_t(\varphi)|\leq \epsilon/2$ for all $n\geq (2C\|\varphi\|_0\epsilon^{-1})^2,$
so that the event $\left\{\sup_{t\leq T}|F_{\varphi}(\Gamma^n,M^n)_t|>\epsilon\right\}$ is contained in event
$$
\left\{\sup_{t\leq T}\left|B^n_t(\varphi)-\frac{1}{n}\sum_{j=1}^n\int_{0}^1\varphi(x)w(x_j,x)dx \int_{0}^t e^{-\alpha(t-s)}\eta^j_sf'(u(s,x_j))ds\right|>\epsilon/2\right\}.
$$
We will show in the remaining part of the proof that
$$
\sup_{0\leq t\leq T}\left|B^n_t(\varphi)-\frac{1}{n}\sum_{j=1}^n\int_{0}^1\varphi(x)w(x_j,x)dx \int_{0}^t e^{-\alpha(t-s)}\eta^j_sf'(u(s,x_j))ds\right|\to 0
$$
in $L^1$ as $n\to\infty$, implying the result.

To that end, note that for all $t\leq T$ and $n\geq 1,$
$$
\left|B^n_t(\varphi)-\frac{1}{n}\sum_{j=1}^n\int_{0}^1\varphi(x)w(x_j,x)dx \int_{0}^t e^{-\alpha(t-s)}\eta^j_sf'(u(s,x_j))ds\right|\leq 
I^n+II^n,
$$
where (remember that $\Delta^j_s(f)=f(U^j_s)-f(u(s,x_j))$)
$$
I^n=\left|\frac{1}{\sqrt{n}}\sum_{j=1}^n\int_0^t e^{-\alpha(t-s)}\left[\frac{1}{n}\sum_{i=1}^n\varphi(x_i)w(x_j,x_i)-\int_{0}^1\varphi(x)w(x_j,x)dx\right]\Delta_s^j(f)ds \right|,  
$$
and
\begin{equation}
\label{eq:linearization:II}
II^n=\left| \frac{1}{\sqrt{n}}\sum_{j=1}^n\int_{0}^1\varphi(x)w(x_j,x)dx\int_0^te^{-\alpha(t-s)}\left[\Delta_s^j(f) -f'(u(s,x_j))\frac{\eta^j_s}{\sqrt{n}}\right]\right|.
\end{equation}
Now, using the fact that $f$ is Lipschitz and classical Riemann estimates, it follows that
$$
I^n\leq \frac{\|f'\|_{0}}{2n}\sup_{0\leq x,y\leq 1}|\frac{\partial}{\partial x}(\varphi(x)w(y,x))|\int_{0}^{t}e^{-\alpha(t-s)}\frac{1}{n}\sum_{j=1}^{n}|\eta^j_s|ds.
$$
Since the function $s\to \sup_{n\geq 1}\frac{1}{n}\sum_{j=1}^{n}\E|\eta^j_s|$ is locally bounded by Proposition \ref{Prop:upper_bound_eta_it_square}), we have that $I^n\to 0$ in $L^1$ as $n\to\infty$. 

To deal with $II^n$, we recall that the Taylor approximation of order 2 yields
$$
|f(x)-f(y)-f'(y)(x-y)|\leq \frac{|x-y|^2}{2}\|f''\|_{0}, \ \mbox{for all} \ x,y\in\bR.
$$
Hence, we have for all $s\geq 0$ and $j\in[n],$
$$
|\Delta^j_s(f)-f'(u(s,x_j))\frac{\eta^j_s}{\sqrt{n}}|\leq \frac{(U^j_s-u(s,x_j))^2}{2}\|f''\|_0=\frac{\|f''\|_0}{2n}(\eta^j_s)^2,
$$
so that
$$
\E(II^n)\leq \sup_{0\leq y\leq 1}\left|\int_{0}^1\varphi(x)w(y,x)dx\right|\frac{\|f''\|_0}{2\sqrt{n}}\int_{0}^te^{-\alpha(t-s)}\sup_{m\geq 1}\frac{1}{m}\sum_{j=1}^m\E((\eta_s^j)^2)ds.
$$
The local boundedness of  $s\to \sup_{n\geq 1}\frac{1}{n}\sum_{j=1}^{n}\E((\eta^j_s)^2)$ implies that $II^n\to$ in $L^1$ as $n\to\infty$ as well, concluding the proof of the proposition.
\end{proof}

We are now in position to state the main result of this section.

\begin{thm}
\label{thm:limit:is:solution:of:limit:equation}
Under Assumption \ref{ass:f}, any limit point $\Gamma$ of the sequence $(\Gamma^n)_{n\geq 1}$ is a solution of \eqref{eq:limit:equation:Gamma} in $\mathcal{C}(\bR_+,\mathcal{S}')$.
\end{thm}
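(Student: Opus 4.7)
The plan is to combine the convergence of the martingale part with the approximate identity $F_\varphi(\Gamma^n,M^n)\simeq 0$ via a continuous mapping argument. By Corollary \ref{cor:joint:tightness}, the sequence $(\Gamma^n,M^n)_{n\geq 1}$ is tight in $\mathcal{D}([0,T],\mathcal{S}'\times\mathcal{S}')$ and its limit points lie in $\mathcal{C}([0,T],\mathcal{S}'\times\mathcal{S}')$. Given any limit point $\Gamma$ of $(\Gamma^n)_{n\geq 1}$, I would extract a further subsequence along which $(\Gamma^n,M^n)$ converges in law to some pair $(\Gamma,\tilde M)$; Proposition \ref{prop:convergence:martingale} then identifies the marginal law of $\tilde M$ with that of the Gaussian process $M$ of Definition \ref{def:gaussian:process}, so that up to relabelling we denote the joint weak limit by $(\Gamma,M)$.

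Fix $\varphi\in\mathcal{S}$. The remark preceding the theorem ensures that $F_\varphi\colon\mathcal{D}([0,T],\mathcal{S}'\times\mathcal{S}')\to\mathcal{D}([0,T],\bR)$ is continuous at every path lying in the continuous subspace $\mathcal{C}([0,T],\mathcal{S}'\times\mathcal{S}')$, on which the law of $(\Gamma,M)$ is concentrated. By the continuous mapping theorem, $F_\varphi(\Gamma^n,M^n)$ converges in law to $F_\varphi(\Gamma,M)$. On the other hand, Proposition \ref{prop:limit:law:limit:equation} yields $F_\varphi(\Gamma^n,M^n)\to 0$ in probability. By uniqueness of the limit in distribution, $F_\varphi(\Gamma,M)=0$ almost surely, i.e.\ \eqref{eq:limit:equation:Gamma} holds almost surely for this particular $\varphi$.

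The main obstacle is that the exceptional null set above a priori depends on $\varphi$, whereas the statement requires \eqref{eq:limit:equation:Gamma} to hold simultaneously for every $\varphi\in\mathcal{S}$. I would handle this using the separability of the Fréchet space $\mathcal{S}$: choosing a countable dense subset $\{\varphi_k\}_{k\geq 1}\subset\mathcal{S}$ produces a single full-measure event $\Omega_0$ on which $F_{\varphi_k}(\Gamma,M)\equiv 0$ holds for all $k\geq 1$. It then remains to extend this to arbitrary $\varphi\in\mathcal{S}$ by continuity, which requires checking that $\varphi\mapsto F_\varphi(\Gamma,M)_t$ is continuous with respect to the semi-norms $\|\cdot\|_k$ of $\mathcal{S}$ on $\Omega_0$. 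This continuity follows from the linearity of $F_\varphi$ in $\varphi$ combined with the boundedness of $w$, $f'$ and $u$ on $[0,T]\times[0,1]$ (guaranteed by Assumption \ref{ass:f} and Proposition \ref{prop:contraction:NFE}), together with the fact that $\Gamma$ and $M$ are continuous $\mathcal{S}'$-valued paths and hence bounded on compact time intervals when tested against any fixed bounded subset of $\mathcal{S}$. Finally, continuity in time of $\Gamma$ is granted by Proposition \ref{prop:continuity:limit:Gamma}, so $\Gamma\in\mathcal{C}([0,T],\mathcal{S}')$ and the limit equation \eqref{eq:limit:equation:Gamma} is satisfied in the required function space.
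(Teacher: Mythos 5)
Your proof is correct and follows essentially the same route as the paper: joint tightness via Corollary \ref{cor:joint:tightness}, passage to a jointly convergent subsequence, the continuous mapping theorem applied to $F_\varphi$, and Proposition \ref{prop:limit:law:limit:equation} to conclude $F_\varphi(\Gamma,M)=0$, with continuity of $\Gamma$ from Proposition \ref{prop:continuity:limit:Gamma}. You are in fact more careful than the paper, which leaves implicit both the identification of the second marginal's law via Proposition \ref{prop:convergence:martingale} and the upgrade from an almost-sure identity for each fixed $\varphi$ to a simultaneous one over all $\varphi\in\cS$ via separability and continuity of $\varphi\mapsto F_\varphi(\Gamma,M)$.
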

\begin{proof}
Let $\Gamma$ be a limit point of $(\Gamma^n)_n$ and $(n_k)_k$ be such that $\Gamma^{n_k}\to \Gamma$ in distribution. Like in Corollary \ref{cor:joint:tightness}, we obviously have joint tightness of $(\Gamma^{n_k},M^{n_k})$. Hence let $M$ be such that $(\Gamma,M)$ is a limit point of $(\Gamma^{n_k},M^{n_k})$. The convergence result of Proposition \ref{prop:limit:law:limit:equation} and continuous mapping theorem imply that for all $\varphi$ in $\cS$, $F_{\varphi}(\Gamma,M)=0$. Hence $\Gamma$ satisfies \eqref{eq:limit:equation:Gamma}. 

Finally, the continuity of $\Gamma$ follows from Proposition \ref{prop:continuity:limit:Gamma}.
\end{proof}

\section{Convergence}
\label{sec:convergence}

\begin{proposition}
\label{prop:pathwise:uniqueness}
Under Assumption \ref{ass:f}, there is path-wise uniqueness of the solutions of limit equation \eqref{eq:limit:equation:Gamma}: if $\Gamma$ and $\tilde{\Gamma}$ are two solutions in $\mathcal{C}(\bR_+, \mathcal{S}')$ constructed on the same probability space as $M$, then $\Gamma$ and $\tilde{\Gamma}$ are indistinguishable.
\end{proposition}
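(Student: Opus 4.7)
The plan is to argue pathwise. Setting $\Delta_t := \Gamma_t - \tilde{\Gamma}_t$, the common driving noise $M$ cancels when one subtracts the two copies of \eqref{eq:limit:equation:Gamma}, leaving the linear Volterra identity
\begin{equation*}
\Delta_t(\varphi) = \int_0^t e^{-\alpha(t-s)} \Delta_s(L_s \varphi)\, ds, \qquad \varphi \in \mathcal{S},
\end{equation*}
where I would introduce the operator $L_s : \mathcal{S} \to \mathcal{S}$ defined by $(L_s \varphi)(y) = f'(u(s,y)) \int_0^1 \varphi(x) w(y,x)\, dx$. Note that $L_s\varphi$ really does belong to $\mathcal{S}$ because $y \mapsto f'(u(s,y))$ is smooth on $[0,1]$ by Assumption \ref{ass:f} and Proposition \ref{prop:contraction:NFE}, and $w$ is smooth in its first variable.

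The crucial feature I would exploit is that $L_s$ is \emph{regularizing}: since only the smooth kernel $y \mapsto f'(u(s,y))w(y,x)$ carries the $y$-derivatives while $\varphi$ enters as a pure integrand, differentiating under the integral sign yields, for every integer $k \geq 0$,
\begin{equation*}
\|L_s \varphi\|_k \leq K_k \|\varphi\|_0, \qquad s \in [0,T],
\end{equation*}
with constants $K_k$ depending only on $f$, $w$, $u$ and $T$; in particular $\|L_s\varphi\|_0 \leq K_0 \|\varphi\|_0$. Next, I would fix $\omega$ in a set of full probability on which $t \mapsto \Delta_t$ is continuous from $[0,T]$ into $\mathcal{S}'$. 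Its image is then a compact, hence bounded, subset of $\mathcal{S}'$, so by the Banach--Steinhaus theorem applied in the Fréchet (barrelled) space $\mathcal{S}$ there exist a (random) integer $k$ and a constant $C < \infty$ such that
\begin{equation*}
\sup_{t \in [0,T]} |\Delta_t(\varphi)| \leq C \|\varphi\|_k \quad \text{for every } \varphi \in \mathcal{S}.
\end{equation*}

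Iterating the Volterra identity $n$ times rewrites $\Delta_t(\varphi)$ as an $n$-fold simplex integral of $\Delta_{s_n}(L_{s_n} \cdots L_{s_1}\varphi)$. Combining the a priori semi-norm bound above with one application of the regularizing estimate (to gain the $\|\cdot\|_k$ norm) and $n-1$ applications of the $\|\cdot\|_0$-bound on $L_s$, the Lebesgue measure $T^n/n!$ of the simplex yields
\begin{equation*}
|\Delta_t(\varphi)| \leq C K_k K_0^{n-1} \|\varphi\|_0 \, \frac{T^n}{n!} \xrightarrow[n \to \infty]{} 0.
\end{equation*}
Hence $\Delta_t(\varphi) = 0$ for every $\varphi \in \mathcal{S}$ and every $t \in [0,T]$, so $\Gamma_t(\omega) = \tilde{\Gamma}_t(\omega)$ in $\mathcal{S}'$; pathwise continuity of $t \mapsto \Delta_t$ then upgrades this to indistinguishability of $\Gamma$ and $\tilde{\Gamma}$, which can be extended to $\mathbb{R}_+$ by exhausting with $T \to \infty$.

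The main obstacle, I expect, will be the semi-norm step rather than the iteration itself: a naive Gronwall argument directly in $\mathcal{S}'$ does not close, because each application of $L_s$ would in principle push one to a higher-order semi-norm of $\varphi$. The whole argument hinges on the regularizing nature of $L_s$ (all $y$-derivatives are absorbed into the smooth kernel), which allows the Banach--Steinhaus step to anchor the iteration at a single fixed semi-norm order $k$ and produce the factorial decay $T^n/n!$.
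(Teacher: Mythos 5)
Your proof is correct and follows essentially the same strategy as the paper: both cancel the noise $M$, invoke the uniform boundedness principle pathwise to obtain $|\Delta_t(\varphi)|\le C\|\varphi\|_k$ for a single semi-norm index $k$, and exploit the regularizing estimate $\bigl\|\int_0^1\varphi(x)w(\cdot,x)f'(u(s,\cdot))\,dx\bigr\|_k\lesssim\|\varphi\|_0$ so that the iterated semi-norm order never increases. The paper finishes with Gronwall's lemma applied to $L_t:=\sup_{\varphi}\sup_{s\le t}|\Delta_s(\varphi)|/\|\varphi\|_k$ rather than by unrolling the Volterra identity explicitly, but these are interchangeable; contrary to your closing remark, Gronwall does close once the regularizing estimate is in place, which is precisely what the paper does.
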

\begin{proof}
Let $\Gamma$ and $\tilde{\Gamma}$ be two solutions and take $T>0$. In the following, consider the restrictions of $\Gamma$ and $\tilde{\Gamma}$ to $[0,T]$. For almost every $\omega\in \Omega$, $\Gamma(\omega)$ and $\tilde{\Gamma}(\omega)$ are continuous and $F_{\varphi}(\Gamma(\omega)-\tilde{\Gamma}(\omega),M)=0$ for all $\varphi\in \cS$, i.e.
\begin{equation}\label{eq:equation:for:uniqueness:Gamma}
    (\Gamma(\omega)-\tilde{\Gamma}(\omega))_t(\varphi) = \int_0^t e^{-\alpha (t-s)} (\Gamma(\omega)-\tilde{\Gamma}(\omega))_s \left( \int_0^1\varphi(x)w(\cdot,x)f'(u(s,\cdot))dx \right) ds.
\end{equation}
In the following, we assume that such a generic $\omega$ is fixed and omit to write the dependence.

Let $\varphi$ be in $\cS$. Since $\Gamma$ and $\tilde{\Gamma}$ are continuous, $\Gamma(\varphi)$ and $\tilde{\Gamma}(\varphi)$ belong to $\cC([0,T],\bR)$ and in particular, $\sup_{t\in [0,T]} |(\Gamma_t-\tilde{\Gamma}_t)(\varphi)| <+\infty$. The uniform boundedness principle \cite[Theorem 10.11.]{simon2017banach} implies that there exists $k$ and $c>0$ such that
\begin{equation*}
    \forall \varphi\in \cS, \, \sup_{s\in [0,T]} |(\Gamma_s-\tilde{\Gamma}_s)(\varphi)| \leq c ||\varphi||_k.
\end{equation*}
Therefore, let us define for all $t$ in $[0,T]$, 
\begin{equation*}
    L_t = \sup_{\varphi \in \cS} \sup_{s\in [0,t]} \frac{|(\Gamma_s-\tilde{\Gamma}_s)(\varphi)|}{||\varphi||_k} \leq c <+\infty.
\end{equation*}
In order to use boundedness for $f$ and its derivatives, let us remark that $K_T = \sup_{t\in [0,T]} ||u(t,\cdot)||_0<+\infty$ so that without loss of generality, $f$ could be restricted to the compact interval $[-K_T,K_T]$ and so considered to be $\cC^\infty$ with bounded derivatives of any order.

Then, using Lemma \ref{lem:Ck:bound} and the fact that $w$, $u$ and $f$ are smooth,
\begin{multline*}
    \left\| \int_0^1\varphi(x)w(\cdot,x)f'(u(s,\cdot))dx \right\|_k \leq  ||\varphi||_0 \sup_{s\in [0,T]} \sup_{x\in [0,1]} ||w(\cdot,x)f'(u(s,\cdot))||_k \\
    \leq C ||\varphi||_k  \left( \sup_{x\in [0,1]} ||w(\cdot,x)||_k \right) ||f_{|[-K_T,K_T]}||_{k+1} \left( \sup_{s\in [0,T]} ||u(s,\cdot)||_{k}^k \right) \leq C_T ||\varphi||_k.
\end{multline*}
Going back to \eqref{eq:equation:for:uniqueness:Gamma}, we have
\begin{equation*}
    |(\Gamma-\tilde{\Gamma})_t(\varphi)| \leq  \int_0^t  L_s \left\| \int_0^1\varphi(x)w(\cdot,x)f'(u(s,\cdot))dx \right\|_k ds.
\end{equation*}
and so $L_t \leq C_T \int_0^t  L_s  ds$. Since $L_t$ is a priori bounded by $c$, Gronwall's lemma implies that for all $t\in [0,T]$, $L_t=0$ which means that $\Gamma=\tilde{\Gamma}$.

The argument above holds for almost every $\omega$ so path-wise uniqueness is proven.
\end{proof}

\begin{thm}
\label{thm:CLT}
Under Assumption \ref{ass:f}, the sequence $(\Gamma_n)_n$ converges in law in $\mathcal{D}(\bR_+, \mathcal{S}')$ to the unique solution of \eqref{eq:limit:equation:Gamma} in $\mathcal{C}(\bR_+, \mathcal{S}')$.
\end{thm}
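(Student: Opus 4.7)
The plan is to package the results of the previous sections into a standard tightness-plus-identification scheme. I would fix $T>0$ throughout and work in $\cD([0,T],\cS')$; the passage from $\cD([0,T],\cS')$ to $\cD(\bR_+,\cS')$ claimed in the statement will follow at the end by a diagonal extraction over $T\in\bN$, since convergence in $\cD(\bR_+,\cS')$ amounts to convergence in $\cD([0,N],\cS')$ for every $N\in\bN$ together with the trivial consistency of the limits on nested intervals.

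Tightness of $(\Gamma^n)_{n\geq 1}$ in $\cD([0,T],\cS')$ is already granted by Corollary~\ref{cor:tightness_of_Gamma_M_in_Sprime}. I would then start from an arbitrary subsequence $(\Gamma^{n_k})_k$ converging in law to some $\Gamma^*$ and, invoking Corollary~\ref{cor:joint:tightness}, extract further so that $(\Gamma^{n_k},M^{n_k})_k$ converges jointly in law to some pair $(\Gamma^*,M^*)\in\cC([0,T],\cS'\times\cS')$. Proposition~\ref{prop:convergence:martingale} then forces $M^*$ to have the Gaussian law specified in Definition~\ref{def:gaussian:process}, and Theorem~\ref{thm:limit:is:solution:of:limit:equation} asserts that $\Gamma^*$ solves the limit equation \eqref{eq:limit:equation:Gamma} driven by $M^*$.

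The key remaining step is to promote the pathwise uniqueness of Proposition~\ref{prop:pathwise:uniqueness} into uniqueness in law for $\Gamma^*$. Since \eqref{eq:limit:equation:Gamma} is linear in $\Gamma$ and affine in $M$, this is straightforward: the Picard iteration
\[
\Gamma^{(0)}_t(\varphi) := e^{-\alpha t}M_t(\varphi), \qquad \Gamma^{(\ell+1)}_t(\varphi) := e^{-\alpha t}M_t(\varphi) + \int_0^t e^{-\alpha(t-s)} \Gamma^{(\ell)}_s\!\left( \int_0^1 \varphi(x)w(\cdot,x)f'(u(s,\cdot))dx \right) ds
\]
converges, via the Gronwall-type bound already exploited in the proof of Proposition~\ref{prop:pathwise:uniqueness}, to a limit $\Psi(M)\in\cC([0,T],\cS')$, where $\Psi$ is a deterministic measurable functional that does not depend on the underlying probability space. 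Pathwise uniqueness then forces $\Gamma^*=\Psi(M^*)$ almost surely, so the law of $\Gamma^*$ is the pushforward of the Gaussian law of $M$ by $\Psi$ and, in particular, does not depend on the extracted subsequence.

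Since every subsequence of $(\Gamma^n)_n$ therefore admits a further subsequence converging in law to the same limit $\Psi(M)$, the full sequence converges in $\cD([0,T],\cS')$, and the diagonal argument over $T\in\bN$ upgrades this to convergence in $\cD(\bR_+,\cS')$. The only place where one could worry is the construction of the solution map $\Psi$ in the third paragraph, but this is essentially dictated by the contraction estimate already established for Proposition~\ref{prop:pathwise:uniqueness}, so I do not expect a genuine obstacle here.
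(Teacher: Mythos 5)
Your proof is correct and reaches the same conclusion, but it replaces the paper's central tool with a more elementary one. The paper closes the argument by invoking the Yamada--Watanabe theorem (``by the same argument as in Revuz--Yor, Theorem IX.1.7(i)''): pathwise uniqueness from Proposition~\ref{prop:pathwise:uniqueness} plus weak existence (Theorem~\ref{thm:limit:is:solution:of:limit:equation}) gives uniqueness in law. You instead observe that equation~\eqref{eq:limit:equation:Gamma} is linear in $\Gamma$ and affine in $M$, so the solution is an explicit deterministic functional $\Psi(M)$ obtained by Picard iteration, after which pathwise uniqueness forces any limit point to equal $\Psi(M^*)$ and its law to be the pushforward of the Gaussian law of Definition~\ref{def:gaussian:process}. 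This avoids Yamada--Watanabe entirely and is arguably cleaner in the present linear setting, at the cost of having to verify that the iteration converges in $\cC([0,T],\cS')$ and that $\Psi$ is measurable on the path space --- the sort of estimate already implicit in the Gronwall bound of Proposition~\ref{prop:pathwise:uniqueness}, but not quite contained in it as stated, since that proof compares two a priori solutions rather than constructing one. If you wanted to make the sketch airtight you would carry out the contraction estimate on the iterates (on each $[0,T]$, iterating the bound $\sup_{s\le t}\sup_\varphi |(\Gamma^{(\ell+1)}_s-\Gamma^{(\ell)}_s)(\varphi)|/\|\varphi\|_k \le C_T\int_0^t(\cdots)ds$), show each iterate lands in $\cC([0,T],\cS')$, and note the iteration is a pointwise limit of continuous maps on $\cC([0,T],\cS')$, hence Borel. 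Your final diagonalization over $T\in\bN$ to pass from $\cD([0,T],\cS')$ to $\cD(\bR_+,\cS')$ is fine and, if anything, a step the paper glosses over.
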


\begin{proof}
Let $\Gamma$ be a limit point of $(\Gamma_n)_n$. According to Theorem \ref{thm:limit:is:solution:of:limit:equation}, $\Gamma$ is a solution of the limit system \eqref{eq:limit:equation:Gamma} in
$\mathcal{C}(\bR_+,\mathcal{S}')$. Yet, path-wise uniqueness given by Proposition \ref{prop:pathwise:uniqueness} and Yamada--Watanabe theorem gives weak uniqueness by the same argument as in \cite[Theorem IX.1.7(i)]{Revuz_1999}. Finally, weak uniqueness gives uniqueness of the limit point $\Gamma$ and so convergence.
\end{proof}

\section{Connection with a stochastic NFE}
\label{sec:conec_stoc_NFE}

Let us begin this section with some discussion about the standard central limit theorem. Let $\bar{X}_n$ be the empirical mean of some i.i.d. square integrable centered and normalized random variables $X_1,\dots,X_n$. The law of large numbers and central limit theorem respectively tells that $\bar{X}_n = 0 + o(1)$ and $\bar{X}_n = 0 + n^{-1/2} Z + o(n^{-1/2})$ where $Z$ is a standard Gaussian random variable. Of course, the second statement is purely informal but gives the flavor of the result.

With this description in mind, we provide here an informal overview of the mean-field limit and central limit theorem stated in this paper. Assume for ease of the presentation that the limit $\Gamma_t$ is in fact a function, namely there exists $(t,x)\mapsto \Gamma_t(x)$ such that for all $\varphi$, $\Gamma_t(\varphi) = \int_0^1 \varphi(x) \Gamma_t(x)dx$. Then, the take-away message until this point of the paper is: in order to approximate the microscopic system $(U^i_t)_{t\geq 0,i \in [n]}$, there are two steps,
\begin{enumerate}
    \item the mean-field limit $u(t,x)$ makes an error of order $n^{-1/2}$ since the renormalized error $\Gamma^n$ goes to something non trivial;
    \item the mean-field combined with the fluctuations, namely $Y^n(t,x):=u(t,x)+n^{-1/2}\Gamma_t(x)$ makes an error of order $o(n^{-1/2})$.
\end{enumerate}
With this in mind, the goal of this section is to find an approximation $V^n$ of $Y^n$ with an error of order $o(n^{-1/2})$ justifying that $V^n$ is a better approximation than the standard mean-field limit. This approximation $V^n$ will be characterized as the unique solution of a particular stochastic version of the neural field equation and the discussion above justifies that it is the \emph{Stochastic Neural Field Equation} (SNFE) naturally associated with the Hawkes processes given by \eqref{def:SEHP_intensity}.

In the following we are interested in the following SNFE
\begin{equation}
\label{def:SNFE}
\begin{cases}
dV^{n}_t(x)=\left( -\alpha V^{n}_t(x)+ \int_{0}^1 w(y,x) f(V^{n}_t(y))dy \right) dt + \int_{0}^1 w(y,x) \frac{\sqrt{f(V^{n}_t(y))}}{\sqrt{n}} W(dt,dy),\\
V^{n}_0(x)=u_0(x).
\end{cases}
\end{equation}
where $W$ is a Gaussian white noise. The mathematical arguments used below are highly inspired from \cite{faugeras2015stochastic} where other kinds of stochastic neural field equations can be found.

First we need to specify what we mean by Gaussian white noise. Here, we use the Gaussian random field 
\begin{equation}\label{def:WNP}
W=(W(A))_{A\in\cB(\bR_+\times [0,1])},
\end{equation}
with covariance function
\begin{equation*}
\E(W(A)W(B))=|A\cap B|,    
\end{equation*}
where $|A\cap B|$ denotes the Lebesgue measure of $A\cap B$. Then, the SNFE \eqref{def:SNFE} has to be understood in the weak sense.
\begin{definition}\label{def:solution:snfe}
By a solution to \eqref{def:SNFE} we mean a real-valued random field $(V^{n}_t(x))_{t\geq 0,x\in [0,1]}$ such that
\begin{multline}
\label{def:solution_SNFE}
V^{n}_t(x) = e^{-t} V^{n}_0(x) + \int_{0}^{t} e^{-(t-s)}\int_{0}^1 w(y,x) f(V^{n}_s(y))dyds 
\\ +\int_{0}^{t} e^{-(t-s)} \int_{0}^1 w(y,x) \frac{\sqrt{f(V^{n}_s(y))}}{\sqrt{n}} W(ds,dy),
\end{multline}
almost surely for all $t\geq 0$ and $x\in [0,1].$
\end{definition}
Now it suffices to give sense to the stochastic term,
$$
\int_{0}^{t} e^{-(t-s)} \int_{0}^1 w(y,x) \frac{\sqrt{f(V^{n}_s(y))}}{\sqrt{n}} W(ds,dy).
$$
Walsh's theory of stochastic integration provides a nice framework to give it a sense, see for instance \cite[Theorem 3.1]{faugeras2015stochastic} or \cite{Dalang2009AMO} for details. We use this theory in the rest of the paper.

Before stating the well posedness of the SFNE and the approximation result, we first provide some heuristics leading to \eqref{def:SNFE}.

\subsection{Heuristic motivation for \eqref{def:SNFE}}
\label{sec:heuristics}
Assume for now that $\Gamma_t$ and $M_t$ defined as distributions in the previous sections are in fact functions (this will be precised later on). Then, let us make the following abuse of notation: $\Gamma_t(x) = \Gamma_t(\delta_x)$ and $M_t(x) = M_t(\delta_x)$ in such a way that,
for any $\varphi\in \cS$, 
$$
\Gamma_t(\varphi)=\int_0^1\varphi(x)\Gamma_t(x)dx \ \mbox{and} \ M_t(\varphi)=\int_0^1\varphi(x)M_t(x)dx.
$$
To guess what equation $\Gamma_t(x)$ solves, we take (informally) $\varphi=\delta_x$ in \eqref{def:limit_process} to get
\begin{equation}
\label{def:limit_process_as_function}
\Gamma_t(x)=e^{-\alpha t}M_t(x)+\int_{0}^t e^{-\alpha (t-s)}\int_{0}^1w(y,x)f'(u(s,y))\Gamma_s(y)dyds, \ t\geq 0 \ \mbox{and} \ x\in[0,1].
\end{equation}
As briefly discussed in the Introduction, the first term on the RHS of the equation above, namely $M_t(x)$, is the limit in distribution as $n\to\infty$ of
$$
\frac{1}{\sqrt{n}}\sum_{j=1}^n w(x_j,x)\int_{0}^t e^{\alpha s}(dN^j_s-f(U^j_s))ds,
$$
which has mean zero and limit variance
$$
\int_{0}^te^{2\alpha s}\int_0^1w^2(y,x)f(u(s,y))dyds.
$$
Besides, the Martingale Central Limit theorem ensures that $M_t(x)$ is Gaussian so a suitable description for $M_t(x)$ should be:
\begin{equation}
\label{def:martingale_as_function}
M_t(x)=\int_{0}^t\int_{0}^1e^{\alpha s}w(y,x)\sqrt{f(u(s,y))}W(ds,dy), \ t\geq 0 \ \mbox{and} \ x\in[0,1],   
\end{equation}
where $W$ is the white noise process defined in \eqref{def:WNP}.

Now, we are interested in $Y^n_t(x)=u(t,x)+n^{-1/2}\Gamma_t(x)$ which, by summing \eqref{eq:system:u:lambda} and \eqref{def:limit_process_as_function} (where $M$ is replaced according to \eqref{def:martingale_as_function}), is given by
\begin{multline}\label{eq:heuristics:Yn}
 Y^n_t(x) = e^{-\alpha t}u_0(x_i) +\int_{0}^{t}e^{-\alpha(t-s)}\int_0^1w(y,x)\frac{\sqrt{f(u(s,y))}}{\sqrt{n}}W(ds,dy)\\
  + \int_{0}^{t}e^{-\alpha(t-s)}\int_0^1w(y,x)\left(f(u(s,y))+n^{-1/2}f'(u(s,y))\Gamma_s(y)\right)dyds.
\end{multline}
According to Taylor approximation, the error made when replacing $f(Y^n_s(y))$ by $f(u(s,y))+n^{-1/2}f'(u(s,y))\Gamma_s(y)$ is of order $o(n^{-1/2})$ and replacing $f(Y^n_s(y))$ by $f(u(s,y))$ is of order $o(1)$. When making these replacements, Equation \eqref{eq:heuristics:Yn} is exactly the equation satisfied by the solution of our SNFE. For this reason, we expect that the difference between $Y^n$ and the solution of the SFNE is of order $o(n^{-1/2})$. This result is confirmed below in Theorem \ref{thm:approximation:snfe}.

\subsection{Results on the SNFE}

\begin{thm}\label{thm:existence:uniqueness:snfe}
Under Assumption \ref{ass:f}, assume that $f$ is lower bounded by $m>0$. 
Then, for all $n\geq 1$ there exists a unique (up to modification) solution $V$ of \eqref{def:SNFE} (in the sense of Definition \ref{def:solution:snfe}) such that for all $T>0$,
\begin{equation}\label{eq:L2:control:snfe}
    \sup_{t\leq T, x\in [0,1]} \mathbb{E}\left[ |V_t(x)|^2 \right] <+\infty.
\end{equation}
\end{thm}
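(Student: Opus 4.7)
The plan is to construct $V^n$ via Picard iteration in a suitable Banach space of random fields, following the standard approach of Walsh's theory. Fix $T>0$ and consider the space $\mathcal{H}_T$ of jointly measurable, $(\cF_t)_{t\geq 0}$-adapted random fields $V:[0,T]\times[0,1]\times\Omega\to\bR$ equipped with the norm $\|V\|_T^2 := \sup_{(t,x)\in[0,T]\times[0,1]}\E[V_t(x)^2]$. The key preliminary observation is that the coefficient $v\mapsto\sqrt{f(v)}$ appearing in the stochastic integral of \eqref{def:SNFE} is globally Lipschitz: the lower bound $f\geq m>0$ yields
\begin{equation*}
|\sqrt{f(u)}-\sqrt{f(v)}| \;=\; \frac{|f(u)-f(v)|}{\sqrt{f(u)}+\sqrt{f(v)}} \;\leq\; \frac{\|f'\|_0}{2\sqrt{m}}\,|u-v|,
\end{equation*}
while $v\mapsto f(v)$ is Lipschitz with constant $\|f'\|_0$ by Assumption \ref{ass:f}. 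This is precisely where the new lower-bound hypothesis enters: without it, the diffusion coefficient would only be H\"older continuous and the standard contraction argument would break down.

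Define the map $\Phi$ on $\mathcal{H}_T$ by the RHS of \eqref{def:solution_SNFE}. Using Walsh's $L^2$ isometry for the stochastic integral, the Cauchy--Schwarz inequality for the drift, the boundedness of $w$, and the two Lipschitz estimates above, one obtains for all $V,\tilde V\in\mathcal{H}_T$ and all $(t,x)\in[0,T]\times[0,1]$,
\begin{equation*}
\E\!\left[(\Phi(V)_t(x)-\Phi(\tilde V)_t(x))^2\right] \;\leq\; C\int_0^t \sup_{y\in[0,1]}\E[(V_s(y)-\tilde V_s(y))^2]\,ds,
\end{equation*}
where $C=C(T,\|w\|_0,\|f'\|_0,m,n)$ is finite. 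Iterating the Picard scheme $V^{(k+1)}=\Phi(V^{(k)})$ starting from $V^{(0)}\equiv u_0$, a direct induction gives $\sup_x\E[(V^{(k+1)}_t(x)-V^{(k)}_t(x))^2]\leq (Ct)^k/k!\cdot\|V^{(1)}-V^{(0)}\|_T^2$, so that $(V^{(k)})_k$ is a Cauchy sequence in $\mathcal{H}_T$; its limit $V^n$ satisfies $V^n=\Phi(V^n)$, which is exactly the integral formulation \eqref{def:solution_SNFE}.

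The moment bound \eqref{eq:L2:control:snfe} is obtained along the way: the same $L^2$ isometry applied to $V^n=\Phi(V^n)$, combined with the linear growth $f(v)\leq |f(0)|+\|f'\|_0|v|$, yields
\begin{equation*}
\E[V^n_t(x)^2] \;\leq\; C'(T)\left(1+\int_0^t \sup_{y\in[0,1]}\E[V^n_s(y)^2]\,ds\right),
\end{equation*}
and Gronwall's lemma delivers the uniform control. Uniqueness up to modification, within the class of solutions satisfying \eqref{eq:L2:control:snfe}, follows by applying the very same contraction estimate to the difference of two solutions and invoking Gronwall once more. The main (and rather routine) technical obstacle is to propagate measurability, adaptedness and predictability of the stochastic integrand at every step of the iteration so that Walsh's isometry legitimately applies; this is handled by the usual arguments, relying on the smoothness of $u_0$ and $w$ and on the Lipschitz and linear-growth properties of $f$ and $\sqrt{f}$ established above.
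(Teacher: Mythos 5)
Your proof is correct and follows essentially the same strategy as the paper: Picard iteration in an $L^2$ framework, with the crucial observation that the lower bound $f\geq m>0$ makes $\sqrt{f}$ globally Lipschitz, and Gronwall's lemma for both the moment bound and uniqueness. The paper phrases the Picard estimate as an iterated integral bound rather than your explicit $(Ct)^k/k!$ factorial decay, and proves uniqueness separately before existence, but these are presentational differences only.
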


\begin{proof}
Without loss of generality we assume in this proof that $n=1$.

We begin with uniqueness. Suppose that $V$ and $\tilde{V}$ are two solutions and define $\Delta=V-\tilde{V}$. We have
\begin{multline*}
\Delta(t,x) = \int_{0}^{t} e^{-(t-s)}\int_{0}^{1} w(y,x) [ f(V_s(y)) - f(\tilde{V}_s(y)) ] dyds 
\\ +\int_{0}^{t} e^{-(t-s)} \int_{0}^1 w(y,x) \left[ \sqrt{f(V_s(y))} - \sqrt{f(\tilde{V}_s(y))} \right] W(ds,dy).
\end{multline*}
By Jensen and Burkhölder inequalities it follows that
\begin{multline*}
    \mathbb{E}[|\Delta(t,x)|^2] \leq 2t\|w\|^2_0 \int_{0}^{t} e^{-2(t-s)} \int_{0}^{1} \mathbb{E} \left[ \left(  f(V_s(y)) - f(\tilde{V}_s(y)) \right)^2   \right] dy ds \\
    + 2\|w\|^2_0 \int_{0}^{t} e^{-2(t-s)} \int_0^1 \E\left[ \left( \sqrt{f(V_s(y))}-\sqrt{f(\tilde{V}_s(y))} \right)^2 \right] dyds.
\end{multline*}
Then, we use the fact that $f$ and $\sqrt{f}$ are Lipschitz (since $f$ is lower bounded) and it follows that
\begin{multline*}
    \mathbb{E}[|\Delta(t,x)|^2] \leq 2t\|w\|^2_0 \|f'\|^2_0 \int_{0}^{t} e^{-2(t-s)} \int_{0}^{1} \mathbb{E} \left[ |\Delta(s,y)|^2   \right] dy ds \\
    + 2\|w\|^2_0C\|f'\|^2_0 \int_{0}^{t} e^{-2(t-s)} \int_0^1 \E\left[ |\Delta(s,y)|^2 \right] dyds.
\end{multline*}
Writing $G(t):=\sup_{x\in [0,1]} \mathbb{E}[|\Delta(t,x)|^2]$, we get
\begin{equation}\label{eq:gronwall:uniqueness:snfe}
    G(t)\leq C(t+1) \int_0^t G(s) ds,
\end{equation}
and Gronwall's lemma implies $G(t)=0$ for all $t\geq 0$ which grants the uniqueness property.

For the existence of a solution, we can proceed with Picard iteration. Let $V^{(0)}_t(x) = u_0(x)$ for all $t\geq 0$, and define iteratively on $k$,
\begin{multline}\label{eq:picard:iteration:snfe}
    V^{(k+1)}_t(x) := e^{-t} u_0(x) + \int_{0}^{t} e^{-(t-s)}\int_{0}^1 w(y,x) f(V^{(k)}_s(y))dyds\\
    +\int_{0}^{t} e^{-(t-s)} \int_{0}^1 w(y,x) \sqrt{f(V^{(k)}_s(y))} W(ds,dy).
\end{multline}
The proof of convergence of the Picard iteration is pretty classic and follows computations which are similar to the ones given above to show the uniqueness property. We give below a sketch of proof with the main steps whereas the interested reader is referred to \cite[Theorem 3.7.]{faugeras2015stochastic} for the missing computations.

Using the fact that $\sup_{x\in [0,1]} u_0(x)^2<+\infty$ (which is granted by Assumption \ref{ass:f}), one can show by induction that $\sup_{t\leq T,x\in[0,1]} \E\left[ |V^{(k)}_t(x)|^2 \right] <+\infty$ so that the stochastic integral in \eqref{eq:picard:iteration:snfe} is well defined in Walsh's sense. Then, defining $\Delta^{(k)} := V^{(k+1)} - V^{(k)}$ and $G_k(t) := \sup_{x\in [0,1]} \mathbb{E}[|\Delta^{(k)}_t(x)|^2]$ and applying induction on the same computations as the one leading to Equation \eqref{eq:gronwall:uniqueness:snfe} give
\begin{equation*}
    G_k(t)\leq C^k(t+1)^k \int_0^t \dots \int_0^{t_{k-1}} G_0(t_k) dt_kk\dots dt_1,
\end{equation*}
for $k\geq 1$ and $G_0(t)\leq C_t (1+\sup_{x\in [0,1]} u_0(x)^2)$ for some time dependent constant $C_t$. Using both inequalities above, one can show that 
$$\sup_k \sup_{t\leq T,x\in[0,1]} \E\left[ |V^{(k)}_t(x)|^2 \right]<+\infty.$$
Finally, this implies the existence of the limit $V_t(x)$ in $L^2$ and that the convergence is uniform, i.e.
\begin{equation*}
    \sup_{t\leq T,x\in[0,1]} \E\left[ |V^{(k)}_t(x) - V_t(x)|^2 \right] \to 0.
\end{equation*}
Hence, Equation \eqref{eq:L2:control:snfe} is satisfied and the uniform convergence justifies taking the limit as $k$ goes to infinity in \eqref{eq:picard:iteration:snfe} in order to prove that the limit $V$ is indeed a solution in the sense of Definition \ref{def:solution:snfe}.
\end{proof}

Furthermore, the mean-field limit is an approximation of the SFNE in the following sense.

\begin{proposition}
\label{prop:approximation:snfe}
Under the assumptions of Theorem \ref{thm:existence:uniqueness:snfe}, there exists a constant $C=C(T,w,f,u_0)$ such that the unique solution $V^n$ of \eqref{def:SNFE} satisfies
\begin{equation}\label{eq:L2:control:approximation:snfe}
    \sup_{t\leq T, x\in [0,1]} \mathbb{E}\left[ |V^n_t(x)-u(t,x)|^2 \right] \leq \frac{C}{n}.
\end{equation}
\end{proposition}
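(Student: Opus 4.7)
The plan is to compare the two weak formulations directly, take an $L^2$ norm, and apply Gronwall. Write $\Delta_n(t,x) := V^n_t(x) - u(t,x)$. Subtracting the NFE \eqref{eq:system:u:lambda} from the mild formulation \eqref{def:solution_SNFE} gives a decomposition $\Delta_n(t,x) = I_1^n(t,x) + I_2^n(t,x)$, where
\begin{equation*}
    I_1^n(t,x) = \int_0^t e^{-\alpha(t-s)}\int_0^1 w(y,x)[f(V^n_s(y)) - f(u(s,y))]\,dyds,
\end{equation*}
is a deterministic-type Lipschitz perturbation term, and
\begin{equation*}
    I_2^n(t,x) = \frac{1}{\sqrt{n}}\int_0^t e^{-\alpha(t-s)}\int_0^1 w(y,x)\sqrt{f(V^n_s(y))}\,W(ds,dy),
\end{equation*}
is the stochastic noise contribution which carries the explicit factor $n^{-1/2}$.

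First, I would control $I_1^n$ by Jensen's inequality and the Lipschitz property of $f$ granted by Assumption \ref{ass:f}, to obtain
\begin{equation*}
    \E[(I_1^n(t,x))^2] \leq T\|w\|_0^2\|f'\|_0^2 \int_0^t \sup_{y\in[0,1]} \E[|\Delta_n(s,y)|^2]\,ds.
\end{equation*}
Next I would handle $I_2^n$ by the Walsh isometry, giving
\begin{equation*}
    \E[(I_2^n(t,x))^2] = \frac{1}{n}\int_0^t e^{-2\alpha(t-s)}\int_0^1 w(y,x)^2 \E[f(V^n_s(y))]\,dyds.
\end{equation*}
Since $f$ is Lipschitz, $f(V^n_s(y)) \leq f(0) + \|f'\|_0|V^n_s(y)|$, and Theorem \ref{thm:existence:uniqueness:snfe} provides a finite $L^2$-bound on $V^n$ on $[0,T]\times[0,1]$. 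A check of the Picard argument in the proof of Theorem \ref{thm:existence:uniqueness:snfe} shows that the bound is in fact uniform in $n\geq 1$ (the factor $1/\sqrt{n}\leq 1$ in front of the stochastic term only helps in the iteration). Therefore, $\sup_{n,s\leq T,y\in[0,1]}\E[f(V^n_s(y))]$ is finite and we get $\E[(I_2^n(t,x))^2] \leq C_1/n$ for some constant $C_1 = C_1(T,w,f,u_0)$.

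Putting these estimates together and setting $G_n(t) := \sup_{x\in[0,1]} \E[|\Delta_n(t,x)|^2]$, I obtain an integral inequality of the form
\begin{equation*}
    G_n(t) \leq \frac{2C_1}{n} + 2T\|w\|_0^2\|f'\|_0^2 \int_0^t G_n(s)\,ds,
\end{equation*}
and Gronwall's lemma immediately yields $G_n(t)\leq (2C_1/n) \exp(2T^2\|w\|_0^2\|f'\|_0^2)$ for all $t\leq T$, which is precisely \eqref{eq:L2:control:approximation:snfe}. The only real subtlety is the uniformity in $n$ of the $L^2$-bound on $V^n$ needed to control $\E[f(V^n_s(y))]$; since $f$ is bounded below and the Picard scheme estimates depend only on $\|w\|_0, \|f'\|_0, T$ and $\sup_{x}u_0(x)^2$, this uniformity is cheap. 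All other steps are routine applications of Jensen, the Walsh isometry, and Gronwall.
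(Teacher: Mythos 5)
Your proof is correct and follows the same route as the paper: subtract the two mild formulations to get $\Delta_n=I_1^n+I_2^n$, bound the drift difference $I_1^n$ via Jensen and the Lipschitz bound on $f$, bound the noise term $I_2^n$ via the $L^2$ isometry, and close with Gronwall applied to $G_n(t):=\sup_{x}\E[|\Delta_n(t,x)|^2]$. The only place where you deviate slightly is in controlling $\E[f(V^n_s(y))]$ inside the noise estimate: you appeal to a uniform-in-$n$ $L^2$-bound on $V^n$ read off from the Picard iteration of Theorem \ref{thm:existence:uniqueness:snfe}, which is indeed valid since the $n^{-1/2}$ factor only shrinks the constants in that iteration; the paper's terse ``same arguments'' is more naturally read as linearizing $f(V^n_s(y))$ around $f(u(s,y))$ so that the extra piece folds back into $G_n(s)$ and is absorbed by the same Gronwall step, avoiding any appeal to uniformity in $n$. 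Either variant is fine and both yield the stated $O(n^{-1})$ bound.
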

\begin{proof}
The proof is pretty similar to the proof of Theorem \ref{thm:existence:uniqueness:snfe} but here we must keep track of the index $n$. Let us denote $\Delta^n := V^n - u$ and $G^n(t):=\sup_{x\in [0,1]} \mathbb{E}[|\Delta^n_t(x)|^2]$. We have
\begin{multline*}
\Delta^n(t,x) = \int_{0}^{t} e^{-(t-s)}\int_{0}^{1} w(y,x) [ f(V^n_s(y)) - f(u(s,y)) ] dyds 
\\ + n^{-1/2} \int_{0}^{t} e^{-(t-s)} \int_{0}^1 w(y,x) \sqrt{f(V^n_s(y))}  W(ds,dy).
\end{multline*}
The same arguments (Jensen and Burkhölder inequalities) imply the existence of a constant $C$ such that
\begin{equation*}
    G^n(t)\leq C\left(n^{-1} + t \int_0^t G^n(s) ds\right),
\end{equation*}
which grants the result according to Gronwall's lemma.
\end{proof}

The following result gives the space-time regularity of the solution of the SFNE \eqref{def:SNFE}.

\begin{thm}\label{thm:regularity:snfe}
Under the assumptions of Theorem \ref{thm:existence:uniqueness:snfe}, the unique solution $V$ of \eqref{def:SNFE} is $L^p$ bounded,
\begin{equation}\label{eq:Lp:control:snfe}
    \sup_{t\leq T, x\in [0,1]} \mathbb{E}\left[ |V_t(x)|^p \right] <+\infty,
\end{equation}
for all $p\geq 2$, and there is a modification of $V$ such that $(t,x)\mapsto V_t(x)$ is $(\eta_1,\eta_2)$-Hölder continuous for any $\eta_1<1/2$ and $\eta_2<1$.
\end{thm}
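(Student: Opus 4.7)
The plan is to prove the $L^p$ bound \eqref{eq:Lp:control:snfe} first, then derive quantitative moment estimates for space and time increments of $V$, and finally invoke an anisotropic Kolmogorov--Čentsov continuity criterion to produce a jointly Hölder modification. For \eqref{eq:Lp:control:snfe}, fix $p\geq 2$ and set $G^p(t):=\sup_{x\in[0,1]} \mathbb{E}[|V_t(x)|^p]$. Splitting the mild formulation \eqref{def:solution_SNFE} into initial, drift and stochastic contributions and using $(a+b+c)^p\leq 3^{p-1}(a^p+b^p+c^p)$, I would bound the drift part via Jensen and the sublinear growth $|f(v)|\leq |f(0)|+\|f'\|_0|v|$ (implied by Assumption \ref{ass:f}), and the stochastic part via Walsh's theory combined with the Burkholder--Davis--Gundy (BDG) inequality:
\begin{equation*}
    \mathbb{E}\!\left[\left|\int_0^t e^{-\alpha(t-s)}\!\!\int_0^1 w(y,x)\frac{\sqrt{f(V_s(y))}}{\sqrt{n}} W(ds,dy)\right|^p\right] \leq C_p\, \mathbb{E}\!\left[\left(\int_0^t\!\!\int_0^1 w(y,x)^2 f(V_s(y))\, dy\, ds\right)^{p/2}\right].
\end{equation*}
A further Jensen inside the expectation (valid since $p/2\geq 1$) together with the sublinearity of $f$ yields a bound in terms of $G^{p/2}$, producing a Gronwall-type inequality
\begin{equation*}
    G^p(t) \leq C_T\Bigl(1 + \int_0^t G^{p/2}(s)\, ds + \int_0^t G^p(s)\, ds\Bigr).
\end{equation*}
Starting from the $L^2$ bound of Theorem \ref{thm:existence:uniqueness:snfe} and iterating through $p=2,4,8,\dots$, one obtains the bound for every dyadic $p$, and Hölder's inequality fills in the remaining exponents. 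The a priori finiteness of $G^p(t)$ needed for Gronwall is secured by first running the argument on the Picard iterates $V^{(k)}$ of \eqref{eq:picard:iteration:snfe}, uniformly in $k$.

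For the Hölder moment estimates, fix $p$ large, $s\leq t\leq T$ and $x\in[0,1]$, and decompose $V_t(x)-V_s(x)$ into the initial difference $(e^{-\alpha t}-e^{-\alpha s})u_0(x)$, two ``old-kernel'' terms obtained by integrating $e^{-\alpha(t-u)}-e^{-\alpha(s-u)}$ over $[0,s]$ against the drift and the noise, and two ``new-kernel'' terms integrating $e^{-\alpha(t-u)}$ over $[s,t]$. The bounds $|e^{-\alpha t}-e^{-\alpha s}|\leq \alpha|t-s|$ together with Jensen give $\mathcal{O}(|t-s|^p)$ in $L^p$ for every term except the new stochastic one, which by BDG contributes $\mathcal{O}(|t-s|^{p/2})$ (the quadratic variation being integrated over a window of length $|t-s|$); invoking \eqref{eq:Lp:control:snfe} to bound $\mathbb{E}[f(V_u(y))^{p/2}]$ uniformly yields $\mathbb{E}[|V_t(x)-V_s(x)|^p]\leq C_{p,T}|t-s|^{p/2}$. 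For the space increment at fixed $t$, the analogous decomposition with $w(y,x)-w(y,x')$ and $u_0(x)-u_0(x')$ in place of the time-kernel differences gives, by smoothness of $w$ and $u_0$ (Assumption \ref{ass:f}), $\mathbb{E}[|V_t(x)-V_t(x')|^p]\leq C_{p,T}|x-x'|^p$. By the triangle inequality,
\begin{equation*}
    \mathbb{E}\left[|V_t(x)-V_s(x')|^p\right]\leq C_{p,T}\bigl(|t-s|^{p/2}+|x-x'|^p\bigr),
\end{equation*}
and an anisotropic Kolmogorov--Čentsov criterion (see e.g.\ \cite{Dalang2009AMO,walsh1986introduction}) yields a modification of $V$ which is jointly $(\eta_1,\eta_2)$-Hölder continuous for any $\eta_1<1/2-2/p$ and $\eta_2<1-2/p$. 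Letting $p\to\infty$ gives the announced exponents.

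The main subtlety is the $L^p$ estimate: since $f$ is only sublinear rather than bounded, BDG does not return a self-contained Gronwall inequality for $G^p$ but leaves a remainder in $G^{p/2}$, forcing the dyadic bootstrap from the $L^2$ bound and a careful pre-estimation on the Picard iterates. Once \eqref{eq:Lp:control:snfe} is available, the Hölder-increment computations are a routine adaptation of the $L^2$ arguments already used in the proofs of Theorem \ref{thm:existence:uniqueness:snfe} and Proposition \ref{prop:approximation:snfe}.
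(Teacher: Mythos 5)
Your proof is correct and follows essentially the same strategy as the paper: $L^p$ moment bounds via BDG/Jensen and Gronwall, followed by increment moment estimates and Kolmogorov's continuity theorem. The one genuine divergence is in how the Gronwall loop is closed for the $L^p$ bound. You observe that BDG yields a term in $G^{p/2}$ and therefore set up a dyadic bootstrap from $p=2$, which works but is heavier than necessary. The paper avoids the bootstrap by exploiting the standing hypothesis $f\geq m>0$ (which you already have from the assumptions of Theorem~\ref{thm:existence:uniqueness:snfe}): it makes $\sqrt{f}$ globally Lipschitz, so $f(v)^{p/2}=(\sqrt{f(v)})^{\,p}\leq (\sqrt{f(0)}+\tfrac{\|f'\|_0}{2\sqrt{m}}|v|)^p$ and the BDG term feeds back into $G^p$ itself; Gronwall then closes in one pass at each $p$. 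Your route has the small advantage of not using the lower bound of $f$ at this stage, but since that lower bound is assumed anyway, the paper's argument is the more economical one. Your Hölder-increment computation and the anisotropic Kolmogorov application (with $p$ sent to infinity to reach $\eta_1<1/2$, $\eta_2<1$) mirror the paper's sketch, including the use of the $L^p$ bound to control $\mathbb{E}[f(V_u(y))^{p/2}]$ in the new-window stochastic term and the Lipschitz regularity of $u_0$ and $w$ for the spatial increment.
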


\begin{proof}
Using Jensen and Burkhölder inequalities in a similar way as in the proof of Theorem \ref{thm:existence:uniqueness:snfe}, one can get, from \eqref{def:solution_SNFE}, the following inequality
\begin{equation*}
    \E\left[ |V_t(x)|^p \right] \leq 3^{p-1}\left[ |u_0(x)|^p + ||w||_0^p ||f'||_0^p \left( t^{p-1} + c_p t^{p/2-1} \right) \int_0^t \E\left[ \left( \sup_x |V_s(x)| + f(0) \right)^p \right] ds \right],
\end{equation*}
and so there is a constant $C_t$ such that $H_p(t) := \sup_{x\in [0,1]} \mathbb{E}\left[ |V_t(x)|^p \right]$ satisfies
\begin{equation*}
    H_p(t)\leq C_t \left( 1+ \sup_{x\in [0,1]} u_0(x)^p + \int_0^t H_p(s) ds \right).
\end{equation*}
Since $u_0$ is bounded, Gronwall's lemma applied to the last inequality gives \eqref{eq:Lp:control:snfe}.

For the time regularity, the same kind of computations gives for any $p$ and times $0\leq s\leq t$, the existence of a constant $C_t$ such that
\begin{equation*}
    \E\left[ |V_t(x) - V_{t'}(x)|^p \right] \leq C_t \left( 1 + H_p(t) \right) (t-t')^{p/2}.
\end{equation*}
Finally, Kolmogorov's continuity theorem gives the stated regularity (remember that $p$ can be as large as one needs).

For the spatial regularity, let us write $V_t(x) := I_1(t,x) + I_2(t,x) + I_3(t,x)$ as given in Definition \ref{def:solution:snfe} by 
\begin{equation*}
    \begin{cases}
    I_1(t,x) := e^{-t} V_0(x),\\
    I_2(t,x) := \int_{0}^{t} e^{-(t-s)}\int_{0}^1 w(y,x) f(V_s(y))dyds ,\\
    I_3(t,x) := \int_{0}^{t} e^{-(t-s)} \int_{0}^1 w(y,x) \frac{\sqrt{f(V_s(y))}}{\sqrt{n}} W(ds,dy).
    \end{cases}
\end{equation*}
For all $x$ and $x'$ in $[0,1]$, one can show that for any $p\geq 2$, and $j=1,2,3$,
\begin{equation}
    \E \left[ |I_j(t,x) - I_j(t,x')|^p\right]\leq C_t |x-x'|^p.
\end{equation}
Instead of giving the full computations, we only give the arguments here (once again, similar computations can be found in \cite[Theorem 3.10]{faugeras2015stochastic}. For $j=1$, it is a direct consequence of the Lipschitz continuity of $u_0$.  For $j=2$ and $3$, it comes from the Lipschitz continuity of $w$.

Once again, the stated regularity is obtained thanks to Kolmogorov's continuity theorem.
\end{proof}

\subsection{Approximation result}

We start this section discussing the well-posedness of both processes $(\Gamma_t(x))_{t,x}$ and $(M_t(x))_{t,x}.$  

\begin{proposition}
\label{prop:well_posedness_Gamma_M_as_function}
Fix $T>0$ and assume assumptions of Theorem \ref{thm:existence:uniqueness:snfe}.
\begin{enumerate}
    \item The process $(M_t(x))_{t\leq T,x\in [0,1]}$ given by \eqref{def:martingale_as_function}
is well-defined in $L^2$. Moreover, there is a modification of $(M_t(x))_{t,x}$ such that $(t,x)\mapsto M_t(x)$ is $(\eta_1,\eta_2)$-Hölder continuous for any $\eta_1<1/2$ and $\eta_2<1$. If we define for such a modification  
$$
M_t(\varphi)=\int_0^1\varphi(x)M_t(x)dx, \ \mbox{for} \ \varphi\in\cS \ \mbox{and} \ t\leq T,
$$
then $M=(M_t)_{t\leq T}$ is a $\cS'$-valued centered Gaussian process with covariance function defined in \eqref{eq:covariance:of:M}.
\item There exists a unique solution $(\Gamma_t(x))_{t\leq T,x\in [0,1]}$ of \eqref{def:limit_process_as_function} such that for all $T>0$,
\begin{equation*}
    \sup_{t\leq T, x\in [0,1]} \mathbb{E}\left[ |\Gamma_t(x)|^2 \right] <+\infty.
\end{equation*}
Moreover, the solution is bounded in $L^p$ for any $p\geq 2$,
$$
\sup_{t\leq T}\sup_{x\in [0,1]}\E(|\Gamma_t(x)|^p)<\infty,
$$
and there exists a modification such that $(t,x)\mapsto \Gamma_t(x)$ is $(\eta_1,\eta_2)$-Hölder continuous for any $\eta_1<1/2$ and $\eta_2<1$. 
Furthermore, if we define for such a modification
$$
\Gamma_t(\varphi)=\int_0^1\varphi(x)\Gamma_t(x)dx, \ \mbox{for} \ \varphi\in\cS \ \mbox{and} \ t\leq T,
$$
then the resulting $\cS'$-valued process $(\Gamma_t)_{t\leq T}$ is the unique solution of \eqref{def:limit_process}.
\end{enumerate}
\end{proposition}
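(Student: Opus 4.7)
\emph{Part 1 (the Gaussian field $M$).} Part~1 reduces to a Walsh stochastic integral of a deterministic integrand. The integrand $h(s,y;x) := e^{\alpha s} w(y,x) \sqrt{f(u(s,y))}$ appearing in \eqref{def:martingale_as_function} is deterministic and uniformly bounded on $[0,T]\times [0,1]^2$: $u$ is bounded by Proposition~\ref{prop:contraction:NFE}, $f$ is continuous (hence bounded on the range of $u$), and $w$ is smooth. Thus $M_t(x)$ is a well-defined centered Gaussian random variable in $L^2$, with covariance
\[ \E[M_{t_1}(x_1) M_{t_2}(x_2)] = \int_0^{t_1\wedge t_2} \int_0^1 e^{2\alpha s} w(y,x_1) w(y,x_2) f(u(s,y)) \, dy \, ds \]
by Walsh's isometry. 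Since Gaussian higher moments are controlled by the second moment, a BDG-type bound combined with the Lipschitz continuity of $w(y,\cdot)$ yields
\[ \E|M_t(x) - M_{t'}(x')|^p \leq C_{p,T}(|t-t'|^{p/2} + |x-x'|^p), \]
and Kolmogorov's continuity theorem furnishes a modification that is $(\eta_1,\eta_2)$-Hölder with $\eta_1<1/2$ and $\eta_2<1$. Setting $M_t(\varphi) := \int_0^1 \varphi(x) M_t(x) \, dx$ defines a Gaussian element of $\cS'$ for each $t$, and Fubini on the covariance recovers \eqref{eq:covariance:of:M}.

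\emph{Part 2 (the field $\Gamma$).} Equation \eqref{def:limit_process_as_function} is a linear Volterra-type equation driven by $M$, with bounded kernel (using $\|w\|_0,\|f'\|_0<\infty$ and boundedness of $u$ on $[0,T]\times[0,1]$). A Picard iteration initialized at $\Gamma^{(0)}_t(x) := e^{-\alpha t} M_t(x)$ converges in $\sup_{t\leq T,x\in[0,1]} \E|\cdot|^2$ to a solution; pathwise uniqueness follows from Gronwall applied to $H(t) := \sup_x \E|\Delta_t(x)|^2$, for which the equation immediately gives $H(t)\leq C_T \int_0^t H(s) \, ds$. Since $M$ has finite moments of all orders by Part~1, the same iteration delivers the $L^p$ bound for any $p\geq 2$. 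Writing $\Gamma_t(x) = e^{-\alpha t} M_t(x) + \Psi_t(x)$, the integral term $\Psi$ is Lipschitz in $t$ and $x$ thanks to the smoothness of $w$ and $u$ together with the $L^p$ bound on $\Gamma$, so $\Gamma$ inherits the Hölder regularity of $M$ on a suitable modification.

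\emph{Identification with the $\cS'$-valued solution.} Multiplying \eqref{def:limit_process_as_function} by $\varphi(x)$, integrating in $x$, and applying Fubini yields
\[ \Gamma_t(\varphi) = e^{-\alpha t} M_t(\varphi) + \int_0^t e^{-\alpha(t-s)} \Gamma_s\!\left(\int_0^1 \varphi(x) w(\cdot, x) f'(u(s,\cdot)) \, dx \right) ds, \]
which is exactly \eqref{def:limit_process}, and uniqueness of such $\cS'$-valued solutions is Proposition~\ref{prop:pathwise:uniqueness}. I expect the bivariate Kolmogorov estimate for $M$ to be the main technical point: the Lipschitz control of $w$ in its second variable must be paired with a BDG-type bound in time in such a way as to achieve separate, near-optimal Hölder exponents in $t$ and $x$, rather than merely a joint exponent.
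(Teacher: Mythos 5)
Your proposal is correct and follows essentially the same route as the paper, which itself argues tersely by reference to Walsh's theory (via \cite{faugeras2015stochastic}), a Fubini--Tonelli for stochastic integrals, and the earlier Picard/Gronwall and Kolmogorov arguments of Theorems~\ref{thm:existence:uniqueness:snfe} and~\ref{thm:regularity:snfe}; you fill in the same ingredients in more detail. One small imprecision worth tightening: the correction term $\Psi_t(x) = \int_0^t e^{-\alpha(t-s)}\int_0^1 w(y,x) f'(u(s,y))\Gamma_s(y)\,dy\,ds$ is not pathwise Lipschitz in $(t,x)$ since $\Gamma_s(y)$ is random and only $L^p$-bounded; what the smoothness of $w$, $u$ and the $L^p$ control actually give is a Lipschitz-type moment bound $\E|\Psi_t(x)-\Psi_{t'}(x')|^p \leq C(|t-t'|^p + |x-x'|^p)$, from which Kolmogorov produces a Hölder modification of $\Psi$ (with exponents at least as good as those of $M$), and then $\Gamma = e^{-\alpha t}M + \Psi$ inherits the $(\eta_1,\eta_2)$-Hölder regularity with $\eta_1<1/2$, $\eta_2<1$ as stated.
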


\begin{proof}
By Proposition \ref{prop:contraction:NFE}, we have that $\sup_{t\leq T}\|u(t,\cdot)\|_0<\infty$. Using this fact and recalling that $f$ is Lipschitz continuous and $w$ is bounded, one can easily deduce that for any $t\leq T$ and $x\in [0,1],$
$$
\int_{0}^t\int_0^1e^{2\alpha s}w^2(y,x)f(u(s,y))dsdy<\infty.
$$
Then, by \cite[Theorem 3.1]{faugeras2015stochastic} the process $M_t(x)$ given by \eqref{def:martingale_as_function} is well-defined for any $t\leq T$ and $x\in [0,1].$ The proof of the regularity properties of $(M_t(x))_{t,x}$ is omitted here since it follows along the same lines as in the proof of Theorem \ref{thm:regularity:snfe}.

Now, fix $t\leq T$ and take $\varphi\in\cS$. Under assumptions of Theorem \ref{thm:existence:uniqueness:snfe} we can apply Fubini–Tonelli property for martingale integrals (see for instance \cite[Theorem 5.30]{Dalang2009AMO}) to deduce that
$$
M_t(\varphi)=\int_{0}^1\varphi(x)M_t(x)dx=\int_0^te^{\alpha s}\int_{0}^1 I(\varphi)(y)\sqrt{f(u(s,y))}W(dsdy),
$$
where $y\mapsto I(\varphi)(y)$ is defined in \eqref{eq:covariance:of:M}. By using the properties of the white noise $W$ and \cite[Theorem 3.1]{faugeras2015stochastic}, one can easily check that $M$ is indeed a $\cS'$-valued centered Gaussian process with covariance function defined in \eqref{eq:covariance:of:M}.

It remains to prove Item 2. 
The proof of existence and uniqueness of solutions of \eqref{def:limit_process_as_function} 
is similar to that of Theorem       \ref{thm:existence:uniqueness:snfe}, whereas the boundedness in $L^p$ and the regularity properties of $(\Gamma_t(x))_{t,x}$ follow along the same lines as in the proof of Theorem  \ref{thm:regularity:snfe}. For this reason the details are not given here. 
To conclude the proof of the theorem, fix $t\leq T$, take $\varphi\in S$ and observe that by the definition of $\Gamma_t(x)$ we have
$$
\Gamma_t(\varphi)=e^{-\alpha t}M_t(\varphi)+\int_{0}^1\varphi(x)\int_{0}^t
e^{-\alpha (t-s)}\int_0^1w(y,x)f'(u(s,y))\Gamma_s(y)dydsdx.$$
By Fubini Theorem, we can interchange the  order of integration of the the term in the RHS of the equation above to deduce that
$$
\Gamma_t(\varphi)=e^{-\alpha t}M_t(\varphi)+\int_{0}^t
e^{-\alpha (t-s)}\int_0^1\left(\int_{0}^1\varphi(x)w(y,x)f'(u(s,y))dx\right)\Gamma_s(y)dyds.$$
Since for any $s\geq 0$,
$$
\Gamma_s\left(\int_{0}^1\varphi(x)w(\cdot,x)f'(u(s,\cdot))dx\right)=\int_{0}^1\left(\int_{0}^1\varphi(x)w(y,x)f'(u(s,y))dx\right)\Gamma_s(y)dy,
$$
we have that $\Gamma_t(\varphi)$ solves \eqref{def:limit_process} and the result follows from uniqueness of Proposition \ref{prop:pathwise:uniqueness}.
\end{proof}

We are now ready to state and prove the approximation result: that is the control of $D^n$ defined by, for all $t\in [0,T]$ and $x\in [0,1]$,
\begin{equation}
    D^n(t,x) := Y^n_t(x) - V^n_t(x)
\end{equation}
where $Y^n_t(x)$ and $V^n_t(x)$ are respectively given by \eqref{eq:heuristics:Yn} and \eqref{def:solution_SNFE}.

\begin{thm}\label{thm:approximation:snfe}
Under assumptions of Theorem \ref{thm:existence:uniqueness:snfe}, for any $T>0$ there exists a constant $C=C(T,w,f',f^{(2)},\alpha)$ such that for all $n\geq 1$,
$$
\sup_{0\leq t\leq T}\sup_{x\in [0,1]}\E((D^n(t,x))^2)\leq \frac{C}{n^{2}}. 
$$
\end{thm}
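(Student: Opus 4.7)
The plan is to subtract the integral representations of $Y^n$ and $V^n$ and to control the resulting deterministic and stochastic remainders separately. Using the identity $u_t + n^{-1/2}\Gamma_t = Y^n_t$ in the pointwise form of equations \eqref{eq:system:u:lambda} and \eqref{def:limit_process_as_function}, together with \eqref{def:martingale_as_function} for $M$, I obtain the representation \eqref{eq:heuristics:Yn} for $Y^n$. Subtracting \eqref{def:solution_SNFE} from it gives $D^n(t,x) = A^n(t,x) + B^n(t,x)$, where
\begin{equation*}
A^n(t,x) = \int_0^t e^{-\alpha(t-s)}\!\int_0^1 w(y,x)\Bigl[ f(u(s,y)) + n^{-1/2} f'(u(s,y))\Gamma_s(y) - f(V^n_s(y)) \Bigr] dy\, ds,
\end{equation*}
\begin{equation*}
B^n(t,x) = \frac{1}{\sqrt n}\int_0^t e^{-\alpha(t-s)}\!\int_0^1 w(y,x)\bigl[ \sqrt{f(u(s,y))} - \sqrt{f(V^n_s(y))} \bigr] W(ds,dy).
\end{equation*}

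For $A^n$, I add and subtract $f(Y^n_s(y))$ in the bracket. The part $f(Y^n_s)-f(V^n_s)$ is controlled by $\|f'\|_0 |D^n(s,y)|$, so by Jensen it contributes at most $C\int_0^t G^n(s)\,ds$ to $\E[A^n(t,x)^2]$, where $G^n(s) := \sup_{x\in[0,1]} \E[D^n(s,x)^2]$. The Taylor remainder $f(u) + n^{-1/2}f'(u)\Gamma_s - f(u+n^{-1/2}\Gamma_s)$ is bounded in absolute value by $\frac{\|f''\|_0}{2n}\Gamma_s(y)^2$ thanks to Assumption~\ref{ass:f}, and so its $L^2$ contribution is $O(n^{-2})$ thanks to the $L^4$ bound on $\Gamma_s(y)$ given by Proposition~\ref{prop:well_posedness_Gamma_M_as_function}. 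For $B^n$, the Walsh isometry yields
\begin{equation*}
\E[B^n(t,x)^2] = \frac{1}{n}\int_0^t e^{-2\alpha(t-s)}\!\int_0^1 w(y,x)^2\, \E\!\left[ \bigl(\sqrt{f(u(s,y))} - \sqrt{f(V^n_s(y))}\bigr)^2 \right] dy\, ds.
\end{equation*}
Since $f\geq m>0$, $\sqrt{\cdot}$ is Lipschitz on $[m,\infty)$, so $\sqrt{f}$ is Lipschitz with constant bounded by $\|f'\|_0/(2\sqrt m)$. Combining this with Proposition~\ref{prop:approximation:snfe} gives
$\E[(\sqrt{f(u(s,y))}-\sqrt{f(V^n_s(y))})^2] \leq C\, \E[(u(s,y)-V^n_s(y))^2] \leq C/n$, hence $\E[B^n(t,x)^2] \leq C/n^2$.

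Putting the two bounds together, I obtain $G^n(t) \leq C n^{-2} + C \int_0^t G^n(s)\, ds$ uniformly in $t\in[0,T]$, and Gronwall's lemma concludes. The main obstacle to keep in mind is the stochastic part: a naive decomposition inserting $\sqrt{f(Y^n_s)}$ would produce a Gronwall-type term with prefactor $1/n$, yielding only a $1/n$ bound. The critical gain of an extra factor $1/n$ comes from applying Proposition~\ref{prop:approximation:snfe} directly inside $\E[B^n(t,x)^2]$, which leverages the fact that $V^n$ is already known to be an $O(n^{-1/2})$ approximation of $u$. The deterministic part is more routine, but it relies essentially on the second-order Taylor expansion (hence on $\|f^{(2)}\|_0<\infty$ from Assumption~\ref{ass:f}) together with the existence of moments of $\Gamma$ of order $\geq 4$.
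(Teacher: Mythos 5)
Your proposal is correct and follows essentially the same route as the paper: the same decomposition of the bracket via second-order Taylor expansion around $u$ with the $L^4$ bound on $\Gamma$, the same use of $\sqrt{f}$'s Lipschitz property (from $f\geq m$) combined with Proposition~\ref{prop:approximation:snfe} to make the stochastic term $O(n^{-2})$, and Gronwall to close. One small inaccuracy in your side remark: inserting $\sqrt{f(Y^n_s)}$ instead would \emph{also} yield the $n^{-2}$ rate, since the resulting Gronwall kernel carries a prefactor $O(n^{-1})$ and the forcing term $\frac{1}{n}\cdot\frac{1}{n}\E[\Gamma^2]$ is already $O(n^{-2})$; the direct use of Proposition~\ref{prop:approximation:snfe} is cleaner but not strictly necessary for the extra factor of $n^{-1}$.
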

\begin{proof}
By Jensen and Burkhölder inequalities it follows that
\begin{multline}
\label{Thm_4:ineq_1}
\E(D^n(t,x)^2)\leq \frac{2\|w\|^2_0}{n}\int_{0}^te^{-2\alpha(t-s)}\int_{0}^1\E\left(\left(\sqrt{f(u(s,y))}-\sqrt{f(V^n_s(y))}\right)^2\right)dyds\\+t\int_{0}^t e^{-2\alpha(t-s)}\int_{0}^1w^{2}(y,x)\E\left(\left|f(u(s,y))+f'(u(s,y))n^{-1/2}\Gamma_s(y)-f(V^n_s(y))\right|^2\right)dyds.  
\end{multline}
By applying Taylor approximation of order 2 and then the inequality $(a+b)^2 \leq 2(a^2+b^2)$, we obtain that
\begin{multline*}
\E\left(\left|f(u(s,y))+f'(u(s,y))n^{-1/2}\Gamma_s(y)-f(V^n_s(y))\right|^2\right)\leq 2\|f'\|^2_0\E((D^n(s,y))^2)\\+\frac{\|f^{(2)}\|_0}{2n^2}\E(\Gamma^4_s(y)).  
\end{multline*}
Now, by using first that $f$ is lower bounded by $m$ and then the fact that $f$ is Lipschitz, it follows that
$$
\E\left(\left(\sqrt{f(u(s,y))}-\sqrt{f(V^n_s(y))}\right)^2\right)\leq \frac{\|f'\|^2_0}{4m}\E\left(\left|u(s,y)-V^{n}_s(y)\right|^2\right). 
$$
Combining these last two inequalities with inequality \eqref{Thm_4:ineq_1} yields
\begin{multline*}
\E(D^n(t,x))^2\leq 
\frac{\|w\|^2_0\|f'\|_0}{2nm}\int_{0}^te^{-2\alpha(t-s)}\int_{0}^1\E\left(\left|u(s,y)-V^{n}_s(y)\right| \right)dyds\\
+2t\|w\|^2_0\|f'\|_0^2\int_{0}^t e^{-2\alpha(t-s)}\int_{0}^1\E((D^n(s,y))^2)dyds\\+
\frac{t\|w\|^2_0\|f^{(2)}\|_0^2}{2n^2}\int_{0}^t e^{-2\alpha(t-s)}\int_{0}^1\E(\Gamma^4_s(y))dyds.
\end{multline*}
Let $H(t)=\sup_{x\in [0,1]}\E((D^n(t,x))^2)$. By Proposition \ref{prop:approximation:snfe} and Theorem \ref{prop:well_posedness_Gamma_M_as_function}, it follows that for all $t\leq T$,
$$
H(t)\leq \frac{C_1}{n^{2}}+C_2\int_{0}^t H(s)ds,
$$
for positive constants $C_1$ and $C_2$ depending only on $T,w,f',f^{(2)}$ and $\alpha$. 
Proposition \ref{prop:contraction:NFE}
together with Theorems \ref{thm:existence:uniqueness:snfe} and \ref{prop:well_posedness_Gamma_M_as_function} imply that the function $t\mapsto H(t)$ is locally bounded, so that the result follows from Gronwall inequality.
\end{proof}

\section{Acknowledgments}

This research has been conducted as part of FAPESP project {\em Research, Innovation and
Dissemination Center for Neuromathematics} (grant 2013/07699-0). We also acknowledge the support of CNRS under the grant PEPS JCJC \emph{MaNHawkes}.

\newpage

\appendix
\section{Lemmas}
\label{App:lemmas}

\begin{lemma}
\label{lem:Ck:bound}
Let $f$ and $g$ be in $\cC^k_b(\bR,\bR)$. Then $fg$ and $f\circ g$ are in $\cC^k_b(\bR,\bR)$ and there exists $C>0$ such that
\begin{equation*}
    ||fg||_k \leq C ||f||_k ||g||_k \text{ and } ||f\circ g||_k \leq C ||f||_k (1+||g||_{k}^{k}).
\end{equation*}
\end{lemma}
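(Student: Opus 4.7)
The plan is to prove both bounds by direct computation of higher-order derivatives: Leibniz's rule for the product and Faà di Bruno's formula for the composition, together with the trivial bound $\|h^{(j)}\|_0 \leq \|h\|_k$ whenever $j \leq k$.

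For the product bound, I will apply the general Leibniz rule
\begin{equation*}
    (fg)^{(i)} = \sum_{j=0}^{i} \binom{i}{j} f^{(j)} g^{(i-j)},
\end{equation*}
which gives $\|(fg)^{(i)}\|_0 \leq \sum_{j=0}^{i} \binom{i}{j} \|f^{(j)}\|_0 \|g^{(i-j)}\|_0 \leq 2^i \|f\|_k \|g\|_k$ for every $i \leq k$. Summing from $i=0$ to $k$ yields $\|fg\|_k \leq (\sum_{i=0}^k 2^i)\|f\|_k \|g\|_k = C_k \|f\|_k \|g\|_k$ with $C_k = 2^{k+1}-1$. Boundedness of $fg$ and of all its derivatives up to order $k$ is then immediate, so $fg \in \cC^k_b$.

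For the composition bound, I will invoke Faà di Bruno's formula in its partition form,
\begin{equation*}
    (f \circ g)^{(i)}(x) \;=\; \sum_{\pi \in \mathcal{P}(i)} f^{(|\pi|)}(g(x)) \, \prod_{B \in \pi} g^{(|B|)}(x),
\end{equation*}
where $\mathcal{P}(i)$ is the (finite) set of partitions of $\{1,\dots,i\}$. For a fixed $i \leq k$, each block size $|B|$ lies in $\{1,\dots,i\} \subset \{1,\dots,k\}$, so $\|g^{(|B|)}\|_0 \leq \|g\|_k$, and the number of blocks $|\pi|$ satisfies $1 \leq |\pi| \leq i$, hence $\|f^{(|\pi|)}\|_0 \leq \|f\|_k$. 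Therefore
\begin{equation*}
    \|(f \circ g)^{(i)}\|_0 \;\leq\; \sum_{\pi \in \mathcal{P}(i)} \|f\|_k \, \|g\|_k^{|\pi|} \;\leq\; |\mathcal{P}(i)| \, \|f\|_k \, \bigl(1 + \|g\|_k^i\bigr).
\end{equation*}
Summing from $i=0$ to $k$ and using $\|g\|_k^i \leq 1 + \|g\|_k^k$ gives $\|f \circ g\|_k \leq C \|f\|_k (1 + \|g\|_k^k)$ with $C$ depending only on $k$ through the Bell numbers $|\mathcal{P}(i)|$. The membership $f\circ g \in \cC^k_b$ follows from the same estimate.

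I do not expect any real obstacle: both formulas are classical and $k$ is fixed, so all combinatorial prefactors are absorbed into the single constant $C$. The only mild subtlety is to separate the case $i=0$ (where $\|f\circ g\|_0 \leq \|f\|_0 \leq \|f\|_k$, fitting the bound) from $i\geq 1$, and to observe that the factor $\|g\|_k^{|\pi|}$ with $|\pi| \leq i \leq k$ is harmlessly bounded by $1 + \|g\|_k^k$.
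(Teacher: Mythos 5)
Your proposal is correct. For the product bound you use the general Leibniz rule, which is exactly the paper's approach. For the composition bound, however, you take a genuinely different route: you apply Faà di Bruno's formula directly to write each derivative $(f\circ g)^{(i)}$ as a finite sum over set partitions, then bound each summand by $\|f\|_k\,\|g\|_k^{|\pi|}$ and absorb the Bell-number counts into the constant. The paper instead proceeds by induction on $k$, using the chain rule identity $(f\circ g)' = g'\cdot(f'\circ g)$ to obtain the recursion $\|f\circ g\|_k \leq \|f\|_0 + \|g'\,(f'\circ g)\|_{k-1}$ and then closing the induction with the already-established product estimate. The two methods are both classical and both valid here. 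Your Faà di Bruno argument is more explicit and self-contained (no induction, a single closed formula), at the cost of invoking a heavier combinatorial identity; the paper's inductive argument is lighter on machinery and reuses the product bound as a lemma, which is tidier given that both estimates appear in the same statement. You correctly isolate the $i=0$ case and correctly observe that $\|g\|_k^{|\pi|} \leq 1 + \|g\|_k^k$ since $1\le|\pi|\le i\le k$, so there is no gap.
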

\begin{proof}
From Leibniz rule, it is clear that $||fg||_k \leq C ||f||_k ||g||_k$. For the second statement, one can proceed by induction on $k$ using $||f\circ g||_k \leq ||f||_0 + ||g'\, f'\circ g||_{k-1}$ and the first statement.
\end{proof}

\begin{lemma}
\label{lemma:truncation}
For $T>0$ and integers $n,k\geq 1$, let $\tau_k=\inf\{0\leq t\leq T : \max_{i\in [n]}|\eta^i_t|\geq k\}$.
If $f$ is Lipschitz continuous, both $u_0$ and $w_0$ are bounded, and $u(t,x)\in \cC([0,T],\cC[0,1])$, then $\tau_k\to T$ almost surely as $k\to\infty$.
\end{lemma}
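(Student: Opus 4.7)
The plan is to show that, almost surely, the trajectory $t\mapsto \max_{i\in[n]}|\eta^i_t|$ is bounded on $[0,T]$. Once this is established, for every $\omega$ in the full-measure event and every $k$ strictly larger than the (finite) $\omega$-dependent bound, the set defining $\tau_k(\omega)$ is empty, so $\tau_k(\omega)=T$ (with the natural convention $\inf\emptyset=T$ when restricting to $[0,T]$, as used elsewhere in the paper). Since $k\mapsto \tau_k$ is non-decreasing, this gives $\tau_k\to T$ almost surely.

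The first step is to bound $U^i_t$. Using the explicit expression \eqref{def:membrane_potential}, boundedness of $u_0$ and $w$, and the trivial estimate $\int_0^t e^{-\alpha(t-s)}dN^j_s \leq N^j_T$ for $t\leq T$, we get
\begin{equation*}
\sup_{t\leq T}\max_{i\in[n]} |U^i_t|\;\leq\; \|u_0\|_0 + \|w\|_0\,\max_{j\in[n]} N^j_T.
\end{equation*}
Under the Lipschitz assumption on $f$ and boundedness of $u_0$ and $w$, the Hawkes process $(N^1,\ldots,N^n)$ is well-defined and locally finite; in particular $\E[N^j_T]<\infty$ (for instance by \cite[Proposition 3]{chevallier2018spatial}, already invoked in the proof of Proposition \ref{Prop:upper_bound_eta_it_square}), so $N^j_T<\infty$ almost surely for each $j\in[n]$ and hence $\max_{j\in[n]} N^j_T<\infty$ a.s.

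The second step is immediate: since $u\in \cC([0,T],\cC([0,1]))$, the quantity $M_T:=\sup_{t\leq T,\,x\in[0,1]}|u(t,x)|$ is finite, and therefore
\begin{equation*}
\sup_{t\leq T}\max_{i\in[n]}|\eta^i_t| \;=\; n^{1/2}\sup_{t\leq T}\max_{i\in[n]}|U^i_t-u(t,x_i)| \;\leq\; n^{1/2}\bigl(\|u_0\|_0+\|w\|_0\max_{j\in[n]}N^j_T+M_T\bigr),
\end{equation*}
which is finite almost surely. Call this bound $K(\omega)$. For every $k>K(\omega)$, the set $\{0\leq t\leq T:\max_{i\in[n]}|\eta^i_t(\omega)|\geq k\}$ is empty, hence $\tau_k(\omega)=T$, and we conclude $\tau_k\to T$ a.s.

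No serious obstacle is expected: the argument reduces to the a.s.\ finiteness of $N^j_T$, which is built into the standing definition of the Hawkes process under the Lipschitz assumption on $f$. The only mildly delicate point is the convention $\inf\emptyset=T$ for the stopping time $\tau_k$, which is the one consistent with the use made of $\tau_k$ in Section \ref{sec:estimates}.
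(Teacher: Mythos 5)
Your proof is correct and rests on the same key estimate as the paper's: bounding $|U^i_t|$ by $\|u_0\|_0+\|w\|_0\cdot(\text{a functional of } N^j_T)$ together with the boundedness of $u$ on $[0,T]\times[0,1]$. The only (minor) difference is that you conclude pathwise — the a.s.\ finite bound $K(\omega)$ directly forces $\tau_k(\omega)=T$ for $k>K(\omega)$ — whereas the paper passes through $\E\bigl[\sup_{t\le T}\max_i|\eta^i_t|\bigr]<\infty$ and Markov's inequality; both routes use the same ingredients and your version is if anything slightly more direct.
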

\begin{proof}
By Markov inequality, we have
\begin{align*}
\P(\tau_k<t)&=\P(\sup_{s\leq t}\max_{i\in [n]}|\eta^i_s|\geq k)\\
&\leq \frac{1}{k}\E(\sup_{s\leq t}\max_{i\in [n]}|\eta^i_s|)\\
&\leq \frac{1}{k}\E(\sup_{s\leq T}\max_{i\in [n]}|\eta^i_s|)
\end{align*}
For each $s\geq 0$ and $i\in [n]$, since $\eta^i_s=\sqrt{n}(U^i_s-u(s,x_i))$ and  by assumption $M_T=\sup_{s\leq T} \|u(s,\cdot)\|_0<\infty$, it follows that
\begin{align*}
|\eta^i_s|\leq \sqrt{n}(|U^i_s|+M_T).    
\end{align*}
Now, one can check that for all $s\leq T$ and $i\in [n]$,
$$
|U^i_s|\leq \|u_0\|_0+\|w\|_0\frac{1}{n}\sum_{j=1}^nN^j_T,
$$
so that \cite[Proposition 3]{chevallier2018spatial} ensures that
$$
\E\left(\sup_{s\leq T}\max_{i\in [n]}|U^i_s|\right)\leq \|u_0\|_0+\|w\|_0\frac{1}{n}\sum_{j=1}^n\E(N^j_T)<\infty.
$$
Collecting the estimates above, we deduce that for some positive constant $C=C(T,w,u,n)$, it holds
$\E\left(\sup_{t\leq T}\max_{i\in [n]}|\eta^i_t|\right)<C$, implying if $\tau=\lim_k \tau_k$, then for all $t\leq T,$
$$
\P(\tau<t)=\lim_{k\to\infty}\P(\tau_k<t)=0,
$$
and the result follows.
\end{proof}

\section{Fréchet spaces}
\label{app:frechet}

Here are gathered some technical definitions and results about semi-normed spaces and Fréchet spaces in particular. Most of what appears here is taken from \cite{simon2017banach}.

In the following, let $E$ denote a separated semi-normed space equipped with the family of semi-norms $\{N_\nu, \nu \in \cN_E\}$ and $e$ denote a generic element of $E$. The family is said to be \emph{filtering} if for all finite subset $N\subset \cN_E$, there exists $\mu$ in $\cN_E$ such that, for all $e\in E$,
\begin{equation*}
    \sup_{\nu\in N} ||u||_\nu \leq ||u||_\mu.
\end{equation*}
The dual space $E'$ is the space of continuous linear forms $\xi$ on $E$. If the family of norms is filtering then there is a simple characterization of $E'$:
\begin{equation*}
    \xi\in E' \quad \Leftrightarrow \quad \exists \nu\in \cN_E,\, c>0,\ \sup_{e\in E} \frac{|\xi(e)|}{||e||_\nu} \leq c.
\end{equation*}

\begin{definition}
Any $B\subset E$ is called bounded if for every $\nu\in \cN_E$, $\sup_{e\in B} ||e||_\nu <+\infty$.
\end{definition}
In this paper, we endow $E'$ with the \emph{strong topology} defined by the family of semi-norms indexed by the bounded sets $B$ of $E$,
\begin{equation*}
    ||\xi||_{B} := \sup_{e\in B} |\xi(e)|.
\end{equation*}
Hence, $\xi_n\to \xi$ in $E'$ is equivalent to $||\xi_n - \xi||_B \to 0$ for every bounded set $B$. In particular, $\xi_n \to \xi$ implies $\xi_n(e) - \xi(e)$ for all $e$ in $E$.

\begin{definition}
A Fréchet space is any sequentially complete metrizable semi-normed space.
\end{definition}
All through the paper, the Fréchet space of interest is $\cS$ equipped with its natural filtering family of semi-norms. Hence, for instance, the space $\cC([0,T],\cS')$ is understood as the space of continuous functions $\gamma$ from $[0,T]$ to $\cS'$ equipped with the strong topology. Its topology is given by the projective limit topology of $\{\xi\mapsto \sup_{0\leq t\leq T} ||\xi_t||_{B}, \, B \text{ bounded set of }E\}$. In particular, $\gamma\in \cC([0,T],\cS')$ implies $\gamma(\varphi)\in \cC([0,T],\bR)$ for all $\varphi$ in $\cS$.
The construction of $\cD([0,T],\cS')$ follows the same idea where the sup norm over $t\in [0,T]$ is replaced by Skorokhod metric.

\newpage

\bibliographystyle{abbrv}
\bibliography{references}

\end{document}